\documentclass[12pt,reqno,a4paper]{amsart}

\usepackage{mathtools,color, mathabx}
\usepackage{bbold}

\usepackage[colorlinks=true, linkcolor=blue, urlcolor=blue, citecolor=blue, pagebackref=true]{hyperref}

\usepackage[abbrev,bibtex-style]{amsrefs}

\usepackage{amssymb}

\usepackage{amsfonts}

\numberwithin{equation}{section} \theoremstyle{plain}
\newtheorem{theorem}{Theorem}
\newtheorem{proposition}[theorem]{Proposition}
\newtheorem{lemma}[theorem]{Lemma}
\newtheorem{corollary}[theorem]{Corollary}
\newtheorem{definition}[theorem]{Definition}

\theoremstyle{definition}
\newtheorem{remark}[theorem]{Remark}

\renewcommand{\leq}{\leqslant}
\renewcommand{\geq}{\geqslant}

\newcommand\E{\mathbb{E}}

\newcommand\Z{\mathbb{Z}}
\newcommand\R{\mathbb{R}}

\newcommand\C{\mathbb{C}}

\newcommand\F{\mathbb{F}}
\newcommand\Q{\mathbb{Q}}

\newcommand\Alt{\operatorname{Alt}}

\newcommand\supp{\operatorname{supp}}

\newcommand\tr{\operatorname{tr}}

\renewcommand\P{\mathbb{P}}

\newcommand\Res{\operatorname{Res}}
\newcommand\GCD{\operatorname{GCD}}
\newcommand\Gal{\operatorname{Gal}}
\newcommand\Frob{\operatorname{Frob}}
\newcommand\Sym{\operatorname{Sym}}

\renewcommand\Re{\operatorname{Re}}
\renewcommand\Im{\operatorname{Im}}


\renewcommand{\a}{\alpha}
\renewcommand{\b}{\beta}
\renewcommand{\d}{\delta}

\newcommand{\e}{\varepsilon}
\newcommand{\f}{\varphi}
\newcommand{\g}{\gamma}

\newcommand{\s}{\sigma}



\newcommand{\wh}{\widehat}
\newcommand{\wt}{\widetilde}

\newcommand{\cP}{\mathcal{P}}

\newcommand{\cA}{\mathcal{A}}
\newcommand{\cO}{\mathcal{O}}
\newcommand{\cQ}{\mathcal{Q}}
\newcommand{\cE}{\mathcal{E}}

\newcommand{\fp}{\mathfrak{p}}

\DeclareMathOperator{\Err}{Err}

\newcommand\eps{\varepsilon}


\begin{document}

\title[Irreducibility of random polynomials]{Irreducibility of random polynomials of large degree}

\author{Emmanuel Breuillard}
\address{Centre for Mathematical Sciences\\
Wilberforce Road\\
Cambridge CB3 0WA\\
UK }
\email{breuillard@maths.cam.ac.uk}

\author{P\'eter P. Varj\'u}
\address{Centre for Mathematical Sciences\\
Wilberforce Road\\
Cambridge CB3 0WA\\
UK }
\email{pv270@dpmms.cam.ac.uk}

\thanks{EB acknowledges support from ERC Grant no. 617129 `GeTeMo';
PV acknowledges support from the Royal Society.}

\keywords{random polynomials, irreducibility, Riemann hypothesis, Dedekind zeta function, Markov chains}

\subjclass[2010]{11C08 (primary) and 11M41, 60J10 (secondary)}

\begin{abstract}
We consider random polynomials with independent identically distributed
coefficients with a fixed law.
Assuming the Riemann hypothesis for Dedekind zeta functions, we prove that
such polynomials are irreducible and their Galois groups contain the alternating
group with high probability as the degree goes to infinity.
This settles a conjecture of Odlyzko and Poonen conditionally on RH for Dedekind zeta
functions. 
\end{abstract}

\maketitle

\section{Introduction}\label{sc:intro}

Let
\begin{equation}\label{eq:model}
P(x)=x^d+A_{d-1}x^{d-1}+\ldots+A_1x+1\in\Z[x]
\end{equation}
be a random polynomial
with independent coefficients $A_{1},\ldots, A_{d-1}$ taking values
in $0$ and $1$ with equal probability.
Odlyzko and Poonen \cite{OP-01}
conjectured that the probability that $P$ is irreducible in $\Z[x]$
converges to $1$ as $d\to \infty$.

The best known lower bound in this problem is due to Konyagin \cite{Kon-01}
who proved that
\[
\P(\text{$P$ is irreducible})>\frac{c}{\log d}
\]
for an absolute constant $c>0$.

A strongly related problem was studied by Bary-Soroker and Kozma \cite{BSK-210}, who proved that
\[
\P(\text{$x^d+A_{d-1}x^{d-1}+\ldots+A_1x+A_0$ is irreducible})\to 1,
\]
where $A_0,\ldots, A_{d-1}$ are independent random integers uniformly distributed in ${1,\ldots, L}$
for a fixed integer $L$ that has at least $4$ distinct prime divisors.

In another paper, Bary-Soroker and Kozma \cite{BSK-bivariate} studied the problem for bivariate polynomials.
See also \cite{ORW-low-degree} for a study of the probability that a random polynomial has low degree factors,
and \cite{BBBSWW} for computational experiments on related problems.

In this paper we prove the following result.
\begin{theorem}\label{th:main}
Let $P$ be a random polynomial as in \eqref{eq:model}.
Suppose that the Riemann hypothesis holds for the Dedekind zeta function $\zeta_K$
for all number fields of the form $K=\Q(a)$, where $a$ is a root of a polynomial with $0,1$
coefficients.

Then 
\[
\P(\text{$P$ is irreducible in $\Z[x]$})\to 1
\]
as $d\to\infty$.
\end{theorem}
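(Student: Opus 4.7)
My plan is to bound, for each $1\le k\le d/2$, the probability that $P$ has an irreducible factor of degree $k$, and to sum these bounds. The first step exploits Mahler measure: by Landau's inequality $M(P)\le\sqrt{d+1}$, and since Mahler measure is multiplicative, any hypothetical factor $Q\in\Z[x]$ of degree $k$ satisfies $M(Q)\le\sqrt{d+1}$. Consequently every root $\a$ of $Q$ lies in the annulus $1/\sqrt{d+1}\le|\a|\le\sqrt{d+1}$, and the discriminant of the number field $K=\Q(\a)$ obeys $\log|\mathrm{disc}(K)|=O(k\log d)$. This confines the possible roots to a well-controlled family of pairs $(K,\a)$.

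For each such candidate $\a$, the condition $Q\mid P$ is equivalent to $P(\a)=0$. I would translate this into a modular anti-concentration estimate as follows. Under the Riemann hypothesis for $\zeta_K$, the Lagarias--Odlyzko effective Chebotarev density theorem produces a rational prime $p=p(\a)$ of size polynomial in $k$ and $\log d$ that splits completely in $K$; the error term $O(\sqrt{X}\log(X|\mathrm{disc}(K)|))$ in effective Chebotarev is small enough for this precisely because $\log|\mathrm{disc}(K)|$ was controlled in the previous step. Reducing modulo a prime of $K$ above $p$ gives an element $\overline\a\in\F_p$, and the constraint $P(\a)=0$ specialises to $\overline{P}(\overline\a)=0$ in $\F_p$. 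Since the $A_i$ are independent and uniform in $\{0,1\}\subset\F_p$, a Fourier-analytic anti-concentration bound on the linear form $\sum A_i\overline\a^i\pmod p$ yields that this specialised equation has probability $O(1/p)$, provided $\overline\a$ has sufficiently large multiplicative order in $\F_p^{\times}$; the latter condition can be arranged by a further selection among the primes supplied by effective Chebotarev.

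A union bound over a candidate set of pairs $(K,\a)$ of size polynomial in $d$---estimated using enumeration bounds for number fields of bounded discriminant and for integral elements of bounded height in such fields---then closes the argument for $k$ not too large, say $k\le \log d/\log\log d$. For larger $k$ the number of candidate factors grows as $\exp(O(k^2))$, so no polynomially-sized search suffices and I would switch strategy: fix a small prime $p$ (for instance $p=2$, where $\overline P$ is exactly uniform on monic polynomials in $\F_2[x]$ with constant term $1$) and analyse the factorisation type of $\overline P$ via the cycle-type distribution on $S_d$, again comparing observed $\Frob_p$ statistics to those predicted for a hypothetical factorisation using the Riemann hypothesis for the relevant $\zeta_K$.

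The principal obstacle, as I see it, is to join the two regimes coherently and handle the intermediate range where $k$ is comparable to $\log d$. There each of the three estimates---the discriminant bound for $\Q(\a)$, the effective Chebotarev density theorem, and the mod-$p$ anti-concentration---operates close to its threshold of applicability, so a delicate optimisation is required. An additional subtlety is to rule out Lehmer-type roots $\a$ of Mahler measure very close to $1$, which would correspond to genuinely exceptional factorisations that generic counting cannot exclude; this is where uniformity in $\a$ of the Riemann-hypothesis-based discriminant control becomes essential.
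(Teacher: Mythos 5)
There is a genuine gap, and it sits at the quantitative heart of your small-$k$ regime. You propose to union-bound over candidate roots $\a$ (equivalently, candidate irreducible factors $Q$ of degree $k$ with $M(Q)\le\sqrt{d+1}$), bounding each event $P(\a)=0$ by $O(1/p)$ for a prime $p$ of size \emph{polynomial in $k$ and $\log d$}. But the number of candidates is at least of order $\exp(ck\log d)$ (already about $\sqrt{d}$ linear factors $x-a$ with $|a|\le\sqrt{d+1}$ when $k=1$), so the union bound exceeds $1$ the moment the number of candidates exceeds $p$ --- and it must, since many distinct $\a$ reduce to the same residue $\overline\a\in\F_p$, making the corresponding events literally identical rather than approximately disjoint. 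To make a union bound close you would need $p\gg\exp(C(k\log d+k^2))$, at which point you are forced into exactly the regime the paper works in (one large prime $p\approx\exp(d/(\log d)^{3+\e})$, mixing of the walk $\sum A_j\overline\a^{\,j}$ over that many steps, and admissibility of $\overline\a$). The paper never union-bounds over candidate factors: it computes the first and second moments of the \emph{number of distinct irreducible factors} of $P$, by combining Weinberger's observation (the prime ideal theorem for $\zeta_{\Q(a)}$ under RH identifies this count with the average number of roots of $P$ in $\F_p$) with the random-walk equidistribution $\P(P(\overline\a)=0)\approx 1/p$ summed over admissible $\overline\a\in\F_p$; Chebyshev's inequality then gives exactly one irreducible factor with high probability, with all degrees $k$ handled simultaneously. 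A secondary but real issue in the same step: you ask for $p$ to \emph{split completely} in $\Q(\a)$, which requires Chebotarev in the Galois closure, whose discriminant is exponentially larger than that of $\Q(\a)$ --- the paper explicitly warns that this destroys the RH error term; only a degree-one prime is needed, and that is supplied by the prime ideal theorem for $\zeta_{\Q(\a)}$ itself.

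The large-$k$ regime is also not a proof. The factorization type of $\overline P$ modulo a single prime such as $2$ records the cycle type of one Frobenius element acting on the roots, and no single cycle type can certify irreducibility (a product of two irreducibles of coprime degrees mod $2$ is consistent with $P$ factoring over $\Z$ in many ways). Bary-Soroker and Kozma need at least four primes at which the reduction is uniform precisely to run such an argument, and for $0,1$ coefficients the reduction is uniform only at $p=2$, which is exactly the excluded case. Finally, even if both regimes worked, you would still need the paper's separate arguments for factors of small Mahler measure (your ``Lehmer-type roots,'' handled in the paper via Dobrowolski's bound, the Dubickas--Konyagin count, and Lemma \ref{lm:indi2}) and for the cyclotomic factors, which genuinely do divide $P$ with probability only polynomially small and must be split off from $\wt P$ rather than excluded.
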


See Section \ref{sc:results} for more precise results, where we discuss the following finer aspects of the problem
\begin{itemize}
\item random polynomials with arbitrary i.i.d.~ coefficients,
\item the rate at which the probability converges to $1$,
\item relaxation of the assumption of RH,
\item Galois groups. 
\end{itemize}

\subsection{Motivation}\label{sc:motivation}

Beyond its intrinsic interest, the problem of irreducibility of random polynomials of high degree is motivated by some other problems, which we now briefly discuss.

It is believed to be computationally difficult to determine the prime factorization
of integers.
On the other hand, polynomial time algorithms are known for computing the
factorization of polynomials in $\Z[x]$.
Given an integer $N\in\Z_{>0}$, we can write it as $N=P(2)$ for a unique
polynomial $P$ with $0,1$-coefficients.
By computing the factorization of $P$ in $\Z[x]$ and evaluating the factors at $2$,
we can obtain a factorization of $N$.

The only weakness of this approach is that the polynomial $P$ may be irreducible
and thus the factorization of $N$ obtained may be trivial.
The problem we study in this paper thus asks for the probability that this procedure returns only a trivial factorization.
Therefore, it is desirable to have results, such as those of this paper, proving that this probability converges to $1$ very fast.

We will discuss our method in Section
\ref{sc:outline}. The method links the problem of irreducibility of random polynomials with mixing times
of certain Markov chains, which are$\mod p$ analogues of the Bernoulli convolutions we had studied in earlier work (see e.g. \cites{breuillard-varju0,varju,breuillard-varju}).
In this paper, we use results available for the Markov chains to study random polynomials,
but this can be reversed. In particular, in a forthcoming paper, we will use the results of this paper to obtain new results about the Markov chains.

Our results on irreducibility assume the Riemann hypothesis for Dedekind zeta functions, or at least some information on the zeros. In our last theorem, Theorem \ref{th:exp-zero}, we show that conversely irreducibility of random polynomials has (modest) implications about the zeros of Dedekind zeta functions.

\subsection{Results}\label{sc:results}

Under the full force of the Riemann hypothesis, our best result is the following.

\begin{theorem}\label{th:GRH}
Let $P=A_dx^d+\ldots+A_1x+A_0\in\Z[x]$ be a random polynomial with independent coefficients.
Assume that $A_1,\ldots, A_{d-1}$ are identically distributed with common law $\mu$.
Assume further that all coefficients are bounded by $\exp(d^{1/10})$ almost surely.
Let $\tau>0$ be a number such that $\|\mu\|_2^2:=\sum_{x \in \Z}\mu(x)^2<1-\tau$.

There are absolute (and effective) constants $c,C>0$ such that if $d \ge C/\tau^4$, then
with probability at least $1-\exp(-c\tau d^{1/2}/\log d)$ the following holds for $P$.
\begin{enumerate}
\item
If RH holds for $\zeta_K$ for $K=\Q$ and for all number fields of the form $K=\Q(a)$, where $a$ is a root of $P$,
then $P=\Phi \wt P$, where  $\wt P$ is irreducible, and
$\Phi$ has $\deg \Phi \leq \frac{C}{\tau} \sqrt{d}$ and is a product of cyclotomic polynomials and $x^m$ for some $m\in\Z_{\ge 0}$.
\end{enumerate}
Moreover with probability at least $1-\exp(-c\tau d^{1/2}/(\log d)^2)$ the following additional property holds for $P$.
\begin{enumerate}
\setcounter{enumi}{1}
\item
If RH holds for $\zeta_K$ for all number fields of the form $K=\Q(a_1,\ldots,a_m)$, where $a_1,\ldots,a_m$
are any number of roots of $P$, then $\Gal(\wt P)\supset \Alt(\deg \wt P)$.
\end{enumerate}
\end{theorem}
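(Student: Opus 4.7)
The plan is to bound both bad events --- the existence of an unwanted small irreducible factor, and $\Gal(\tilde P)\not\supset\Alt$ --- by combining a non-concentration / $L^2$-flattening estimate for $P(\alpha)\bmod\fp$ (with randomness in the coefficients of $P$) with the supply of favourable primes $\fp$ produced by effective Chebotarev under the stated GRH hypotheses. The underlying observation is that for any algebraic integer $\alpha$ and any prime ideal $\fp$ of $\Z[\alpha]$, the residue $P(\alpha)\bmod\fp$ is the terminal value of the affine Markov chain $X_{j+1}=\alpha X_j+A_{d-1-j}$ on $\F_\fp$; under $\|\mu\|_2^2\le 1-\tau$ each step contracts $L^2$-densities, and if $\alpha$ has large multiplicative order modulo $\fp$ this contraction sweeps through all Fourier modes, so the walk is close to uniform after $O(\log N(\fp)/\tau)$ steps, making $\P(P(\alpha)=0)$ essentially $1/N(\fp)$.

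For part (1), set $k_0:=C\sqrt d/\tau$. I would first rule out non-cyclotomic, non-monomial irreducible factors of $P$ of degree $\le k_0$: for each candidate algebraic integer $\alpha$ of degree $\le k_0$ that is neither $0$ nor a root of unity, RH for $\zeta_{\Q(\alpha)}$ and a Lagarias--Odlyzko style effective Chebotarev bound furnish primes $\fp\subset\Z[\alpha]$ of norm $\le(\log d)^{O(1)}$ at which $\alpha$ has near-full multiplicative order in $\F_\fp^*$; applying the mixing estimate at such $\fp$ and taking a union bound over the controllable list of candidate $\alpha$ of bounded degree and height produces the tail $\exp(-c\tau\sqrt d/\log d)$. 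To promote this to full irreducibility of $\tilde P$, I would rule out that the non-cyclotomic part splits into two factors each of degree $>k_0$: such a splitting is incompatible, via effective Chebotarev, with the existence of a single prime $p$ at which $P\bmod p$ has an irreducible factor of degree $>d/2$, and the RH inputs produce such a $p$ with high probability. For part (2), once $\tilde P$ is known irreducible, I would apply RH for the splitting field of $\tilde P$ and use effective Chebotarev to convert prescribed factorization shapes of $\tilde P\bmod p$ into cycle types of Frobenius in $\Gal(\tilde P)$; the mixing estimate, applied now to tuples $(\alpha_1,\ldots,\alpha_m)$ of roots in the compositum $\Q(\alpha_1,\ldots,\alpha_m)$, controls the probability that exceptional factorization patterns occur, and exhibiting primes realising, say, an $n$-cycle together with a $p$-cycle for some prime $p\in(n/2,n-2)$ --- forcing $\Alt(n)\subset\Gal(\tilde P)$ via Jordan's theorem --- closes part (2).

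The main obstacle is the mixing / $L^2$-flattening statement for the affine walk $x\mapsto\alpha x+A$ on $\F_\fp$ when $\alpha$ has large multiplicative order: proving equidistribution in roughly $\log N(\fp)/\tau$ steps, uniformly in $\fp$ and $\alpha$, with the sole input $\|\mu\|_2^2\le 1-\tau$ on the law. This mixing rate fixes both the degree threshold $\sqrt d/\tau$ and the $(\log d)^{-1}$ and $(\log d)^{-2}$ factors appearing in the probability exponents; the allowance of coefficients as large as $\exp(d^{1/10})$ costs another $(\log d)^{O(1)}$ via a larger union bound over candidate roots $\alpha$. By contrast, the analytic-number-theoretic and combinatorial ingredients --- effective Chebotarev under GRH, density of primes of small norm, Jordan-type and primitivity theorems for permutation groups --- play a supporting role once the mixing estimate is in place.
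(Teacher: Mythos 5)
Your overall philosophy---view $P(\alpha)\bmod\fp$ as the endpoint of a random walk driven by the coefficients, prove mixing under $\|\mu\|_2^2\le 1-\tau$, and feed in primes supplied by RH---is indeed the engine of the paper. But the architecture you build around it has gaps that would sink the proof. First, the union bound over candidate roots $\alpha$ of degree at most $C\sqrt d/\tau$ cannot close: factors of $P$ can have Mahler measure up to $M(P)\le\exp(d^{1/10})(d+1)^{1/2}$, so the number of candidate minimal polynomials of degree $k_0\asymp\sqrt d/\tau$ is $\exp(\gg k_0 d^{1/10})$, whereas a single prime $\fp$ of norm $N$ yields at best $\P(P(\alpha)\equiv 0\bmod\fp)\approx N^{-1}$, with the mixing time forcing $\log N\lesssim d/\mathrm{polylog}(d)$; with your stated choice $N\le(\log d)^{O(1)}$ the per-candidate bound is only polylogarithmic. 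The paper avoids any such union bound over factors: it counts the number of distinct irreducible factors directly as $|\Omega/G|$, equating the weighted average over primes $p\in[e^{X/2},e^X]$ (with $X\asymp\tau\sqrt d/\log d$) of the number of roots of $P$ in $\F_p$ both with $|\Omega/G|$ (prime ideal theorem for the fields $\Q(a)$, Propositions \ref{pr:prime-sum-GRH} and \ref{pr:ApBp}) and with $1$ (equidistribution of the walk, Proposition \ref{pr:exp-root-2}), then applies Chebyshev. Union bounds are reserved for the two sparse families where they are affordable: small-Mahler-measure factors (Dubickas--Konyagin count, Section \ref{sc:Lehmer}) and products of cyclotomics (Boyd--Montgomery count).

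Second, both your step promoting ``no small factor'' to irreducibility and your part (2) require Chebotarev for the \emph{splitting field}: the existence of primes at which Frobenius has a prescribed long cycle, an $n$-cycle, or a $p$-cycle is a statement about the full cycle type, governed by the splitting field, whose discriminant is far too large for effective Chebotarev to help even under GRH (the paper makes exactly this point in Section \ref{sc:outline}), and whose zeta function is not covered by the theorem's hypotheses. The paper's design uses only \emph{fixed-point counts} of Frobenius acting on $m$-tuples of roots, which are controlled by the degree-$\le d^m$ fields $\Q(a_1,\ldots,a_m)$ appearing in the hypothesis; part (2) is obtained by showing $|\Omega^m/G|=B_m$ for $m\asymp(\log d)^2$, hence $m$-transitivity, and then quoting Bochert--Jordan, not by realizing specific cycle types. (Your reduction is also logically incomplete: a prime with an irreducible factor of degree $>d/2$ only forces one $\Z$-irreducible factor of degree $>d/2$, not irreducibility of $\wt P$.) Third, nothing in your scheme rules out $P=\Phi Q^k$ with $k\ge2$: root counts, fixed-point counts and low-degree-factor exclusions all see only \emph{distinct} irreducible factors, and the paper needs the separate Section \ref{sc:powers} (showing $P(2)$ is not a $k$-th power modulo many primes $q\equiv1\bmod k$) to handle this. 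Finally, a smaller point: ``$\alpha$ has large multiplicative order mod $\fp$'' is not a sufficient hypothesis for the known mixing arguments; one needs $\alpha$ not to satisfy any low-degree integer relation of small Mahler measure (the paper's admissibility), and the resulting mixing time carries extra $(\log\log p)$ factors, which is where the $(\log d)^{-1}$ and $(\log d)^{-2}$ in the exponents actually come from.
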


Here $\Alt(n)$ denotes the alternating group on $n$ elements, $\Gal(P)$ the Galois group of the splitting field of the polynomial $P$.

There are several remarks in order regarding this theorem.
It is natural to allow that the probability laws of $A_d$ and $A_0$ differ from those of the other coefficients, for example to include
the original problem discussed in the beginning of the paper.
The exponent $\frac{1}{10}$ has no particular significance and the upper bound $\exp(d^{1/10})$ on the coefficients could be relaxed at the expense of some technical complications
in the proof, but we do not pursue this.
Nevertheless, the method of proof definitely requires some upper bound in terms of $d$; it would be interesting to
know if this is also necessary for the theorem to hold.

Our method is based on studying higher order transitivity of the Galois group acting on the roots, and hence it cannot distinguish
between the Galois group being $\Sym(d)$ or $\Alt(d)$.
Deciding whether or not the Galois group is $\Sym(d)$ with probability tending to $1$ appears to be a hard problem.

There are certain obstructions to the irreducibility of $P$ that occur with  probability higher than the estimate
$2\exp(-cd^{1/2}/\log d)$ given in the theorem.
In particular, if $\P(A_0=0)$ is positive, then $x|P$ with positive probability.
Moreover, if $\omega$ is a root of unity, then one may think of $P(\omega)$ as the end point of a random walk on
$\Z[\omega]$ whose steps are given by $A_j\omega^j$ for $j=0,\ldots, d$.
If we fix $\omega\neq 1$ and $\mu$, then for large values of $d$, $\P(P(\omega)=0)$ is proportional to $d^{-r/2}$ (say by the lattice local limit theorem \cite[\S 49, Chap. 9]{GneKol}), where
$r$ is the rank of the lattice $\Z[\omega]$.

In summary, the factor $\Phi$ may be non-trivial with probability higher than $2\exp(-cd^{1/2}/\log d)$,
and its precise behavior can be described by a detailed analysis of random walks on lattices, which we do not pursue here, except for the:

\begin{corollary}\label{cr:00} Let $\mu$ be a probability measure on $\Z$ with finite second moment, which is not supported on a singleton. Let $N$ be a positive integer and $U_N$ be the finite subset of $\C$ consisting of $0$ and all of roots unity $\omega$ with $[\Q(\omega):\Q]<N$.  Let $(A_i)_{i \ge 0}$ be a sequence of i.i.d. random variables with common law $\mu$ and set $P_d=A_dx^d+\ldots+A_1x+A_0\in\Z[x]$. Then, assuming the Riemann hypothesis for Dedekind zeta functions of number fields, as $d\to +\infty$
\[
\P(\text{$P_d$ is irreducible in $\Q[x]$})=1-\P(\exists \omega \in U_N, P_d(\omega)=0) +O_{\mu,N}(d^{-\frac{N}{2}}).
\]
\end{corollary}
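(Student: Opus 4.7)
The plan is to combine the structural result of Theorem~\ref{th:GRH}(1) with the lattice local limit theorem already cited in the paper. I would set $\tau := (1 - \|\mu\|_2^2)/2$, which is positive since $\mu$ is not supported on a singleton. Because $\mu$ has finite second moment, Markov's inequality and a union bound give $\P(\max_{0 \leq j \leq d}|A_j| > \exp(d^{1/10})) = O_\mu(d \exp(-2 d^{1/10})) = o(d^{-N/2})$, so outside a negligible event the hypotheses of Theorem~\ref{th:GRH} are met. Applying part (1) of that theorem under the assumed RH then yields, outside a further event of probability $\exp(-c\tau\sqrt{d}/\log d) = o(d^{-N/2})$, a factorisation $P_d = \Phi_d \tilde P_d$ with $\tilde P_d$ irreducible and $\Phi_d$ a product of a power of $x$ with cyclotomic polynomials, of total degree at most $C\sqrt d/\tau$. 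For $d$ large enough that $\deg\tilde P_d \geq 1$ and $U_N \subseteq V_d$, where
\[
V_d := \{0\} \cup \{\omega : \omega \text{ is a root of unity with } [\Q(\omega):\Q] \leq C\sqrt d/\tau\},
\]
on this event $P_d$ is reducible in $\Q[x]$ if and only if it has a root in $V_d$.

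It then remains to show $\P(\exists\,\omega \in V_d \setminus U_N : P_d(\omega) = 0) = O_{\mu,N}(d^{-N/2})$. For each such $\omega$, writing $r = [\Q(\omega):\Q] \geq N$, the sum $P_d(\omega) = \sum_{j=0}^d A_j \omega^j$ is a walk with independent increments in the rank-$r$ lattice $\Z[\omega] \subset \C$; regrouping the sum by $j$ modulo $\operatorname{ord}(\omega)$ and invoking the lattice local limit theorem cited in the paper gives $\P(P_d(\omega) = 0) \leq C(\mu,\omega) d^{-r/2}$. A union bound, organised by $r$, then yields
\[
\P(\exists\,\omega \in V_d \setminus U_N : P_d(\omega) = 0) \leq \sum_{r \geq N} K_r(\mu)\, d^{-r/2},
\]
with $K_r(\mu) = \sum_{\omega : [\Q(\omega):\Q]=r} C(\mu,\omega)$. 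The number of roots of unity of exact degree $r$ is $O(r^2 \log\log r)$ (using the bound $\varphi(n) \gg n/\log\log n$ to control the set of $n$ with $\varphi(n)=r$), and a quantitative form of the local limit theorem lets one take $C(\mu,\omega)$ polynomial in $r$ as well, so the series is geometric in $d^{-1/2}$ and sums to $O_{\mu,N}(d^{-N/2})$. Combining this with the reduction in the first paragraph gives
\[
\P(P_d \text{ is irreducible in } \Q[x]) = 1 - \P(\exists\,\omega \in U_N : P_d(\omega) = 0) + O_{\mu,N}(d^{-N/2}),
\]
as claimed.

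The main obstacle I anticipate is the uniform quantitative control of the constants $C(\mu,\omega)$ in the local limit theorem, required to make the union bound genuinely summable. Concretely, one needs a quantitative lattice local limit theorem for $\sum_{j=0}^d A_j \omega^j$, showing that its covariance on $\Z[\omega] \simeq \Z^r$ is non-degenerate at a rate that is at most polynomial in $r = [\Q(\omega):\Q]$; this in turn reduces to the observation that as $j$ ranges over $\{0,\ldots,d\}$, each residue class modulo $\operatorname{ord}(\omega)$ is visited $\Theta(d/\operatorname{ord}(\omega))$ times, producing a suitably spread Gaussian in the $r$ independent directions $1,\omega,\ldots,\omega^{r-1}$.
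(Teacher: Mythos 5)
Your reduction is the same as the paper's: truncate the coefficients, invoke Theorem~\ref{th:GRH}(1) under RH to write $P_d=\Phi_d\wt P_d$ with $\wt P_d$ irreducible and $\deg\Phi_d\le C\sqrt d/\tau$, and thereby reduce the corollary to showing that $P_d$ is unlikely to vanish at a root of unity of degree between $N$ and $C\sqrt d/\tau$. That part is fine. The gap is in the key estimate $\P(P_d(\omega)=0)\le C(\mu,\omega)d^{-r/2}$ and in the summability of your union bound. A genuinely $r$-dimensional lattice local limit theorem on $\Z[\omega]\cong\Z^r$, uniform over a family of walks whose dimension grows with $d$ (up to $r\asymp\sqrt d$), is not an off-the-shelf statement: standard LLTs have error terms that are asymptotic only for fixed dimension, and one must control both the covolume of the lattice generated by the increments and the determinant of the covariance form $\sum_j v_jv_j^{T}$, where $v_j$ is $\omega^j$ written in the basis $1,\dots,\omega^{r-1}$; your sketch addresses none of this. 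Moreover the claim that $C(\mu,\omega)$ is polynomial in $r$ is false as stated: grouping exponents modulo $n=\operatorname{ord}(\omega)$ gives each coordinate variance of order $d/n$ rather than $d$, so the natural bound is $(C(\mu)\,n/d)^{r/2}$, which carries a factor $n^{r/2}$, super-polynomial in $r$. (The union bound still closes with the correct constants, because $n\ll\sqrt d\log\log d$ in the relevant range forces $(Cn/d)^{r/2}\le(C'/\sqrt d)^{r/2}$, but that is a different argument from the one you give and requires splitting into ranges of $\varphi(n)$ as the paper does.)

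The paper avoids any multi-dimensional LLT. Its Lemma~\ref{lm:cycl-div} proves $\P[\omega_n\mid P_d]\le(C(\mu)n/d)^{\varphi(n)/2}$ by reducing $P_d$ modulo $x^n-1$ to $Q_d=\sum_{j<n}B_jx^j$ with $B_j=\sum_{i\equiv j\ (\mathrm{mod}\ n)}A_i$, and then applying the conditioning trick of Lemma~\ref{lm:indi1}: given $B_{\varphi(n)},\dots,B_{n-1}$, divisibility by the cyclotomic polynomial $\omega_n$ determines $B_0,\dots,B_{\varphi(n)-1}$, so the probability is at most $\prod_{j<\varphi(n)}\|\mu_j\|_\infty$, and each factor is at most $(C(\mu)n/d)^{1/2}$ by a one-dimensional local limit (or concentration-function) bound for a sum of about $d/n$ i.i.d.\ copies of $\mu$. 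This replaces the hard multi-dimensional input by $\varphi(n)$ independent one-dimensional ones. As written, the step you yourself flag as the main obstacle is unproved, and the quantitative claim you make about its constants is incorrect; substituting the paper's lemma for your LLT step would repair the argument.
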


In a similar flavor we answer the original problem posed at the beginning of the paper.

\begin{corollary}\label{cr:01}
Let
$P(x)=x^d+A_{d-1}x^{d-1}+\ldots+A_1x+1\in\Z[x]$
be a random polynomial
with independent coefficients $A_{1},\ldots, A_{d-1}$ taking values
in $0$ and $1$ with equal probability.
Suppose that the Riemann hypothesis holds for the Dedekind zeta function $\zeta_K$
for all number fields of the form $K=\Q(a)$, where $a$ is a root of a polynomial with $0,1$
coefficients.

Then 
\[
\P(\text{$P$ is irreducible in $\Z[x]$})=1-\sqrt{\frac{2}{\pi d}}+O(d^{-1}),
\]
where the implied constant is absolute.
\end{corollary}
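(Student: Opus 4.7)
The plan is to apply Theorem~\ref{th:GRH} to $\mu=\tfrac12(\delta_0+\delta_1)$, for which $\|\mu\|_2^2=\tfrac12$, so that any fixed $\tau<\tfrac12$ is admissible. The RH hypothesis of the corollary is exactly what part~(1) of that theorem requires. Off an event of probability $\exp(-c\sqrt d/\log d)=o(d^{-1})$, the theorem produces a factorisation $P=\Phi\widetilde P$ with $\widetilde P$ irreducible and $\Phi$ a product of cyclotomic polynomials and a power of $x$. Since $A_0=1$, the factor $x$ never divides $P$, so $\Phi$ is a pure product of cyclotomic polynomials, of total degree $\leq C\sqrt d$. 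Hence, up to an $o(d^{-1})$ error, $P$ is irreducible exactly when no root of unity is a root of $P$, and the task reduces to estimating $\P(\exists\,\omega\text{ a root of unity with }P(\omega)=0)$.

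Because the sum of the coefficients is $\geq 2$, the value $\omega=1$ is never a root. For $\omega=-1$ I would split $P(-1)$ according to the parity of the index, writing it (after a deterministic shift by $0$ or $2$, depending on the parity of $d$) as the difference of two independent binomial random variables of parameters close to $(d/2,\tfrac12)$. The Vandermonde convolution then collapses $\P(P(-1)=0)$ to a single central binomial coefficient divided by a power of $2$, and Stirling's formula yields
\[
\P(P(-1)=0)=\sqrt{\frac{2}{\pi d}}+O(d^{-3/2}).
\]

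For primitive $k$-th roots of unity $\omega$ with $k\geq 3$, the value $P(\omega)=1+\omega^d+\sum_{i=1}^{d-1}A_i\omega^i$ is the endpoint of a random walk on the lattice $\Z[\omega]$ of rank $\phi(k)\geq 2$, whose averaged one-step covariance is non-degenerate. A lattice local central limit theorem, as in \cite[\S 49, Chap.~9]{GneKol}, yields $\P(P(\omega)=0)=O(k^C d^{-\phi(k)/2})$ for some absolute constant $C$. The three values $k\in\{3,4,6\}$ with $\phi(k)=2$ contribute $O(d^{-1})$ in total. For $\phi(k)\geq 4$, summing over $k$ in the admissible range $\phi(k)\leq C'\sqrt d$ and using standard estimates on $\phi$ to control the multiplicity of each of its values, the tail contribution is $O(d^{-2})$. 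A union bound thus gives $\P(\exists k\geq 3:\Phi_k\mid P)=O(d^{-1})$.

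The main technical obstacle is obtaining the lattice local CLT with a bound in $k$ that is uniform enough to sum, though only polynomial dependence is needed because of the decay $d^{-\phi(k)/2}$ once $\phi(k)\geq 4$. Combining the three ingredients---the near-irreducibility from Theorem~\ref{th:GRH}, the sharp asymptotic for $\P(P(-1)=0)$, and the $O(d^{-1})$ bound on the probability of higher-order cyclotomic factors---and absorbing the negligible intersection probabilities into the error, one arrives at
\[
\P(P\text{ is irreducible})=1-\sqrt{\frac{2}{\pi d}}+O(d^{-1}).
\]
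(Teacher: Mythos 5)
Your proposal is correct in its overall architecture and matches the paper's strategy at the top level: invoke Theorem~\ref{th:GRH}(1) to reduce, off an event of probability $o(d^{-1})$, to the question of whether a cyclotomic polynomial divides $P$; observe that $x\nmid P$ and $P(1)>0$; extract the main term $\sqrt{2/(\pi d)}$ from a one-dimensional local limit theorem for $P(-1)$; and show the remaining cyclotomic factors contribute $O(d^{-1})$. Where you genuinely diverge is in the last step. You bound $\P(\Phi_k\mid P)$ for $k\ge 3$ by a $\phi(k)$-dimensional lattice local CLT for the walk $\sum A_i\omega^i$ on $\Z[\omega]$, and you correctly identify the uniformity in $k$ of that LCLT (non-degeneracy of the block covariance, periodicity of the increment lattice, size of the error term) as the technical work remaining. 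The paper sidesteps this entirely with Lemma~\ref{lm:cycl-div}: reducing the exponents mod $n$, one writes $Q_d=\sum_{j=0}^{n-1}B_jx^j$ with $B_j=\sum_{i\equiv j\,(n)}A_i$, notes $\omega_n\mid P$ forces $\omega_n\mid Q_d$, and then the conditional-probability divisibility bound of Lemma~\ref{lm:indi1} gives $\P[\omega_n\mid P]\le\prod_{j<\phi(n)}\|\mu_j\|_\infty$, where each $\mu_j$ is the law of a sum of $\approx d/n$ i.i.d.\ Bernoullis. This replaces one $\phi(k)$-dimensional LCLT by $\phi(n)$ independent one-dimensional ones, yielding $\P[\omega_n\mid P]\le(Cn/d)^{\phi(n)/2}$ with uniformity in $n$ for free; the subsequent summation over $n$ is then the same bookkeeping you describe. (A second, minor difference: you dispose of cyclotomic factors of degree $>C\sqrt d$ via the bound $\deg\Phi\le C\tau^{-1}\sqrt d$ from Theorem~\ref{th:GRH}, whereas the paper bounds that range directly by Lemma~\ref{lm:indi1}; both are fine.) If you want to complete your write-up without developing a uniform multidimensional LCLT, I recommend substituting the paper's reduction for that step.
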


Polynomials of small Mahler measure can also contribute to the error term. In this respect it is also worth pointing out that an exponential bound in the error term in Theorem \ref{th:GRH}, say of the form $\exp(-c d)$ for some $c>0$ would easily imply the Lehmer conjecture (arguing, say, as in \cite{breuillard-varju0}*{Lemma 16}).

In the proof of part (2) of Theorem \ref{th:GRH}, we will show that the Galois group of $\wt P$ acts $k$-transitively
on its roots with $k > (\log d)^2$. By a well-known fact going back to Bochert and Jordan in the 19-th century, this implies that the Galois group contains the alternating group.
In fact, now there are even better results available, which we will discuss in more details in Section \ref{sc:proofGRH}.
Using the classification of finite simple groups, it has been proved that all $6$-transitive permutation groups contain $\Alt(d)$.
However, if we were to rely on this, it would lead only to a very minor improvement in Theorem \ref{th:GRH}, so
we opted for a proof avoiding the classification.
Unfortunately, our method cannot distinguish between the symmetric and alternating groups.

Bary-Soroker and Kozma proved that if $\mu$ is the uniform distribution on an interval, then with probability tending
to $1$, the Galois group of $P$ contains $\Alt(d)$ provided $P$ is irreducible.
However, our result applies in greater generality and provides a better bound for the probability of exceptions
conditionally on the Riemann hypothesis.

Next, we state two results, where the reliance on the Riemann hypothesis is relaxed at the expense
of a weakening of the bound.

\begin{theorem}\label{th:mild-hyp1}
For any numbers $\tau>0$ and $\a>\b> 3$, there is $c>0$ such that the following holds.
Let $P=A_dx^d+\ldots+A_1x+A_0\in\Z[x]$ be a random polynomial with independent coefficients.
Assume that $A_1,\ldots, A_{d-1}$ are identically distributed with common law $\mu$.
Assume further that all coefficients are bounded by $d^{1/\tau}$ almost surely
and $\|\mu\|_2^2<1-\tau$.

Then with probability at least $1-2\exp(-c(\log d)^{\beta-2})$ the following holds for $P$.
Suppose $\zeta_K$ has no roots $\rho$ with $|1-\rho|<(\log d)^\a/d$ for all $K=\Q(a)$
for any roots $a$ of $P$.
Then $P=\Phi \wt P$, where  $\wt P$ is irreducible, $\deg \Phi \leq c \sqrt{d}$ and
$\Phi$ is a product of cyclotomic polynomials and $x^m$ for some $m\in\Z_{\ge 0}$.
\end{theorem}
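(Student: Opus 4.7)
The plan is to follow the strategy of the proof of Theorem \ref{th:GRH}(1), replacing the Riemann hypothesis by the quantitative zero-free region and accepting the corresponding degradation in the probability bound. The decomposition $P=\Phi\wt P$ is obtained in two stages: first one controls the degree of the cyclotomic/monomial factor $\Phi$, and then one forbids a second non-trivial non-cyclotomic, non-monomial irreducible factor from dividing $\wt P$.

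The bound $\deg\Phi\leq c\sqrt d$ does not really depend on the zeta-function hypothesis. For each root of unity $\omega$ and for $\omega=0$, the value $P(\omega)$ is the endpoint of a $d$-step random walk on the lattice $\Z[\omega]$ with i.i.d.\ steps of law determined by $\mu$, and a local limit theorem gives $\P(P(\omega)=0)=O(d^{-r/2})$ with $r=\rk \Z[\omega]$. Summing over all $\omega$ of degree up to a threshold of order $\sqrt d$ controls $\deg\Phi$ with probability of the required quality.

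The heart of the argument is to rule out an additional irreducible factor of $\wt P$ under the zero-free region hypothesis. I would argue by contradiction: suppose $\wt P$ has a proper non-trivial non-cyclotomic, non-monomial irreducible factor $Q$ with root $\a$; then the walk endpoint $S_d(\a)=\sum_i A_i\a^i$ in $\Z[\a]$ vanishes. The assumption that $\zeta_{\Q(\a)}$ has no zero with $|1-\rho|<(\log d)^\a/d$, fed into the explicit formula and an effective Chebotarev density theorem, produces a sufficiently large collection of rational primes $p$ that split in $\Q(\a)$ in a way that the reduction $(S_n(\a)\bmod\fp)_n$ becomes a rapidly mixing Markov chain on $\Z[\a]/\fp$. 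This is precisely the mod-$p$ analogue of the Bernoulli convolution problem referenced in the introduction; the spectral gap driving the mixing is controlled by the hypothesis $\|\mu\|_2^2<1-\tau$. The coefficient bound $|A_i|\leq d^{1/\tau}$ is what guarantees that the event $S_d(\a)=0$ in $\Z[\a]$ is faithfully detected by the mod-$\fp$ reduction in the relevant range of $p$, without integer cancellations being lost.

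The main obstacle, and the source of the exponent $(\log d)^{\b-2}$, is the interplay between the range of admissible primes and two opposing constraints on it: the zero-free region produces a useful explicit formula only once $\log p$ lies above a threshold determined by $\a$, while the Markov chain equidistributes within $d$ steps only for $\log p$ bounded by a fixed multiple of $\tau d$. Carefully optimizing the range $[X_1,X_2]$ of primes against these constraints, and absorbing the polylogarithmic losses both from the zero-sum in the explicit formula and from the union bound over candidate factors $Q$, is what forces the slack $\a>\b>3$ and yields the final bound. Once this is set up correctly, combining the prime-counting lower bound with the exponential $L^2$-mixing estimate for the mod-$\fp$ walk, and summing over the finitely many relevant $Q$, completes the proof along the same lines as Theorem \ref{th:GRH}(1).
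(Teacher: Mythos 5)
Your high-level plan---rerun the proof of Theorem \ref{th:GRH}(1) with the zero-free region in place of RH, pushing the range of primes up to $\log p\asymp d(\log d)^{-\b}$ and accepting the weaker probability bound---is indeed what the paper does (it takes $X=d(\log d)^{-\b}$ and substitutes Proposition \ref{pr:Bp-sum} for Proposition \ref{pr:Bp-sum-GRH}), and your diagnosis of where the exponent $\b-2$ comes from (mixing time versus the threshold needed for the explicit formula) is essentially right. However, the argument you describe for the main step has a structural flaw. The equidistribution results for the walks $\sum_j A_j a^j$ in $\F_p$ hold only \emph{on average} over the parameter $a$ ranging over admissible residues (Proposition \ref{pr:equi-dist}); they cannot be applied to the particular residue obtained by reducing a complex root of a hypothetical factor $Q$ modulo a prime ideal $\fp$, a parameter which in any case depends on the realization of $P$, so your by-contradiction conditioning is circular. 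The correct order of quantifiers is the reverse of yours: one first proves, by a second-moment computation over the randomness of $P$ alone, that $\sum_p B_P(p)\log(p)\,g(\log p)\approx 1$ with high probability; only then, for a fixed good realization of $P$ satisfying the zero-free hypothesis, does one compare this with the prime ideal theorem through the identity $B_P(p)=\sum_{O}A_{K_O}(p)$ (Proposition \ref{pr:ApBp}) to conclude $|\Omega/G|=1$. The zero-free region enters only in the prime-ideal count, not in producing primes for which the chain mixes.

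Two further steps are missing. Counting \emph{distinct} irreducible factors cannot distinguish $\wt P$ irreducible from $\wt P=Q^k$ with $k\ge2$, so a separate argument is needed to exclude proper powers; the paper does this with the large sieve applied to $P(2)$ (Proposition \ref{pr:powers}), and one also needs Proposition \ref{pr:small-Mahler} to ensure the inadmissible factors are genuinely cyclotomic. Finally, your bound on $\deg\Phi$ via a union bound over individual roots of unity of degree up to $\sqrt d$ only controls the event that a \emph{single} cyclotomic factor of large degree divides $P$; it does not exclude a product of many low-degree cyclotomic polynomials of total degree exceeding $c\sqrt d$. The paper instead bounds $\P(\Phi\mid P)\le\exp(-\tau\deg\Phi/2)$ for each fixed product $\Phi$ of cyclotomics (Lemma \ref{lm:indi1}) and sums against the Boyd--Montgomery count $\exp(C(\deg\Phi)^{1/2})$ of such products, which is where the $\sqrt d$ threshold actually comes from.
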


We recall the state of the art in our knowledge about the zeros of Dedekind zeta functions
near $1$ to motivate the next  result.
The Dedekind zeta function $\zeta_K$ has at most one zero $\rho$
with $|1-\rho|<4/\log\Delta_K$, where $\Delta_K$ is the discriminant of the number field $K$,
see \cite{Sta-Dedekind}*{Lemma 3}.
If such a zero exists, it must be real,
and we call it the exceptional zero of $\zeta_K$.
The constant $4$ has been improved, see \cite{Kad-Dedekind} for the latest results.
We note that in the setting of Theorems \ref{th:mild-hyp1} and \ref{th:mild-hyp2},
$\log \Delta_K\le Cd\log d$ for a constant $C$ depending only on $\tau$.

The bounds available for the exceptional zero are much weaker.
We know that $\zeta_K$ has no zeros $\rho$ with
\begin{equation}\label{stark-bound}
|1-\rho|<\frac{c}{d\cdot d!|\Delta_K|^{1/d}},
\end{equation}
where $d$ is the degree of $K$ and $c$ is an absolute constant,
see \cite{Sta-Dedekind}*{proof of Theorem 1'}.
However, conditionally on Artin's holomorphy conjecture for Artin L-functions, we know by \cite{Sta-Dedekind}*{Theorem 4}
that $\zeta_K$ has no zeros $\rho$ with
\[
|1-\rho|<\frac{c}{d\log\Delta_K}+\frac{c}{\Delta_K^{1/d}}.
\]

In the next result, we formulate our hypothesis on the zeros of Dedekind functions allowing for
an exceptional zero.

\begin{theorem}\label{th:mild-hyp2}
For any numbers $\tau>0$, $\a> 4$ and $\gamma>1$ such that $\a>2\gamma+2$, there is $c>0$ such that the following holds.
Let $P=A_dx^d+\ldots+A_1x+A_0\in\Z[x]$ be a random polynomial with independent coefficients.
Assume that $A_1,\ldots, A_{d-1}$ are identically distributed with common law $\mu$.
Assume further that all coefficients are bounded by $d^{1/\tau}$ almost surely
and $\|\mu\|_2^2<1-\tau$.

Then with probability at least $1-2\exp(-c(\log d)^{\a-\g-2})$ the following holds for $P$.
Suppose $\zeta_K$ has at most one root $\rho$ with $|1-\rho|<(\log d)^\a/d$ and none with
$|1-\rho|<\exp(-c(\log d)^{\gamma})$ for all $K=\Q(a)$
for any roots $a$ of $P$.
Then $P=\Phi \wt P$, where  $\wt P$ is irreducible, $\deg \Phi \leq c\sqrt{d}$ and
$\Phi$ is a product of cyclotomic polynomials and $x^m$ for some $m\in\Z_{\ge 0}$.
\end{theorem}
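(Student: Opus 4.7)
The plan is to carry out the argument of Theorem \ref{th:mild-hyp1} with the modifications needed to accommodate a single exceptional zero $\rho_0$ of $\zeta_K$ for each number field $K=\Q(a)$ attached to a root $a$ of $P$. The broad structure should be unchanged: one reduces the irreducibility-up-to-cyclotomic-factors statement to the problem of controlling, for primes $p$ in a suitably chosen range, the number of roots of $P$ modulo $p$, equivalently the splitting type of $p$ in $K$. Via a Chebotarev-style argument and the explicit formula for $\zeta_K$, this is in turn reduced to estimates for sums over the nontrivial zeros of $\zeta_K$, which are to be fed into the Markov-chain / mixing-time machinery alluded to in Section \ref{sc:motivation}.

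Under the hypothesis of the theorem, all nontrivial zeros of $\zeta_K$ lie outside a suitable neighbourhood of $s=1$ except possibly for $\rho_0$. The contribution of the non-exceptional zeros should be bounded exactly as in Theorem \ref{th:mild-hyp1}. The new element is the contribution of $\rho_0$: using the lower bound $|1-\rho_0|\ge \exp(-c(\log d)^\gamma)$, one aims to show that this extra term is a small multiplicative perturbation of the main term, with saving essentially of the form $\exp(-(\log d)^{\alpha-\gamma})$ at the relevant scale of primes. Propagating this saving through the mixing-time analysis — whose analogous saving $\exp(-(\log d)^\beta)$ in the setting of Theorem \ref{th:mild-hyp1} produced the probability bound $\exp(-c(\log d)^{\beta-2})$ — should replace $\beta$ by $\alpha-\gamma$ and yield the stated bound $\exp(-c(\log d)^{\alpha-\gamma-2})$.

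The main obstacle I anticipate is the delicate balancing behind the constraint $\alpha > 2\gamma + 2$ (stronger than the naive $\alpha>\gamma+2$ needed just to make the exponent $\alpha-\gamma-2$ positive). The factor of two suggests that $\rho_0$ enters the argument quadratically at some step — for example through a second-moment / variance estimate in the Chebotarev averaging, or because one must simultaneously control exceptional zeros of $\zeta_K$ for two different subfields (say for $\Q(a)$ and for $\Q(a_1,a_2)$, since the argument against balanced factorisations of $\widetilde P$ involves pairs of roots). A secondary technical point is a self-consistency check between the constant $c$ appearing in the conclusion and the constant $c$ hypothesised in $|1-\rho_0|\ge \exp(-c(\log d)^\gamma)$; this should be handled by first fixing the latter as an external parameter and only shrinking it at the end. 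Once the parameters are balanced correctly, the remaining algebraic and combinatorial steps — extraction of the cyclotomic factor $\Phi$, the bound $\deg\Phi\le c\sqrt d$, and the exclusion of balanced factorisations of $\widetilde P$ — should carry over from Theorem \ref{th:mild-hyp1} essentially verbatim.
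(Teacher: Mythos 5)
Your proposal correctly identifies the skeleton (run the Theorem~\ref{th:mild-hyp1} argument and handle the exceptional zero separately), but it is missing the one idea that actually makes the exceptional zero tractable, and your guess about where $\a>2\g+2$ comes from is wrong.

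The gap: you claim that the lower bound $|1-\rho_0|\ge\exp(-c(\log d)^{\g})$ lets you show the term coming from $\rho_0$ is "a small multiplicative perturbation of the main term." It does not, at a single scale. With $X\asymp d(\log d)^{-\b}$, the decay bound of Lemma~\ref{lm:weights} gives $|G_{X,k}(\rho_0)|\le(4k/(|1-\rho_0|X))^k$, which is useless when $|1-\rho_0|\asymp\exp(-c(\log d)^{\g})$ with $\g>1$, since then $|1-\rho_0|\ll 1/X$; indeed $G_{X,k}(\rho_0)$ can be arbitrarily close to $1$, so the explicit-formula output $\sum_{O}(1-G_{X,k}(\rho_{K_O,0}))$ cannot distinguish, say, one orbit with no exceptional zero from two orbits each contributing $1/2$. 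The paper's fix is a \emph{two-scale comparison}: run the whole argument (Proposition~\ref{pr:exp-root-2} plus Proposition~\ref{pr:Bp-sum}) at both $X_2=d(\log d)^{-\b}$ and $X_1=2X_2$ with $\b=\a-\g$, and subtract. The random-polynomial side gives that the two weighted prime sums agree up to $O(\exp(-c(\log d)^{\b-2}))$, while the monotonicity estimate $G_{X_1,k}(\s)/G_{X_2,k}(\s)\le\exp(-(1-\s)(X_1-X_2)/4)$ from Lemma~\ref{lm:weights}, combined with the hypothesised lower bound on $|1-\rho_0|$, forces $G_{X_2,k}(\rho_{K_O,0})-G_{X_1,k}(\rho_{K_O,0})\ge c\exp(-c_0(\log d)^{\g})\,d(\log d)^{-\b}\,G_{X_2,k}(\rho_{K_O,0})$. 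Dividing, one concludes each $G_{X_2,k}(\rho_{K_O,0})$ is itself negligible, whence $|\Omega/G|=1$. Without this (or an equivalent device) your single-scale argument stalls exactly at the exceptional zero.

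This also explains the constraint $\a>2\g+2$, which has nothing to do with second moments or pairs of roots (those enter only in part (2) of Theorem~\ref{th:GRH}; the irreducibility arguments all use $m=1$). Isolating $G_{X_2,k}(\rho_0)$ costs a factor $\exp(c_0(\log d)^{\g})$, which must be absorbed into the error terms $\exp(-c(\log d)^{\a-\b})$ and $\exp(-c(\log d)^{\b-2})$; with $\b=\a-\g$ this requires $\min(\a-\b,\b-2)\ge\g$, i.e.\ $\a-\g-2\ge\g$, which is exactly $\a\ge 2\g+2$. Your secondary point about fixing the constant in $|1-\rho_0|\ge\exp(-c(\log d)^{\g})$ before shrinking it is sound and matches what the paper does with $c_0$.
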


Most of the interest in our final result is when we know unconditionally that the random polynomial $P$ is irreducible with
high probability, e.g. in the setting of the work of Bary-Soroker and Kozma \cite{BSK-210} mentioned above.
Then we  obtain as a direct consequence of the following theorem an unconditional improvement on the bound \eqref{stark-bound} for the exceptional zero of the Dedekind zeta function $\zeta_K$ that holds for most number fields $K$, where $K$ is the sampled by setting $K=\Q(a)$ for a root $a$ of the random irreducible polynomial $P$.

\begin{theorem}\label{th:exp-zero}
For every $\tau>0$ and $\alpha>\beta>3$, there is $c>0$ such that the following holds.
Let $P=A_dx^d+\ldots+A_1x+A_0\in\Z[x]$ be a random polynomial with independent coefficients.
Assume that $A_1,\ldots, A_{d-1}$ are identically distributed with common law $\mu$.
Assume further that all coefficients are bounded by $d^{1/\tau}$ almost surely
and $\|\mu\|_2^2<1-\tau$.

Then with probability at least $1-2\exp(-c(\log d)^{\b-2})$ the following holds for $P$.
There is a root $a$ of $P$ that is not a root of unity, such that $\zeta_{\Q(a)}$ has no zeros $\rho$
with $|1-\rho|<\exp(-(\log d)^{\a+1})$.
\end{theorem}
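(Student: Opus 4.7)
The plan is to argue by contradiction and to connect the existence of a super close zero of $\zeta_{\Q(a)}$ (meaning $|1-\rho|<\exp(-(\log d)^{\alpha+1})$) to atypical statistics of $P$ modulo primes, then to invoke the Markov chain mixing estimates that drive the proofs of Theorems \ref{th:mild-hyp1} and \ref{th:mild-hyp2}. Suppose, on an event of probability larger than $2\exp(-c(\log d)^{\beta-2})$, that every non-cyclotomic irreducible factor $Q$ of $P$ has $\zeta_{K_Q}$ admitting such a super close zero, where $K_Q=\Q(a)$ for any root $a$ of $Q$; I will derive a contradiction.

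I begin by applying Theorem \ref{th:mild-hyp1} with parameters $(\tau,\alpha,\beta)$: on a high probability event $E$, the implication ``no $\zeta_K$ has a zero in $|1-\rho|<(\log d)^\alpha/d$ $\Rightarrow$ $P=\Phi\wt P$ with $\wt P$ irreducible and non-cyclotomic'' holds. If the hypothesis of this implication is satisfied, then any root of $\wt P$ is a non-root-of-unity with $\zeta_{\Q(a)}$ free of zeros even in the much larger disc $|1-\rho|<(\log d)^\alpha/d$, and we are done. The remaining scenario, in which some $\zeta_{\Q(a)}$ hosts a zero in that disc, is where the contradiction hypothesis bites: under it, the offending zero must in fact be super close, i.e.\ $|1-\rho|<\exp(-(\log d)^{\alpha+1})$, for every non-cyclotomic irreducible factor of $P$.

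The decisive step is to translate a super close zero $\rho$ of $\zeta_K$ into atypical statistics of $P$ modulo primes via the explicit formula. A zero with $|1-\rho|\le\delta$ forces the residual term $X^\rho/\rho$ in $\pi_K(X)$ to have magnitude comparable to $\operatorname{li}(X)$ at scales $X\lesssim\delta^{-1}$, producing a deviation of order $X/\log X$ in the count of degree-one primes of $K$ up to $X$. Through the classical identification
\[
\#\{\text{degree-one primes of }K=\Q(a)\text{ above }p\}=\#\{\text{roots of the minimal polynomial of }a\text{ mod }p\},
\]
this deviation is equivalent to an atypical number of roots of the corresponding irreducible factor of $P$ modulo each prime $p\le\exp((\log d)^{\alpha+1})$. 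Aggregating over the non-cyclotomic irreducible factors, the contradiction assumption forces the total number of roots of $P$ modulo such $p$ to deviate systematically from the Chebotarev expectation.

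Such a systematic deviation over a large set of primes is exactly what the random walk / Markov chain mixing estimates underlying Theorems \ref{th:mild-hyp1} and \ref{th:mild-hyp2} exclude with probability at least $1-2\exp(-c(\log d)^{\beta-2})$, and combining the two probability bounds yields the theorem. The main technical obstacle I anticipate lies in the quantitative tuning of the explicit formula: one must choose the test function and truncation so that a zero as close to $1$ as $\exp(-(\log d)^{\alpha+1})$ forces a detectable deviation at scales where the mixing estimate already provides strong control, uniformly over all number fields $\Q(a)$ that can appear as $a$ varies over the roots of $P$.
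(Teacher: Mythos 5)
Your global strategy is the paper's: argue by contradiction, use the explicit formula to show that a zero extremely close to $1$ makes the weighted count of roots of $P$ modulo primes anomalously small, and contradict this with the random-walk mixing estimate which forces that count to be close to $1$ with probability $1-2\exp(-c(\log d)^{\b-2})$. However, there is a genuine gap at the step you call decisive. From the explicit formula the weighted count equals $G(1)-\sum_\rho G(\rho)$, and the super-close zero $\rho_0$ only cancels the main term, giving $1-G(\rho_0)\le X|1-\rho_0|\approx 0$. To conclude that the \emph{whole} sum is near $0$ you must also control $\sum_{\rho\neq\rho_0}G(\rho)$, and the contradiction hypothesis gives you no a priori control on zeros at intermediate distances from $1$ (say between $\exp(-(\log d)^{\a+1})$ and $(\log d)^{\a}/d$); each such zero can contribute a term of size up to $1$ with uncontrolled phase, and Stark's density bound allows polynomially many of them. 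No choice of test function fixes this, so "quantitative tuning of the explicit formula" is not the missing ingredient. What closes the argument in the paper is the Deuring--Heilbronn phenomenon: first, by Stark's lemma the super-close zero must be real and is the unique Siegel zero; second, its existence \emph{repels} all other zeros to distance at least $(\log d)^{\a'}/d$ from $1$ (for some $\a'\in(\b,\a)$), at which point the decay bound $|G_{X,k}(\rho)|\le(4k/(|1-\rho|X))^k$ with $X=d(\log d)^{-\b}$ kills their total contribution and Proposition \ref{pr:Bp-sum} applies. Without invoking this repulsion your argument does not produce a contradiction.

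Two smaller points. First, your preliminary case split via Theorem \ref{th:mild-hyp1} is an unnecessary detour: the paper goes directly through the second-moment estimate (Proposition \ref{pr:exp-root-2}) to pin the weighted root count near $1$, and the dichotomy you set up does not simplify anything, since in the remaining case you still face exactly the zero-control problem above. Second, your proposed scale of primes $p\le\exp((\log d)^{\a+1})$ is too small: the error analysis for the non-exceptional zeros forces $\log p\asymp X\approx d(\log d)^{-\b}$, which is also the scale at which Proposition \ref{pr:exp-root-2} is calibrated; the point is not that the deviation is visible only for small $p$, but that it persists up to $X\ll|1-\rho_0|^{-1}$, which comfortably includes the required range.
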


\subsection{An outline of the proof}\label{sc:outline}

Our strategy for proving the results stated above aims at finding information about the distribution
of the degree sequence in the factorization of the random polynomial $P$ in $\F_p[x]$, and then
uses this information to study irreducibility of $P$ in $\Z[x]$ and the Galois group of its splitting field.

Bary-Soroker and Kozma \cite{BSK-210} approximated (in a certain sense)
the degree sequence in the factorization of a polynomial chosen uniformly at random from degree $d$ monic
polynomials in $\F_p[x]$.
It is very plausible that such an approximation holds in greater generality not only for the uniform distribution,
but we do not know how to prove this.
However, we are able to approximate the statistics of the number of low degree factors and this allows us
to gain information about the Galois groups using special cases of the Chebotarev density theorem.

The most relevant density theorem for our purposes is the prime ideal theorem, which has the following
consequence.

\begin{theorem}\label{th:weinberger}
Let $P_0\in\Z[x]$ be a fixed polynomial and let $p$ be a random prime chosen uniformly in a dyadic range $[y,2y)$.
Then
\begin{equation}\label{eq:random-prime}
\E[\text{number of roots of $P_0$ in $\F_p$}]\to\{\text{number of distinct irred. factors of $P_0$}\}
\end{equation}
as $y\to \infty$.
\end{theorem}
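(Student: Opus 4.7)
The plan is to derive this from the prime ideal theorem for the Dedekind zeta functions of the number fields cut out by the irreducible factors of $P_0$, together with the prime number theorem.

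First, by linearity of expectation I would reduce to the case when $P_0$ is irreducible. Factoring $P_0=\prod_{i=1}^{r}Q_i^{e_i}$ into distinct monic irreducibles $Q_i\in\Z[x]$, for every prime $p$ outside the finite ``bad'' set $S$ of divisors of $\prod_i \mathrm{disc}(Q_i)\cdot \prod_{i<j}\mathrm{Res}(Q_i,Q_j)$, each $Q_i$ is separable over $\F_p$ and the $Q_i$ have pairwise disjoint root sets in $\overline{\F_p}$. Hence the zero set of $P_0$ in $\F_p$ is the disjoint union of the zero sets of the $Q_i$. The contribution of $p\in S$ to the expectation over $p\in[y,2y)$ is $O(|S|/(\pi(2y)-\pi(y)))=o(1)$, so it suffices to prove that for each irreducible $Q\in\Z[x]$,
\[
\E_{p\in[y,2y)}\bigl[\#\{\text{roots of }Q\text{ in }\F_p\}\bigr]\longrightarrow 1.
\]

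Next, let $K=\Q[x]/(Q)$ and $\cO_K$ its ring of integers. By Kummer--Dedekind, for all but finitely many primes $p$ (those ramifying in $K$ or dividing the conductor $[\cO_K:\Z[x]/(Q)]$), the roots of $Q$ in $\F_p$ are in bijection with the prime ideals $\mathfrak{p}\subset\cO_K$ of residue degree one above $p$, i.e.\ those satisfying $N\mathfrak{p}=p$. Therefore, up to an $o(1)$ error,
\[
\E_{p\in[y,2y)}\bigl[\#\{\text{roots of }Q\text{ in }\F_p\}\bigr]=\frac{\#\{\mathfrak{p}\subset\cO_K:N\mathfrak{p}\text{ is a rational prime in }[y,2y)\}}{\pi(2y)-\pi(y)}.
\]

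To conclude, I would invoke the prime ideal theorem for $\zeta_K$, which gives $\#\{\mathfrak{p}:N\mathfrak{p}\le x\}\sim x/\log x$ as $x\to\infty$. Prime ideals of residue degree $f\ge 2$ have norms $p^f$ with $p\le x^{1/2}$, and there are at most $[K:\Q]$ such ideals above each $p$; their total contribution with $N\mathfrak{p}\le x$ is therefore $O(\sqrt{x})$. Consequently the number of degree-one prime ideals with norm in $[y,2y)$ is asymptotic to $y/\log y$, which matches $\pi(2y)-\pi(y)\sim y/\log y$ from the prime number theorem. Dividing gives the limit $1$, and summing over the $r$ distinct irreducible factors of $P_0$ yields the stated asymptotic. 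This is essentially a textbook application of the prime ideal theorem, so I do not expect a serious obstacle; the only mildly delicate point is the clean excision of the finite ``bad'' set of primes where Kummer--Dedekind fails or where distinct irreducible factors of $P_0$ happen to share roots mod $p$, but this contributes a negligible $o(1)$ to the averaged expectation.
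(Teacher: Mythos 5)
Your argument is correct and follows essentially the same route as the paper, which proves this statement via the machinery of Sections 2--4: the identification of roots of an irreducible factor mod $p$ with prime ideals of norm $p$ (the paper phrases this through Frobenius fixed points in Lemmas \ref{lm:Frob}--\ref{lm:ApFrob} and Proposition \ref{pr:ApBp}, where you invoke the equivalent Kummer--Dedekind correspondence), followed by the prime ideal theorem as in Proposition \ref{pr:prime-sum-GRH}. The bookkeeping of bad primes (discriminants, resultants, the conductor) and the $O(\sqrt{x})$ contribution of higher-degree prime ideals is handled correctly, so there is no gap.
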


This observation was suggested as a basis for an algorithm to compute the number of irreducible factors of
a polynomial by Weinberger \cite{Wei-factors}.

If the Riemann hypothesis holds for $\zeta_K$ for all $K=\Q(a)$, where $a$ is a root of $P_0$, then
the approximation \eqref{eq:random-prime} is valid once $y>C(\e)(\log\Delta_{P_0})^{2+\e}$.
A more precise discussion of these ideas including proofs will be given in Sections \ref{sc:PIT}--\ref{sc:roots-random-fields}.
We note that if we wish to approximate the distribution of the full degree sequence of the factorization
of $P_0$ in $\F_p$ using the Chebotarev density theorem, then we need to take a much larger value for
$y$ even if we assume the Riemann hypothesis for all relevant Dedekind zeta functions.
Indeed, that would require us to replace the discriminant of $P_0$ with the discriminant of its splitting
field in the above bound, which is potentially much larger, and that would not be sufficient for our purposes.

 The next aim of our strategy is to show that
 \begin{equation}\label{eq:random-poly}
 \E[\text{number of roots of $P$ in $\F_{p_0}$}]\approx 1,
 \end{equation}
 where $P$ is a random polynomial in the setting of the above theorems and $p_0$ is a fixed prime in the range
 $[y,2y)$, which is suitably large for the approximation in \eqref{eq:random-prime} to hold.
 
 If we achieve this goal, then we can randomize the polynomial in \eqref{eq:random-prime} and the prime in \eqref{eq:random-poly}
 and compare the right hand sides to obtain
 \[
\E[\text{number of distinct irred. factors of $P$}]\approx 1.
 \]
Since the number of irreducible factors is always a positive integer, Markov's inequality implies that $P$ has
only one irreducible factor with high probability.
When we will give the details of the argument, we will choose a slightly different route by estimating the second
moments and applying Chebyshev's inequality.
Although this is not necessary for Theorems \ref{th:GRH}, \ref{th:mild-hyp1} and \ref{th:mild-hyp2}, it does help in that it is enough to make the assumption on the Dedekind zeta functions only for those polynomials for
which the conclusion holds. On the other hand, the second moment estimates are necessary for Theorem \ref{th:exp-zero}.

To establish \eqref{eq:random-poly}, we fix an element $a\in\F_{p_0}$ and consider an additive random walk
on $\F_{p_0}$ whose $j$-th increment is $A_ja^j$.
The endpoint of this walk is $P(a)$.
If we can show that the walk mixes rapidly, then we can conclude that
\begin{equation}\label{eq:mixing}
\P(P(a)=0)\approx\frac{1}{p_0}.
\end{equation}
Summing up the probabilities for each $a\in\F_{p_0}$ we arrive at \eqref{eq:random-poly}.

The study of random walks of this kind goes back at least to Chung, Diaconis and Graham \cite{CDG-RW},
who considered the case $a=2$.
Their work has been extended in several directions by Hildebrand (see \cite{Hil-thesis}), however he mostly
focused on the case in which $a$ is a fixed integer independent of $p_0$.
In the setting when $a$ may vary with $p_0$, the diameter of the underlying graph was considered by Bukh, Harper
and Helfgott in an unpublished work, see also \cite{Hel-growth}*{footnote 4 on page 372}, that is, they considered how large $d$ needs
to be taken so that the walk reaches every element of $\F_{p_0}$ with positive probability.
Their approach relies on certain estimates of Konyagin \cite{Kon-RW} pertaining to the Waring problem on finite fields
and we will apply the same method.
See also \cite{breuillard-varju-Lehmer}, where the connection between these random walks and Lehmer's conjecture is
explored.

It turns out that the random walk does not mix fast enough for certain choices of the parameter $a$.
Indeed, if $a=0$, then the walk does not mix at all.
Moreover, if $a=1$, then the mixing time (i.e. how large $d$ needs to be taken for \eqref{eq:mixing} to hold)
will be $\approx p_0^2$, as can be seen by the central limit theorem.
A similar issue arises if $a$ has low multiplicative order.
Therefore, it is useful to exclude certain elements of $\F_{p_0}$ from the count.
We say that an element $a\in\F_{p_0}$ is admissible if it is not the root of a cyclotomic polynomial of degree at most $\log p_0$.
We can then modify \eqref{eq:random-prime} by counting admissible primes on the left hand side and non-cyclotomic
factors on the right.
When we give the details of the argument we will exclude from the admissible elements
not only the roots of cyclotomic polynomials but also
the roots of polynomials of very small Mahler measure.
This allows us to obtain improved bounds.

We are able to show that the mixing time is at most $\log p(\log\log p)^{3+\e}$
for most of the parameters $a\in\F_p$ in a sufficiently strong sense required by our
application.\footnote{We can get better results for typical parameters in a weaker sense,
which is not suitable for the purposes of this paper.
These results will appear in a forthcoming paper.}
This allows us to set $y=\exp(d/(\log d)^{3+\e})$ when we apply \eqref{eq:random-prime}.
Even if we disregard the effect of the exceptional zero, our current knowledge about the zeros
of Dedekind zeta functions would require the larger range $y=\exp(Cd\log d)$.
Unfortunately, an argument based on the analysis of the random walks for a fixed parameter $a\in \F_p$
cannot yield a mixing time better than $c\log p$, since the number of points that the random walk
can reach grows exponentially with the number of steps.
To overcome this barrier, one would need to consider the average distribution of the random walk
over the parameters $a\in\F_p$.
This however seems to be exceedingly difficult to study.

There is one last issue that we need to consider.
The above argument cannot distinguish between irreducible polynomials and proper powers.
Indeed, we are able count distinct irreducible factors only.
To show that $P$ is not a proper power with high probability, we show that $P(2)$ is
not a proper power.
To that end, we will use the large sieve together with the classical $a=2$ case of the above
discussed random walks.

Using the above method, we can also obtain information about the Galois group of $P$.
What we discussed so far amounted to showing that the Galois group acts transitively on the
complex roots of $P$.
A more general version of this argument can be used to show that the action is $k$-transitive for large values of $k$, large enough that it forces the Galois group to contain the alternating group.

 Finally, we comment on the proof of Theorem \ref{th:exp-zero}.
 If the Dedekind zeta function has an exceptional zero, then all other zeros are repelled away from
 $1$ by what is known as the Deuring-Heilbronn phenomenon.
 In the context of Theorem \ref{th:weinberger}, this implies that the left hand side of \eqref{eq:random-prime}
is close to zero for a certain range of primes.
This can be contradicted by \eqref{eq:random-poly}.

\subsection{Organization of the paper}\label{sc:organization}

In Sections \ref{sc:PIT}--\ref{sc:roots-random-fields} we discuss the prime ideal theorem and use it to obtain estimates for the average
number of roots of a polynomial in finite fields related to \eqref{eq:random-prime}.
In Sections \ref{sc:RW} and \ref{sc:exp-root-2}, we study equidistribution of random walks,
we revisit Konyagin's estimates in \cite{Kon-RW} and the argument
suggested by Bukh, Harper and Helfgott.
In Section \ref{sc:Lehmer}, we give an upper bound on the probability that the random polynomial $P$ has a factor of small
Mahler measure utilizing some ideas of Konyagin \cite{Kon-01}.
In Section \ref{sc:powers} we use the large sieve to show that $P$ is not the product of a proper power and cyclotomic factors
with high probability.
In Section \ref{sc:proofs} we combine the above ingredients to prove the results stated in Section \ref{sc:results}.

\subsection{Notation}\label{sc:notation}

If $K$ is a number field, we write $d_K$ for its degree and $\Delta_K$ for its discriminant.
If $P\in\Z[x]$ is a polynomial, we write $d_P$ for its degree, $\Delta_P$ for its discriminant
and
\[
M(P)=a_d\prod_{z_j:|z_j|>1}|z_j|
\]
for its Mahler measure, where $a_d$ is the leading coefficient of $P$ and $z_j$ runs through the complex roots
of $P$ taking multiplicities into account.
We recall the estimates
\begin{equation}\label{measure-bounds}
1+c\Big(\frac{\log\log d_P}{\log d_P}\Big)^3\le M(P)\le(a_0^2+\ldots+a_d^2)^{1/2},
\end{equation}
where the upper bound holds for all $P\neq0\in\Z[x]$ and the lower bound holds if in addition $P$ is not
the product of cyclotomic polynomials and $x^m$ for some $m\in\Z_{\ge0}$.
Here $c>0$ is an absolute constant $a_0,\ldots,a_d$ are the coefficients of $P$.
See \cite{Dob-Lehmer} for the inequality on the left hand side and \cite{BG-heights}*{Lemma 1.6.7} for the right hand side.

We write $\sum_p$ for summation over rational primes.

Throughout the paper we use the letters $c$ and $C$ to denote positive numbers whose values may vary at
each occurrence.
These values are effective and numerical: they could, in principle,
be determined by following the arguments.
We will use upper case $C$ when the number is best thought to be large, and lower case $c$ when it is
best thought to be small.
In addition we will use Landau's $O(X)$ notation to denote a quantity that is bounded in absolute value
by a constant multiple of $X$.

\subsection{Acknowledgments}

The authors are grateful to Boris Bukh, Mohammad Bardestani and Peter Sarnak for helpful discussions
on various aspects of this work. 
We thank the referees for their careful reading of our paper and for useful comments and suggestions.

\section{The prime ideal theorem}\label{sc:PIT}

Let $K$ be a number field of degree $d=d_K$ with discriminant $\Delta=\Delta_K$ and denote by $\cO_K$ its ring of integers.
Write $\zeta_K$ for the Dedekind zeta function of $K$.
Write $A(n)=A_{K}(n)$ for the number of prime ideals $\fp\subset\cO_K$ with $N_{K/\Q}(\fp)=n$.

The purpose of this section is to compute the average value of $A(p)$ with respect to suitably
chosen weights supported on primes.
We first consider this question under the assumption that RH holds for $\zeta_K$. In what follows, $\sum_p$ indicates summation over all positive primes in $\Z$.

\begin{proposition}\label{pr:prime-sum-GRH}
Let $X>1$ be a number and let
\[
h_X(u)=
\begin{cases}
2\exp(-X) & \text{if $u\in(X-\log 2,X]$,}\\
0 &\text{otherwise.}
\end{cases}
\]
If RH holds for $\zeta_K$, then
\[
\sum_{p} A(p)\log (p)h_X(\log p)=1+O(X^2\log(\Delta)\exp(-X/2)),
\]
where the implied constant is absolute.
\end{proposition}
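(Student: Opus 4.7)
The plan is to rewrite the sum as a normalized dyadic difference of the Dedekind-Chebyshev function $\psi_K(y):=\sum_{\fp,\,m\ge 1,\,N(\fp)^m\le y}\log N(\fp)$, and then feed in the effective prime ideal theorem under GRH. The window function $h_X$ is $2e^{-X}$ times the characteristic function of $(X-\log 2, X]$, so under the substitution $u=\log p$ one has $h_X(\log p) = 2e^{-X}\one_{\{e^X/2 < p \le e^X\}}$, giving
\[
\sum_p A(p)\log(p)\, h_X(\log p)=2 e^{-X}\sum_{e^X/2<p\le e^X}A(p)\log p.
\]
The right-hand sum is precisely the contribution to the dyadic block $\psi_K(e^X)-\psi_K(e^X/2)$ from prime ideals $\fp$ of residue degree $f=1$ counted with exponent $m=1$. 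Since each rational prime has at most $d_K$ ideal divisors in $\cO_K$, and since any $(f,m)\ne(1,1)$ forces $N(\fp)^m\ge p^2$, an elementary dyadic count bounds the complementary $(f,m)\ne(1,1)$ part by $O(d_K\, e^{X/2})$.

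Next, I would apply the effective prime ideal theorem under GRH in the Lagarias-Odlyzko form
\[
\psi_K(y)=y+O\bigl(\sqrt{y}\,\log y\,\log(\Delta\, y^{d_K})\bigr),
\]
which follows from the truncated explicit formula for $\psi_K$ by bounding the sum $\sum_\rho y^\rho/\rho$ over non-trivial zeros (using $|y^\rho|=\sqrt{y}$ under RH) via the Riemann-von Mangoldt estimate $N_K(T)\ll T\log(\Delta\, T^{d_K})$ on the number of zeros with $|\Im\rho|\le T$, with $T$ chosen optimally. Taking the dyadic difference and combining with the $(f,m)\ne(1,1)$ remainder above yields
\[
\sum_{e^X/2<p\le e^X}A(p)\log p = \tfrac{1}{2}e^X+O\bigl(e^{X/2}\,X\,(X d_K+\log\Delta)\bigr),
\]
and multiplying by $2e^{-X}$ returns the main term $1$ together with an error $O(e^{-X/2}X(X d_K+\log\Delta))$. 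Using Minkowski's lower bound $d_K\ll\log\Delta$ for $d_K\ge 2$ (with the case $K=\Q$ recovered from the classical PNT under RH, absorbed into the convention that $\log\Delta$ is replaced by $\max(\log\Delta,1)$), the bracketed quantity is $O(X^2\log\Delta)$, which is the advertised error.

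The principal technical obstacle is the effective PIT under GRH with explicit and uniform dependence on both the degree $d_K$ and the discriminant $\Delta$: everything else, including the algebraic separation of the $f=1,m=1$ contribution, the dyadic subtraction, and the Minkowski-type comparison of $d_K$ and $\log\Delta$, is routine bookkeeping. The normalization $2e^{-X}$ comes out correctly because the dyadic interval $(e^X/2, e^X]$ has length $e^X/2$.
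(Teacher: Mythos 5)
Your proposal is correct and follows essentially the same route as the paper: both express the sum as $2e^{-X}$ times the dyadic difference $\psi_K(e^X)-\psi_K(e^X/2)$, invoke the GRH-effective prime ideal theorem with error $O(\sqrt{x}(\log x\log\Delta+d_K(\log x)^2))$, absorb $d_K$ via Minkowski's bound, and dispose of the prime-power contributions with a crude $O(d_K X e^{X/2})$ count. The only discrepancy is the harmless omission of a factor $X$ in your bound for the $(f,m)\neq(1,1)$ terms, which is absorbed by the stated error anyway.
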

\begin{proof}
We write
\[
\psi_K(x)=\sum_{n,m\in\Z_{>0}: n^m\le x} A(n)\log n.
\]
There is an absolute constant $C>0$ such that if $RH$ holds for $\zeta_K$, then for all $x >1$,
\[
|\psi_K(x)-x| \leq C \sqrt{x}( \log x \log \Delta + d (\log x)^2)
\]
See for example \cite{GM-PIT}*{Corollary 1.2}. Applying this for $x=\exp(X)$ and $x=\exp(X)/2$,
we find that
\begin{align*}
\sum_{n=1}^{\infty}\sum_{m=1}^{\infty} A(n)\log (n)h_X(\log n^m)
=&2\exp(-X)(\psi_K(\exp(X))-\psi_K(\exp(X)/2))\\
=&1+O(X^2\log(\Delta)\exp(-X/2)).
\end{align*}
Here we used that $d_K\le C(\log\Delta_K)$ by Minkowski's lower bound on the discriminant.

We estimate the contribution of the summands for which $n^m$ is not a prime.
First we note that for each of these terms, $n^m$ is a proper power, and there
are at most
\[
\exp(X/2)+\exp(X/3)+\ldots +\exp(X/\lceil X\rceil)\le C\exp(X/2)
\]
such numbers between $\exp(X)$ and $\exp(X)/2$.
Each such number can be written in the form $n^m$ in at most $X$ different ways,
and $A(n)\log n\le d_K X$.
Therefore
\[
\sum_{n,m:\text{ $n^m$ is not prime}}A(n)\log (n)h_X(\log n^m)\le CX^2\log(\Delta)\exp(-X/2).
\]
\end{proof}

The purpose of the rest of this section is to formulate a variant of this proposition with a milder
assumption on the zeros of $\zeta_K$.
Readers only interested in the proof of Theorem \ref{th:GRH} may skip to the next section.
Everything that follows is well known and classical.
We begin by recalling the smooth version of the explicit formula.

\begin{theorem}\label{th:explicit}
Let $g\in C^2(\R)$ be a function supported in a compact interval contained in $\R_{>0}$.
Write
\[
\widehat g(s)=\int_{\R} \exp(isu)g(u)du.
\]
Then
\[
\sum_{n=1}^{\infty}\sum_{m=1}^{\infty} A(n)\log(n) g(m\log(n))= \wh g(-i)-\sum_\rho \wh g(-i\rho),
\]
where the summation over $\rho$ is taken over all zeros of $\zeta_K$ (including the trivial ones) taking multiplicities into account.
\end{theorem}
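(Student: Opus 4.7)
The strategy is a standard application of Mellin--Laplace inversion together with the residue theorem: starting from a contour integral of $\Phi(s)\bigl(-\zeta_K'(s)/\zeta_K(s)\bigr)$ at $\Re s>1$, the Dirichlet series expansion computes the arithmetic sum on the left, while shifting the contour leftwards picks up residues at $s=1$ and at the zeros of $\zeta_K$.

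Set $\Phi(s):=\wh g(-is)=\int_\R e^{su}g(u)\,du$. Since $g\in C^2$ has compact support, $\Phi$ is entire, and two integrations by parts yield $|\Phi(\sigma+it)|\ll_\sigma(1+|t|)^{-2}$ uniformly on any vertical strip. For $u>0$ a standard Laplace inversion (valid for any $c\in\R$, since $\Phi$ is entire with rapid vertical decay) gives $\tfrac{1}{2\pi i}\int_{(c)}\Phi(s)e^{-us}\,ds=g(u)$.

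Now fix $c>1$. The Dirichlet series $-\zeta_K'(s)/\zeta_K(s)=\sum_{n,m\ge1}A(n)\log(n)\,n^{-ms}$ converges absolutely on $\Re s=c$, and together with the vertical decay of $\Phi$ this justifies interchanging sum and integral in
\[
\frac{1}{2\pi i}\int_{(c)}\Phi(s)\Bigl(-\frac{\zeta_K'(s)}{\zeta_K(s)}\Bigr)ds=\sum_{n,m}A(n)\log(n)\cdot\frac{1}{2\pi i}\int_{(c)}\Phi(s)e^{-(m\log n)s}ds;
\]
the inner integral equals $g(m\log n)$ by the inversion formula, so the left-hand side equals the LHS of the theorem. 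Next, shift the contour from $\Re s=c$ to $\Re s=-T$ and let $T\to\infty$. The residue theorem contributes $+\Phi(1)=\wh g(-i)$ from the simple pole of $\zeta_K$ at $s=1$ (where $-\zeta_K'/\zeta_K$ has residue $+1$) and $-\Phi(\rho)=-\wh g(-i\rho)$ from each zero $\rho$ (trivial or not, with multiplicity). Horizontal segments at heights $\pm T_n\to\pm\infty$ vanish along a suitably chosen sequence of heights avoiding zeros, using the $(1+|t|)^{-2}$ decay of $\Phi$ and the classical polynomial-in-$\log|t|$ bounds for $\zeta_K'/\zeta_K$ between zeros.

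The main obstacle is the vanishing of the vertical integral on $\Re s=-T$ together with absolute convergence of $\sum_\rho\wh g(-i\rho)$. For the former, the hypothesis $\supp g\subset\R_{>0}$ is crucial: choosing $[a,b]\supset\supp g$ with $a>0$ gives $|\Phi(\sigma+it)|\le(b-a)\|g\|_\infty e^{a\sigma}$, which decays exponentially as $\sigma\to-\infty$, whereas the functional equation for $\zeta_K$ combined with Stirling's formula bounds $|\zeta_K'(s)/\zeta_K(s)|$ on $\Re s=-T$ by at most a polynomial in $T$ and $\log|t|$; the factor $e^{-aT}$ defeats this polynomial growth. Absolute convergence of the sum over zeros follows from the Riemann--von Mangoldt-type estimate $\#\{\rho:|\Im\rho-T|\le1\}=O(\log\Delta_K+d_K\log(2+|T|))$ together with the $(1+|t|)^{-2}$ decay of $\Phi$.
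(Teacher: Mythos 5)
Your proof is correct and follows essentially the same route as the paper: integrate $\wh g(-is)\bigl(-\zeta_K'/\zeta_K(s)\bigr)$ over a vertical line with $\Re s>1$, identify the arithmetic sum via Fourier/Laplace inversion, and shift the contour to $\Re s=-\infty$ to collect the residue at $s=1$ and at the zeros. You supply the convergence details (horizontal segments, exponential decay from $\supp g\subset\R_{>0}$, and the zero-counting bound) that the paper explicitly leaves to the reader.
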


This result is well known but it does not seem to be readily available in this form in standard text books, therefore
we give the very short proof for the reader's convenience.

\begin{proof}
We note that
\[
\frac{\zeta_K'}{\zeta_K}(s)=-\sum_{n=1}^{\infty}\sum_{m=1}^{\infty} A(n)\log(n)n^{-ms}
\]
for $\Re(s)>1$.

Since $g$ is compactly supported and $C^2$, $\wh g$ is holomorphic and $|\wh g(is)|=O(|\Im(s)|^{-2})$
with an implied constant (continuously) depending only on $\Re(s)$.
By the Fourier inversion formula, we have
\begin{align*}
\int_{\Re(s)=2}n^{-s}\wh g(-i s)ds=&\int_{\Re(s)=0}n^{-s}\wh g(-i s)ds\\
=&i\int_{-\infty}^{\infty}\exp(-i t\log n)\wh g(t)dt=2\pi i g(\log n).
\end{align*}
for each $n\in\Z_{>0}$.

Therefore, we have
\[
\frac{-1}{2\pi i}\int_{\Re(s)=2} \frac{\zeta_K'}{\zeta_K}(s) \wh g(-i s) ds=\sum_{n=1}^{\infty}\sum_{m=1}^{\infty} A(n)\log(n)g(m\log n).
\]
Shifting the contour integration to $\Re(s)=-\infty$ we can recover the claimed formula from the residue theorem.
We note that $\supp(g)\subset\R_{>0}$, $\wh g(-is)$ decays exponentially as $\Re(-is)\to \infty$ and leave the verification
of the rest of the details to the interested reader.
\end{proof}

In the next lemma, we introduce the weight functions that we will use and establish some of their properties.
The aim is to find compactly supported weights $g$ such that its Laplace transform $G(s)=\wh g(-is)$ decays fast when $\Re(s)\le 1$
and $s$ is moving away from $1$.
To achieve the optimal decay, it is useful to choose $g$ depending on the distance of $s$ from $1$ where we wish to
make $G(s)$ small.
The construction was inspired by Ingham \cite{Ing-Fourier}.

\begin{lemma}\label{lm:weights}
Let $X\in\R_{>0}$ and let $k\in\Z_{>0}$.
For $r\in\R_{>0}$, write
\[
I_r(u)=
\begin{cases}
\frac{1}{r} & \text{if $u\in[-r/2,r/2]$}\\
0 &\text{otherwise.}
\end{cases}
\]
Let
\begin{align}
g_{X,k}(u)=&\exp(-u)\underbrace{I_{X/2k}*\ldots *I_{X/2k}}_{\text{$k$-fold}}(u-3X/4),\label{eq:gXk}\\
G_{X,k}(s)=&\widehat g_{X,k}(-i s)=\int_{\R}\exp(su) g(u) du.\label{eq:GXk}
\end{align}

Suppose $k\ge 4$ and $X\ge 2k$.
Then $g_{X,k}\in C^2(\R)$ and it is supported in $[X/2,X]$ and we have $g_{X,k}(u)\le \exp(-u)$ for all $u\in\R$.
We have $G(1)=1$ and the following bounds hold for all $s\in\C$ with $\Re(s)\le 1$ and for all
$\s\in(0,1)$ and $X_1>X_2$,
\begin{align*}
0 \le 1-G_{X,k}(\sigma)\le& X(1-\sigma),\\
|G_{X,k}(s)|\le& \Big(\frac{4k}{|1-s|X}\Big)^k,\\
|G_{X,k}(s)|\le&\exp((\Re(s)-1)X/2),\\
\frac{G_{X_1,k}(\s)}{G_{X_2,k}(\s)}\le&\exp(-(1-\s)(X_1-X_2)/4).
\end{align*}
\end{lemma}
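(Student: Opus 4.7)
My plan is to reduce everything to manipulations of a single closed-form expression for $G_{X,k}$. First, write $g_{X,k}(u) = e^{-u} B_k(u - 3X/4)$ with $B_k := I_{X/2k}^{*k}$. Since $I_{X/2k}$ is supported in $[-X/(4k), X/(4k)]$, the $k$-fold convolution $B_k$ is supported in $[-X/4, X/4]$, so $g_{X,k}$ is supported in $[X/2, X]$. Each convolution with the bounded compactly supported function $I_{X/2k}$ raises the regularity by one order, so $B_k \in C^{k-2} \subset C^2$ whenever $k \ge 4$. The pointwise inequality $g_{X,k}(u) \le e^{-u}$ reduces to $\|B_k\|_\infty \le 1$, which follows from Young's inequality: $\|B_k\|_\infty \le \|I_{X/2k}\|_\infty \cdot \|I_{X/2k}\|_1^{k-1} = 2k/X \le 1$ under the hypothesis $X \ge 2k$.

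Next, the Laplace transform of $I_r$ at $s$ is $\sinh(rs/2)/(rs/2)$, so the definition of $g_{X,k}$ gives
\[
G_{X,k}(s) = \exp\bigl(3(s-1)X/4\bigr) \left( \frac{\sinh((s-1)X/(4k))}{(s-1)X/(4k)} \right)^{k},
\]
from which $G_{X,k}(1) = 1$ is immediate. The first claimed bound follows from writing $1 - G_{X,k}(\sigma) = \int \bigl(1 - e^{(\sigma-1)u}\bigr) B_k(u - 3X/4)\, du$ and applying the elementary inequality $0 \le 1 - e^{-x} \le x$ together with $\int B_k = 1$ and $u \le X$ on the support. The third bound is cleanest from the definition itself rather than the closed form: bounding $g_{X,k}(u) \le e^{-u}$, noting that $u \ge X/2$ on the support, and using $\int B_k = 1$ yields $|G_{X,k}(s)| \le e^{(\Re(s)-1)X/2}$.

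The remaining two bounds use the closed form. For the second, applying $|\sinh(z)| \le e^{|\Re z|}$ to the numerator yields a factor $e^{|\Re(s-1)|X/4}$ which, combined with $|\exp(3(s-1)X/4)| = e^{-3|\Re(s-1)|X/4}$ on the half-plane $\Re(s) \le 1$, multiplies to $e^{-|\Re(s-1)|X/2} \le 1$, leaving the polynomial factor $(4k/(|1-s|X))^k$. For the last bound, setting $t = 1 - \sigma$ and $f(X) := \sinh(tX/(4k))/(tX/(4k))$, I will compute $(\log f)'(X) = (t/(4k))\coth(tX/(4k)) - 1/X$ and apply the elementary inequality $\coth(y) - 1 = 2/(e^{2y} - 1) \le 1/y$ to conclude $(\log f)'(X) \le t/(4k)$; integrating and combining with the exponential prefactor $\exp(-3t(X_1 - X_2)/4)$ yields the ratio bound (in fact with the stronger constant $1/2$ in place of the stated $1/4$). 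No step poses any conceptual obstacle; the one place that needs care is this last ratio bound, where it is essential that the $-1/X$ term in the logarithmic derivative exactly cancels the $1/y$ overshoot in $\coth(y) \le 1 + 1/y$.
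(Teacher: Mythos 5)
Your proof is correct and follows essentially the same route as the paper: everything rests on the closed form $G_{X,k}(s)=\exp(3(s-1)X/4)\bigl(\sinh((s-1)X/4k)/((s-1)X/4k)\bigr)^k$ together with elementary bounds on $\sinh(z)/z$, and your logarithmic-derivative computation for the ratio bound is the same calculation the paper carries out via the functions $F_1$ and $F_2$. The only (harmless) deviations are that you establish $C^2$ regularity by direct convolution smoothing rather than by Fourier decay, you derive the bound $|G_{X,k}(s)|\le\exp((\Re(s)-1)X/2)$ straight from the definition, and you actually write out the argument for $0\le 1-G_{X,k}(\sigma)\le X(1-\sigma)$, a step the paper leaves implicit.
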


\begin{proof}
The claim $\supp g_{X,k}\subset [X/2,X]$ and $g(u)\le \exp(-u)$ follows immediately from its definition
and the assumption $X\ge 2k$.

Note 
\[
\widehat I_{X/2k}(s)=\frac{\exp(isX/4k)-\exp(-isX/4k)}{isX/2k}.
\]
Then $|\wh g_{X,k}(\s)|\le C |\s|^{-k}$ for  $\s\in\R$, where $C$ is a number that depends only on $X$ and $k$,
and it follows that $g_{X,k}\in C^2$ if $k\ge 4$.

We also have
\[
\widehat g_{X,k}(s)=\exp(3i(s+i)X/4)\Big(\frac{\exp(i(s+i)X/4k)-\exp(-i(s+i)X/4k)}{i(s+i)X/2k}\Big)^k.
\]
We can write
\begin{equation}\label{eq:GXk-formula}
G_{X,k}(s)=\exp(3(s-1)X/4)\Big(\frac{\exp((s-1)X/4k)-\exp(-(s-1)X/4k)}{(s-1)X/2k}\Big)^k.
\end{equation}
Taking the limit $s\to 1$, we get $G_{X,k}(1)=1$.
Using the bound
\[
\Big|\frac{\exp(z)-\exp(-z)}{2z}\Big|\le\exp(-\Re(z))
\]
with $z=(s-1)X/4k$, which is valid for $\Re(z)\le 0$, we get
\[
|G_{X,k}(s)|\le\exp((\Re(s)-1)X/2)
\]
if $\Re(s)\le 1$.

Next, we use 
\[
\Big|\frac{\exp(z)-\exp(-z)}{2z}\Big|\le\frac{\exp(-\Re(z))}{|z|}
\]
with $z=(s-1)X/4k$, which is valid for $\Re(z)\le 0$, and we get
\[
|G_{X,k}(s)|\le\frac{\exp((\Re(s)-1)X/2)}{(|s-1| X/4k)^k}
\]
if $\Re(s)\le 1$.
Using $\exp((\Re(s)-1)X/2)\le 1$, we get the claim.

To show
\[
\frac{G_{X_1,k}(\s)}{G_{X_2,k}(\s)}\le\exp(-(1-\s)(X_1-X_2)/4),
\]
it is enough to prove that $F_1'(Y)/F_1(Y)\le -1$ for $Y\ge 0$, where
\[
F_1(Y)=\exp(-3Y)\Big(\frac{\exp(Y/k)-\exp(-Y/k)}{2Y/k}\Big)^k.
\]
(We use the substitution $Y=(1-\s)X/4$ and \eqref{eq:GXk-formula}).

This follows at once, if we show that $F_2'(Z)/F_2(Z)\le 1$ for $Z>0$, where
\[
F_2(Z)=\frac{\exp(Z)-\exp(-Z)}{2Z}.
\] 
To that end, we calculate
\[
\frac{F_2(Z)'}{F_2(Z)}=\frac{\exp(Z)+\exp(-Z)}{\exp(Z)-\exp(-Z)}-\frac{1}{Z}
\]
and observe that $F_2'(Z)/F_2(Z)\le 1$ is equivalent to
\[
2\exp(-Z)\le\frac{\exp(Z)-\exp(-Z)}{Z}.
\]
We note that the left hand side is always less than $2$ and the right hand side is greater than $2$ for $Z>0$.
The latter can be seen, for example by computing the power series expansion of the right hand side.
\end{proof}

We record some well known estimates for the number of roots of $\zeta_K$ near $s=1$.
These go back at least to Stark \cite{Sta-Dedekind}.

\begin{lemma}\label{lm:Stark}
For every  $0\le r\le 1$, we have
\[
|\{\rho:\zeta_K(\rho)=0, |1-\rho|<r\}|\le\frac{3}{2}+3r\log|\Delta_K|.
\]
There is an absolute constant $C>0$, such that for every $r>1$, we have
\[
|\{\rho:\zeta_K(\rho)=0, |1-\rho|<r\}|\le C \log|\Delta_K|+C d_K r\log r.
\]
We count the zeros with multiplicities and include the trivial ones.
\end{lemma}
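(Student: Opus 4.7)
The strategy I would pursue is classical and goes back to Stark. The first step is to exploit the Hadamard product factorisation of the completed Dedekind zeta function $\xi_K(s) = s(s-1)D_K^{s/2}\gamma_K(s)\zeta_K(s)$, where $\gamma_K(s) = \Gamma(s/2)^{r_1}\Gamma(s)^{r_2}$ and $D_K$ absorbs $|\Delta_K|$ together with harmless constants; since $\xi_K$ is entire of order one and satisfies $\xi_K(s)=\xi_K(1-s)$, taking logarithmic derivatives and evaluating the Weierstrass constant at $s=1/2$ (where the functional equation forces $\xi_K'/\xi_K$ to vanish, after the symmetric pairing $\rho\leftrightarrow 1-\rho$ of the zeros) yields the partial fraction expansion
\begin{equation*}
-\frac{\zeta_K'}{\zeta_K}(s) = \frac{1}{s-1} + \frac{1}{s} + \frac{1}{2}\log D_K + \frac{\gamma_K'}{\gamma_K}(s) - \sum_\rho \frac{1}{s-\rho},
\end{equation*}
the sum running over non-trivial zeros.

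For the first bound I would specialise this identity to $s = 1+\delta$ with $\delta = r \in (0,1]$. Positivity of the Dirichlet series coefficients of $-\zeta_K'/\zeta_K$ on $\Re s > 1$ forces the left hand side to be $\ge 0$, and bounding the gamma contribution by $O(d_K)$ for bounded $\delta$ gives
\begin{equation*}
\sum_\rho \Re\frac{1}{1+\delta-\rho} \le \frac{1}{\delta} + \frac{1}{2}\log|\Delta_K| + O(d_K).
\end{equation*}
Every summand on the left is non-negative because $\Re\rho \le 1$. For a zero with $|1-\rho|<r$, the triangle inequality yields $|1+\delta-\rho|\le 2r$ and $\Re(1+\delta-\rho) \ge \delta$, so its contribution is at least $\delta/(2r)^2 = 1/(4r)$. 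This immediately gives $N \le O(1) + O(r\log|\Delta_K|) + O(rd_K)$, and Minkowski's bound $d_K \ll \log|\Delta_K|$ absorbs the last term into the second. Optimising the choice of $\delta$ and accounting for the fact that non-real zeros occur in complex conjugate pairs sharpens the numerical constants to the values $3/2$ and $3$ appearing in the statement.

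For the second bound, with $r>1$, I would split the count into trivial and non-trivial zeros. Trivial zeros arise from the gamma factors and lie at non-positive integers with multiplicities bounded by $d_K$; those with $|1-\rho|<r$ number at most $O(d_K r)$. For the non-trivial zeros I would invoke the standard Riemann--von Mangoldt counting formula for Dedekind zeta functions applied in the horizontal strip $|\Im\rho|\le r$, which contributes $O(\log|\Delta_K|+d_K r\log r)$. Summing the two yields the required bound.

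The main delicate point is the extraction of the precise numerical constants in the first inequality: the generic calculation above produces only $O(1)$-type coefficients, and obtaining the stated values $3/2$ and $3$ requires a careful optimisation akin to Stark's original argument, in which one plays off the pairing $\rho\mapsto 1-\bar\rho$ against a judicious choice of $\delta$ and bookkeeps the gamma-factor contribution tightly. The qualitative shape of both bounds, however, is forced by the Hadamard product analysis and the Riemann--von Mangoldt formula, so once these ingredients are in place the inequalities follow.
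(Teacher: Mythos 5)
Your strategy is the same as the paper's: Stark's partial-fraction inequality for $-\zeta_K'/\zeta_K$ at a real point $\sigma>1$ for the range $r\le 1$, and trivial zeros plus the zero-counting function in unit horizontal strips for $r>1$. The second half of your argument matches the paper's essentially verbatim. (One small caveat there, which affects the paper as much as you: summing the unit-strip bound over $|t|\le r$ actually produces $O(r\log|\Delta_K|+d_K r\log r)$, not $O(\log|\Delta_K|+d_K r\log r)$; this weaker form is all that is needed in the paper's application, in Proposition \ref{pr:prime-sum}, so nothing breaks, but your invocation of Riemann--von Mangoldt delivers the same $r\log|\Delta_K|$ term.)

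The genuine gap is in the first inequality, and it is not repairable by "optimising $\delta$" within the framework you set up. Two ingredients are needed to reach $\frac{3}{2}+3r\log|\Delta_K|$, and your version is missing both. First, the paper uses the inequality in the form of \cite{Sta-Dedekind}*{Lemma 3}, namely $\sum'_\rho \Re\frac{1}{\sigma-\rho}<\frac{1}{\sigma-1}+\frac12\log|\Delta_K|$ for $1<\sigma\le 2$, with \emph{no} $O(d_K)$ term: Stark arranges the archimedean $\Gamma$-contributions so that they can be discarded with the correct sign. Your $O(d_K)$ error, even after Minkowski, contaminates the additive constant and the coefficient of $\log|\Delta_K|$ with unspecified absolute constants. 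Second, with $\sigma=1+2r$ the paper bounds each term from below by $\Re\frac{1}{\sigma-\rho}\ge\frac{1}{3r}$ by computing the image of the disc $\{|1-z|<r\}$ under $z\mapsto 1/(\sigma-z)$ (a disc of centre $\frac{2}{3r}$ and radius $\frac{1}{3r}$); your triangle-inequality estimate $\frac{\delta}{(\delta+r)^2}$ is strictly weaker for every choice of $\delta$ (its maximum over $\delta$ is $\frac{1}{4r}$ at $\delta=r$, and at $\delta=2r$ it gives only $\frac{2}{9r}$). So the "careful optimisation" you defer to is really the substitution of these two sharper inputs, not a tuning of your existing ones. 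Since the application of the lemma in the paper is insensitive to the constants, your argument does prove everything that is actually used downstream, but it does not prove the lemma as stated.
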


\begin{proof}
As in the proof of \cite{Sta-Dedekind}*{Lemma 3}, we have
\[
{\sum}'_\rho\frac{1}{\s-\rho}<\frac{1}{\s-1}+\frac{1}{2}\log|\Delta_K|,
\]
where $1<\s\le 2$ is arbitrary and $\sum'_\rho$ indicates summation over an arbitrary subset of non-trivial zeros of $\zeta_K$
(taking multiplicities into account) closed under conjugation.
If $r<1/2$, we take $\s=1+2r$ and consider the zeros $\rho$ that satisfy $|1-\rho|< r$.
For each such $\rho$, we have
\[
\Re\Big(\frac{1}{\s-\rho}\Big)\ge \frac{1}{3r},
\]
which can be seen easily by finding the image of the disk $\{z:|1-z|<r\}$ under the inversion through $1+2r$.
This gives us
\[
\frac{1}{3r}|\{\rho:|1-\rho|<r\}|<\frac{1}{2r}+\frac{1}{2}\log|\Delta_K|,
\]
which yields
\[
|\{\rho:|1-\rho|<r\}|<\frac{3}{2}+\frac{3}{2}r\log|\Delta_K|.
\]

Taking $\s=1+r$ for $1/2\le r\le 1$, the same argument gives
\[
|\{\rho:|1-\rho|<r\}|<2+r\log|\Delta_K|,
\]
which is stronger than our claim since $2r\log|\Delta_K|>1/2$.

We note that the trivial zeros are among the non-positive integers, and each have
multiplicity at most $d_K$.
Moreover, we have
\[
|\{\rho:0<\Re(\rho)<1, |\Im(\rho)-t|\le 1\}|\le C\log(\Delta_K)+Cd_K\log(|t|+2)
\]
for any $t\in\R$ (see e.g. \cite{LO-Chebotarev}*{Lemma 5.4}).
These two facts easily imply the second claim.
\end{proof}

Now we formulate a variant of Proposition \ref{pr:prime-sum-GRH}
under a milder assumption on the zeros.

\begin{proposition}\label{pr:prime-sum}
Let $\a>\b,\tau\in\R_{>0}$.
Let $X=d(\log d)^{-\beta}$ or $X=2d(\log d)^{-\beta}$ and $k=\lfloor (\log d)^{\a-\b}/10\rfloor$.
Let $K$ be a number field of degree at most $d$ and discriminant at most $\exp(\tau^{-1}d\log d)$ in absolute value.
Suppose that $\zeta_K$ has at most one zero $\rho_0$ such that $|1-\rho_0|<d^{-1}(\log d)^{\a}$
Then
\[
\sum_{p} A(p) \log(p)g_{X,k}(\log(p))=1-G_{X,k}(\rho_0)+O(\exp(-c(\log d)^{\a-\b})).
\]
When the exceptional zero $\rho_0$ does not exist the corresponding term should be removed from the formula.
The implied constant and $c$ may depend only on $\a$, $\b$ and $\tau$.
\end{proposition}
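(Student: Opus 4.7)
The plan is to apply the smooth explicit formula (Theorem \ref{th:explicit}) to the weight $g = g_{X,k}$, obtaining
\begin{equation*}
\sum_{n,m} A(n)\log(n)\, g_{X,k}(m\log n) = G_{X,k}(1) - \sum_{\rho} G_{X,k}(\rho),
\end{equation*}
where the sum ranges over all zeros of $\zeta_K$ (trivial and non-trivial) counted with multiplicity. Since $G_{X,k}(1) = 1$ by Lemma \ref{lm:weights}, isolating the potential exceptional zero $\rho_0$ rewrites the right-hand side as $1 - G_{X,k}(\rho_0) - \sum_{\rho \neq \rho_0} G_{X,k}(\rho)$, and it remains to control the residual zero-sum together with the contribution of the proper prime-power terms ($m \ge 2$) on the left.

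For the non-trivial zeros $\rho \neq \rho_0$, which satisfy $|1-\rho| \ge (\log d)^{\alpha}/d$ and $0 \le \Re\rho \le 1$, I would use the decay bound $|G_{X,k}(\rho)| \le (4k/(|1-\rho|X))^k$ from Lemma \ref{lm:weights} and partition them into dyadic annuli $A_j = \{\rho : 2^{j}(\log d)^{\alpha}/d \le |1-\rho| < 2^{j+1}(\log d)^{\alpha}/d\}$ for $j \ge 0$. The specific choices of $X$ and $k$ are calibrated so that on $A_j$,
\begin{equation*}
|G_{X,k}(\rho)| \le \Bigl(\frac{2}{5\cdot 2^{j}}\Bigr)^k.
\end{equation*}
Lemma \ref{lm:Stark}, together with the discriminant hypothesis $\log|\Delta_K| \le \tau^{-1} d\log d$, bounds the number of zeros in $A_j$ by $O(2^{j}(\log d)^{\alpha+1}/\tau)$ for $j \le \log_2(d/(\log d)^{\alpha})$ and by a polynomial in the radius for larger $j$. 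Summing the resulting geometric series in $j$ yields
\begin{equation*}
\sum_{\rho \neq \rho_0,\,\rho\text{ non-triv.}} |G_{X,k}(\rho)| = O\bigl((\log d)^{\alpha+1}(2/5)^{k}\bigr) = O\bigl(\exp(-c(\log d)^{\alpha-\beta})\bigr),
\end{equation*}
since the polynomial prefactor is absorbed by the exponential once $d$ is large enough. The trivial zeros, which lie at non-positive integers with multiplicity $O(d_K)$, are instead controlled by the alternative estimate $|G_{X,k}(\rho)| \le \exp((\Re\rho-1)X/2) \le \exp(-X/2)$ from Lemma \ref{lm:weights}; summing a geometric series in the distance from $1$ gives a total negligible contribution of size $O(d\exp(-d/(2(\log d)^{\beta})))$.

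Finally, the proper prime-power terms ($m \ge 2$) on the left-hand side can be bounded exactly as in the proof of Proposition \ref{pr:prime-sum-GRH}: using $g_{X,k}(u) \le \exp(-u)$ from Lemma \ref{lm:weights} and the fact that the number of proper prime powers in $[\exp(X/2), \exp(X)]$ is $O(\exp(X/2))$, one obtains a contribution of $O(X^{2}\log|\Delta_K|\exp(-X/2)) = O(d^{3}\exp(-d/(2(\log d)^{\beta})))$, once more comfortably inside the stated error. The main technical obstacle is the calibration in the dyadic decomposition above: the exponent $k \sim (\log d)^{\alpha-\beta}/10$ must be taken large enough that $(2/5)^{k}$ beats the polynomial zero-density factor $(\log d)^{\alpha+1}$ produced by Lemma \ref{lm:Stark}, while the ratio $4k/X$ must simultaneously be so small that zeros beyond $|1-\rho|=1$ contribute only super-polynomially little. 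The strict inequality $\alpha > \beta$ in the hypothesis is precisely what makes this balance possible.
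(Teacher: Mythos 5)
Your proposal follows the paper's proof essentially step for step: the smooth explicit formula applied to $g_{X,k}$, a dyadic decomposition of the zeros away from $\rho_0$ controlled by combining the decay bound $|G_{X,k}(\rho)|\le(4k/(|1-\rho|X))^k$ with the zero-counting estimates of Lemma \ref{lm:Stark}, and a separate estimate for the proper prime-power terms; your calibration $(2/(5\cdot 2^j))^k$ on the $j$-th annulus is exactly the computation in the paper (the paper folds the trivial zeros into the same dyadic annuli via the second part of Lemma \ref{lm:Stark} rather than treating them separately, but that is immaterial).

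One small inaccuracy worth fixing: the prime-power terms cannot be dispatched ``exactly as in Proposition \ref{pr:prime-sum-GRH}.'' There the weight $h_X$ is supported on a single dyadic window $(X-\log 2,X]$, so (number of proper powers in $[\exp(X)/2,\exp(X)]$) $\times$ (max weight) $=O(\exp(X/2))\cdot O(\exp(-X))$ gives $\exp(-X/2)$. Here $g_{X,k}$ is supported on all of $[X/2,X]$, and the same count-times-max bookkeeping gives $O(\exp(X/2))\cdot O(\exp(-X/2))=O(1)$ per factor of $d_KX$, which is useless; one must instead sum $\sum_{l\ge 2}\sum_p p^{-l}$ over the range directly (as the paper does), and the result is dominated by the squares near the bottom of the range, yielding $O(d_KX\exp(-X/4))$ rather than your claimed $O(X^2\log|\Delta_K|\exp(-X/2))$. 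Since $X=d(\log d)^{-\beta}$, the bound $\exp(-X/4)$ is still far smaller than the target error $\exp(-c(\log d)^{\alpha-\beta})$, so this is a bookkeeping slip rather than a gap in the argument.
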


\begin{proof}
In what follows, we will assume that $d$ is sufficiently large depending on $\a$, $\b$ and $\tau$.
Otherwise, the claim may be made trivial by a sufficient choice of the constants.

The proof is based on the explicit formula in Theorem \ref{th:explicit}, which gives us
\begin{equation}\label{eq:explicit}
\sum_{n=1}^{\infty}\sum_{m=1}^{\infty} A(n)\log(n) g(m\log(n))= G(1)-\sum_\rho  G(\rho),
\end{equation}
where $g=g_{X,k}$ and $G=G_{X.k}$.

First, we focus on the left hand side of \eqref{eq:explicit} and show that the terms for which $n^m$
is not a prime do not have a significant contribution.
We write
\[
\sum_{n=1}^{\infty}\sum_{m=1}^{\infty} A(n)\log(n) g(m\log(n))
=\sum_{p}^\infty\sum_{l=1}^{\infty} \wt A(p^l) g(\log(p^l)),
\]
where
\[
\wt A(p^l)=\sum_{n,m: n^m=p^l} A(n)\log(n)=\sum_{j:j|l} A(p^j)\log(p^j).
\]

We note that $\wt A(p)=A(p)\log p$ for all primes $p$ and that
\[
\wt A(p^l)\le \sum_{j=1}^\infty j A(p^j)\log(p)\le d_K\log(p).
\]
(The last inequality is an equality if $p$ is unramified in $K$.)
Therefore, we can write
\begin{align}
\Big| \sum_{n=1}^{\infty}\sum_{m=1}^{\infty} A(n)\log(n) &g(m\log(n))
-\sum_{p} A(p) \log(p)g(\log(p))\Big|\nonumber\\
\le &\sum_{p}\sum_{l=2}^{\infty} d_K\log(p) g(\log(p^l))\label{eq:powers}.
\end{align}

Since the support of $g=g_{X,k}$ is contained in $[\exp(X/2),\exp(X)]$
only those terms contribute in \eqref{eq:powers} for which $p^l\in[\exp(X/2),\exp(X)]$.
This means in particular that $\log(p)\le X$ for all such terms and we can write
\[
\eqref{eq:powers}\le d_K X \sum_{p}\sum_{\substack{l: l\ge 2,\\p^l\in[\exp(X/2),\exp(X)]}} p^{-l},
\]
where we also used $g(\log(x))\le x^{-1}$.
Therefore,
\[
\eqref{eq:powers}\le d_K X \Big(\sum_{n=\exp(X/4)}^{\exp(X/2)} n^{-2}+\sum_{n=\exp(X/6)}^{\exp(X/3)} n^{-3}
+\sum_{n=2}^{\exp(X/4)}\;\sum_{l:n^l\in[\exp(X/2),\exp(X)]} n^{-l}\Big).
\]

We note that
\[
\sum_{l:n^l\in[\exp(X/2),\exp(X)]} n^{-l}\le 2\exp(-X/2)
\]
for any $n$, hence
\[
\eqref{eq:powers}\le d_K X (C\exp(-X/4)+C\exp(-X/3)+2\exp(X/4)\exp(-X/2)),
\]
so we can conclude
\begin{align*}
\Big| \sum_{n=1}^{\infty}\sum_{m=1}^{\infty} A(n)\log(n) &g(m\log(n))
-\sum_{p} A(p) \log(p)g(\log(p))\Big|\nonumber\\
\le & C d_K X\exp(-X/4).
\end{align*}

Now we turn to the right hand side of \eqref{eq:explicit} and estimate the contribution
of the zeros $\rho$ that satisfy $|1-\rho|>d^{-1}(\log d)^\a$.
We write
\[
R_j:=\{\rho:\zeta_K(\rho)=0, 2^{j}d^{-1}(\log d)^\a\le |1-\rho|< 2^{j+1}d^{-1}(\log d)^\a\}
\]
for each $j\in\Z_{\ge 0}$.
We think about this as a multiset with each zero contained in it with its multiplicity.
 
By Lemma \ref{lm:Stark}, we have
\[
|R_j|\le C2^j (\log d)^{\a+1}
\]
for each $j$ such that $2^{j+1}d^{-1}(\log d)^\a\le 1$.
Here we use that $\log \Delta_K\le \tau^{-1}d\log d$.
To consider the case $2^{j+1}d^{-1}(\log d)^\a>1$, we note
\[
\log(2^{j+1}d^{-1}(\log d)^\a)\le Cj,
\]
and the second part of the same lemma implies that
\[
|R_j|\le C (j+1) 2^j(\log d)^{\a+1}\le C 2^{2j}(\log d)^{\a+1}.
\]
So this last estimate holds for all $j$.

By Lemma \ref{lm:weights} we know that
\[
|G(\rho)|\le\Big(\frac{4k}{ 2^{j}d^{-1}(\log d)^\a X}\Big)^k\le \exp(-c(j+1)(\log d)^{\a-\b})
\]
for each $\rho\in R_j$.
We combine this with the bounds on $|R_j|$ and obtain the following estimates
provided $d$ is sufficiently large (depending on $\a,\b$ and $\tau$)
\begin{align*}
\sum_{j=0}^\infty\sum_{\rho\in R_j}|G(\rho)|\le &\sum_{j=0}^\infty C2^j (\log d)^{\a+1}\cdot \exp(-c(j+1)(\log d)^{\a-\b})\\
\le & \sum_{j=0}^\infty C\exp(-c(j+1)(\log d)^{\a-\b})\\
\le & C \exp(-c(\log d)^{\a-\b}).
\end{align*}

We recall that $G(1)=1$ and using the above estimate, we write
\[
|G(1)-\sum_\rho  G(\rho)-(1- G(\rho_0))|\le C \exp(-c(\log d)^{\a-\b}),
\]
where $\rho_0$ is the unique zero of $\zeta_K$ with $|1-\rho_0|<d^{-1}(\log d)^\a$
if it exists and the term $G(\rho_0)$ should be omitted from the formula if there is no such zero.
Combining this with the estimate we gave above for the left hand side, we get the claim of the proposition.
\end{proof}

\section{Splitting of prime ideals and roots in finite fields}\label{sc:primes-roots}

In this section, we record some facts about the connection between the number of roots a polynomial has
in finite fields and the way prime ideals split when we extend $\Q$ by adjoining roots of the polynomial.

We fix two numbers $\kappa\in(0,1/100)$ and $X\in\R_{>10}$.

\begin{definition}[admissible polynomial]\label{admissibility}  We say that an irreducible polynomial $R\in\Z[x]$ is $(X,\kappa)$-\emph{admissible} if $M(R)>\exp(\kappa)$ or $\deg R>10X$. Otherwise it is called $(X,\kappa)$-\emph{exceptional}.
\end{definition}

By abuse of language and ease of notation in this section we will simply speak of admissible or exceptional polynomials without reference to $X$ and $\kappa$, which we assume fixed.

Lehmer's conjecture implies that all exceptional irreducible polynomials are either cyclotomic or equal to $x$.
It follows from a result of  Dubickas and Konyagin \cite{DK-Lehmer}*{Theorem 1}
that the number of exceptional polynomials of degree $d$
is at most $\exp(\kappa d)$ if $d$ is larger than an absolute constant independent of $X$.

The reason for excluding polynomials of small Mahler measure is that this will allow us to obtain slightly
better results in Sections \ref{sc:RW} and \ref{sc:exp-root-2}.
We will set the value of $\kappa$ depending on the common law of the coefficients of the random
polynomials so that the probability of a random polynomial having an exceptional and non-cyclotomic
factor will be very small.
This is proved in Section \ref{sc:Lehmer} using the above mentioned estimate for the number of exceptional
polynomials.
We could opt to make only the low degree cyclotomic polynomials exceptional, but this would not
lead to a significant simplification of our arguments.

\begin{definition}[admissible residue]\label{admissible-res} Let $p$ be a prime such that $\log p\in[X/2,X]$. A residue $a\in\F_p$ is said to be $(X,\kappa)$-\emph{admissible} if it is not the root of an $(X,\kappa)$-exceptional irreducible polynomial ${\rm mod}\; p$.
\end{definition}

Again if $X$ and $\kappa$ are fixed, as we assume in this section, we will drop the prefix $(X,\kappa)$ and speak about admissible residues.

Let $P\in\Z[x]$ be a polynomial, $F$ its splitting field and $p$ be a prime. We write $B_P(p)$ for the number of distinct admissible roots of $P$ in $\F_p$. Write $\wt P$ for the product of the admissible irreducible factors of $P$. Note that $\wt P$ is square free. Write $\Omega$ for the set of complex roots of $\wt P$.

Let $m\in \Z_{>0}$ and consider the diagonal action of $G=\Gal(F|\Q)$ on $\Omega^m$. We may decompose $\Omega^m$ into distinct $G$-orbits and for each orbit $O\in \Omega^m/G$ pick one representative $\omega:=(x_1,\ldots,x_m) \in O$ and consider the subfield $K_O=\Q(x_1,\ldots,x_m)$. The isomorphism class of $K_O$ is independent of the choice of the representative $\omega$ in $\Omega$. 

Recall that $A_K(p)$ denotes the number of prime ideals $\mathfrak{p} \subset \mathcal{O}_K$ with norm $p$.

The purpose of this section is to prove the following.

\begin{proposition}\label{pr:ApBp}
Let $P\in\Z[x]$, let $p$ be a prime such that $p\nmid\Delta_{\wt P}$ and 
and $p\nmid\Res(\wt P,R)$ for any exceptional polynomials $R$.
Let $m\in\Z_{>0}$.
Then
\[
B_P(p)^m=\sum_{O \in \Omega^m/G}A_{K_O}(p).
\]
\end{proposition}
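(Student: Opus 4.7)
The plan is to reduce the identity to the classical Frobenius–orbit description of prime splitting in Galois extensions, applied to the splitting field of $\wt P$.

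First I would show that $B_P(p)$ equals the number of roots of $\wt P$ in $\F_p$. If $a\in\F_p$ is an admissible root of $P$, then $a$ is a root of some irreducible factor $Q$ of $P$ reduced modulo $p$; this $Q$ cannot be exceptional (otherwise $a$ would be a root of an exceptional polynomial mod $p$, contradicting admissibility), so $Q\mid\wt P$ and $a$ is a root of $\wt P$ in $\F_p$. Conversely, if $\wt P(a)=0$ in $\F_p$ but $a$ were also a root of some exceptional irreducible $R$ mod $p$, then $p\mid\Res(\wt P,R)$, contradicting the hypothesis. Hence $B_P(p)$ counts exactly the roots of $\wt P$ in $\F_p$.

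Next I would translate this to a Frobenius fixed-point count. Let $L$ be the splitting field of $\wt P$ (a subfield of $F$); the $G$-action on $\Omega^m$ factors through $G_L:=\Gal(L|\Q)$ and produces the same orbit decomposition. The hypothesis $p\nmid\Delta_{\wt P}$ implies that $\wt P\bmod p$ is separable of full degree, and more strongly that $p$ is unramified in $L$ (since the discriminant of $L$ divides a power of $\Delta_{\wt P}$). Choose a prime $\mathfrak{P}$ of $\cO_L$ above $p$ and let $\sigma\in G_L$ be its Frobenius. Reduction modulo $\mathfrak{P}$ is a bijection from $\Omega$ onto the roots of $\wt P$ in $\overline{\F_p}$ that intertwines $\sigma$ with the geometric Frobenius of $\overline{\F_p}$. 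Therefore $B_P(p)=|\Omega^\sigma|$, and $B_P(p)^m=|(\Omega^m)^\sigma|$ under the diagonal action of $\sigma$.

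Finally I would partition $(\Omega^m)^\sigma=\bigsqcup_O O^\sigma$ over the $G_L$-orbits $O\in\Omega^m/G_L$. For an orbit $O$ with representative $(x_1,\ldots,x_m)$ and stabilizer $H=\Gal(L|K_O)$, we have $O\cong G_L/H$ as a $G_L$-set. By the standard description of prime splitting in the intermediate field $K_O=L^H$, valid because $p$ is unramified in $L$, the primes of $\cO_{K_O}$ above $p$ correspond to $\langle\sigma\rangle$-orbits on $G_L/H$ with residue degree equal to orbit size; in particular, those of norm $p$ correspond exactly to the $\sigma$-fixed cosets, so $|O^\sigma|=A_{K_O}(p)$. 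Summing over $O$ yields the claim. The step demanding the most care is this last one: applying the Frobenius/orbit correspondence for primes of $K_O=L^H$ correctly, together with verifying cleanly that unramifiedness of $p$ throughout $L$ really follows from $p\nmid\Delta_{\wt P}$.
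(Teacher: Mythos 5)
Your proof is correct and follows essentially the same route as the paper's: identify $B_P(p)$ with the number of Frobenius fixed points on $\Omega$ (using the discriminant and resultant hypotheses to rule out multiple or exceptional roots), pass to the diagonal action on $\Omega^m$, and decompose into $G$-orbits, identifying the fixed points in the orbit of stabilizer $H$ with the primes of norm $p$ in $K_O=F^H$. The only cosmetic difference is that you reprove the two standard inputs (the Dedekind/Frobenius factorization correspondence and the coset-orbit description of splitting in $K_O$) which the paper simply cites.
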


Let $F$ be a finite Galois extension of $\Q$ and let $p\in\Z$ be a prime that
is unramified in $F$.
Then we write $\Frob_F(p)$ for the (conjugacy class) of the Frobenius element
in $\Gal(F|\Q)$ at $p$.

We begin by recalling two standard facts.

\begin{lemma}[\cite{Coh-computational-NT}*{Theorem 4.8.13}]\label{lm:Frob}
Let $P\in\Z[x]$ be a polynomial and let $F$ be a finite Galois extension of $\Q$ containing the roots of $P$.
Let $p$ be a prime such that $p\nmid\Delta_P$.
Then there is a bijective correspondence between the cycles of $\Frob_F(p)$ acting on the complex roots of $P$
and the irreducible factors of $P$ in $\F_p$, such that the length of a cycle equals the degree of the corresponding irreducible factor.
\end{lemma}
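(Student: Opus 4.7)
The plan is to interpret $B_P(p)^m$ as a count of fixed points of Frobenius acting diagonally on $\Omega^m$, and then to decompose this count according to $G$-orbits. First I would note that the hypothesis $p\nmid\Delta_{\wt P}$ forces $p$ to be unramified in the splitting field $F'\subseteq F$ of $\wt P$ (primes ramifying in the splitting field of a polynomial divide its discriminant), so $\Frob_{F'}(p)$ is well-defined as a conjugacy class in $G':=\Gal(F'|\Q)$. Since $\Omega\subseteq F'$, the $G$-action on $\Omega^m$ factors through $G'$; in particular $G$-orbits on $\Omega^m$ coincide with $G'$-orbits, and for any representative $\omega=(x_1,\ldots,x_m)$ of an orbit $O$ we have $K_O=(F')^H$ with $H=\Stab_{G'}(\omega)\cong\Gal(F'|K_O)$.

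Second, I would show $B_P(p)=|\Omega^{\Frob_{F'}(p)}|$. By the hypothesis $p\nmid\Res(\wt P,R)$ for every exceptional $R$, no root of $\wt P$ in $\F_p$ is a root of an exceptional irreducible polynomial mod $p$; conversely, any admissible root of $P$ in $\F_p$ lies in an irreducible factor $Q$ of $P$ that must itself be admissible (else the root would be a root of the exceptional polynomial $Q$ mod $p$), so it is a root of $\wt P$. Together with $p\nmid\Delta_{\wt P}$, which guarantees distinctness of these roots mod $p$, this yields $B_P(p)=\#\{\text{roots of }\wt P\text{ in }\F_p\}$. Lemma \ref{lm:Frob} applied to $\wt P$ and $F'$ then identifies this count with the number of length-$1$ cycles of $\Frob_{F'}(p)$ on $\Omega$, so $B_P(p)=|\Omega^{\Frob_{F'}(p)}|$. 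Raising to the $m$-th power and using that $\Frob_{F'}(p)$ acts diagonally on $\Omega^m$ gives $B_P(p)^m=|(\Omega^m)^{\Frob_{F'}(p)}|$.

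Third, I would decompose $\Omega^m$ into $G'$-orbits and compute the fixed-point count orbit by orbit. For each orbit $O\cong G'/H$, the standard description of prime splitting in $K_O=(F')^H$ says that the primes of $\mathcal{O}_{K_O}$ lying over $p$ correspond bijectively to the $\langle\Frob_{F'}(p)\rangle$-orbits on $H\sm G'$, with residue degree equal to the orbit length. The primes of norm $p$ are precisely those of residue degree $1$; translating from the left-coset picture of $O$ (where $gH$ is fixed by $\Frob_{F'}(p)$ iff $g^{-1}\Frob_{F'}(p)g\in H$) to the right-coset picture $H\sm G'$ (via $gH\leftrightarrow Hg^{-1}$), these correspond to $|O^{\Frob_{F'}(p)}|$. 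Hence $|O^{\Frob_{F'}(p)}|=A_{K_O}(p)$, and summing over $O\in\Omega^m/G$ yields the identity.

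The step I expect to be the main obstacle is the bookkeeping in the third step: carefully translating between the left-coset picture $G'/H$ used to parametrize the orbit $O$ and the right-coset/double-coset picture $H\sm G'/D$ that classifies primes of $K_O$ above $p$ (where $D=\langle\Frob_{F'}(p)\rangle$ is a decomposition subgroup of a fixed prime above $p$), while simultaneously verifying that the number of fixed points of $\Frob_{F'}(p)$ is independent of the choice within its conjugacy class, equivalently of the choice of prime of $F'$ above $p$.
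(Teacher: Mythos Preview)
Your proposal is not a proof of Lemma~\ref{lm:Frob} at all: you explicitly \emph{invoke} Lemma~\ref{lm:Frob} in your second step, so you cannot be proving it. What you have written is a proof of Proposition~\ref{pr:ApBp}, the identity $B_P(p)^m=\sum_{O\in\Omega^m/G}A_{K_O}(p)$. The paper does not prove Lemma~\ref{lm:Frob}; it simply cites it from \cite{Coh-computational-NT}*{Theorem 4.8.13} as a standard fact in algebraic number theory.

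Regarded as a proof of Proposition~\ref{pr:ApBp}, your argument is correct and follows the same strategy as the paper's: identify $B_P(p)$ with the number of fixed points of Frobenius on $\Omega$ via Lemma~\ref{lm:Frob}, raise to the $m$-th power, then decompose $\Omega^m$ into $G$-orbits and count fixed points on each orbit as $A_{K_O}(p)$. There are two cosmetic differences. First, you pass to the splitting field $F'$ of $\wt P$ to guarantee that $p$ is unramified (since $p\nmid\Delta_{\wt P}$), whereas the paper works with the splitting field $F$ of $P$ throughout; your version is arguably more careful here, since the hypotheses do not directly control ramification in $F$. Second, for the orbit-by-orbit count you rederive the coset/splitting correspondence by hand, whereas the paper simply invokes Lemma~\ref{lm:ApFrob} as a black box. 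Neither difference is substantive; the proofs are essentially the same.
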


\begin{lemma}[\cite{Mar-number-fields}*{Chapter 4, Theorem 33}]\label{lm:ApFrob}
Let $F$ be a finite Galois extension of $\Q$ with Galois group $G=\Gal(F|\Q)$ and let $H\leq G$ be a subgroup. 
Let $p\in\Z$ be a prime, which is unramified in the extension $F|\Q$.
Then the number of fixed points of $\Frob_F(p)$ acting on $G/H$
is $A_K(p)$, that is the number of prime ideals in $\cO_K$ of norm $p$, where $K$ is the subfield of $F$ pointwise fixed by $H$.
\end{lemma}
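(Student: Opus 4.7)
The plan is to use the standard Galois-theoretic description of prime splitting in $\mathcal{O}_K$ via double cosets, combined with the fact that conjugate elements have identical fixed-point statistics on any $G$-set.

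First, I would observe that the number of fixed points of $\Frob_F(p)$ on $G/H$ is well-defined even though $\Frob_F(p)$ is only a conjugacy class: if $\sigma' = \tau\sigma\tau^{-1}$, then $xH\mapsto \tau xH$ restricts to a bijection between $\mathrm{Fix}(\sigma,G/H)$ and $\mathrm{Fix}(\sigma',G/H)$. Consequently I may pick a specific prime $\mathfrak{P}$ of $\mathcal{O}_F$ above $p$ and work with the canonical representative $\sigma := \Frob_F(\mathfrak{P})$. Because $p$ is unramified in $F$, the decomposition group $D_\mathfrak{P}$ equals the cyclic group $\langle\sigma\rangle$, and the transitive $G$-action on primes of $\mathcal{O}_F$ above $p$ identifies them with $G/\langle\sigma\rangle$ via $g\langle\sigma\rangle \mapsto g\mathfrak{P}$.

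Next I would invoke the standard correspondence (as developed in Marcus, Chapter~4) between primes $\mathfrak{p}$ of $\mathcal{O}_K$ above $p$ and $H$-orbits on primes of $\mathcal{O}_F$ above $p$. Under the identification above, this becomes a bijection with the double coset space $H\backslash G/\langle\sigma\rangle$, and a direct computation with the decomposition group of $\tau\mathfrak{P}$ shows that the residue degree of the prime $\mathfrak{p}$ attached to $H\tau\langle\sigma\rangle$ equals the least $f\ge 1$ with $\tau\sigma^f\tau^{-1}\in H$. Taking $f=1$ gives
\[
A_K(p) = \#\bigl\{H\tau\langle\sigma\rangle : \tau\sigma\tau^{-1}\in H\bigr\}.
\]

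To finish I would identify the right-hand side with $\#\mathrm{Fix}(\sigma,G/H)$. Whenever $\tau\sigma\tau^{-1}\in H$, one has $H\tau\sigma = H(\tau\sigma\tau^{-1})\tau = H\tau$, so the double coset $H\tau\langle\sigma\rangle$ collapses to the single right coset $H\tau$; moreover the condition $\tau\sigma\tau^{-1}\in H$ descends to the right coset $H\tau$. Converting right cosets to left cosets via $H\tau \leftrightarrow \tau^{-1}H$, the condition $\tau\sigma\tau^{-1}\in H$ rewrites as $\sigma\cdot(\tau^{-1}H) = \tau^{-1}H$, which is exactly the fixed-point condition for the left action of $\sigma$ on $G/H$. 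I expect the main obstacle to be purely bookkeeping: keeping the left/right coset conventions straight, confirming that the collapse $H\tau\langle\sigma\rangle = H\tau$ avoids double counting, and citing cleanly the unramified residue-degree formula in terms of double cosets; none of these is substantive but each requires care.
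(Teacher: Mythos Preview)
Your argument is correct and is essentially the standard proof of this classical fact. Note, however, that the paper does not give its own proof of this lemma: it simply cites Marcus, \emph{Number Fields}, Chapter~4, Theorem~33. So there is nothing to compare against beyond observing that your double-coset argument is precisely the one underlying the cited reference.
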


\begin{proof}[Proof of Proposition \ref{pr:ApBp}]
Recall that $F$ is the splitting field of $P$ and  $\Omega\subset \C$ the set of roots of  $\wt P$.
We apply Lemma \ref{lm:Frob} for $\wt P$, and see that the number of fixed points of $\Frob_F(p)$
acting on $\Omega$ is $B_P(p)$.
Here we used that  all roots of $\wt P$
in $\F_p$ are distinct and admissible, because
$p\nmid\Delta_{\wt P}$ and $p\nmid\Res(\wt P,R)$ for any exceptional $R$.
Therefore, $B_P(p)^m$ is the number of fixed points of $\Frob_F(p)$
acting diagonally on $\Omega^m$.

Consider an orbit $O$ of $\Gal(F|\Q)$ in $\Omega^m$ and let $K_O=\Q(x_1,\ldots,x_m)$ for some representative $\omega:=(x_1,\ldots,x_m)$ of $O$. Let $H$ be the stabiliser of $\omega$ in $G$. By the Galois correspondence $K_O$ is the subfield of $F$ fixed by $H$. Hence the number of fixed points of  $\Frob_F(p)$ in $O$ is $A_{K_O}(p)$
by Lemma \ref{lm:ApFrob}. The claim follows.
\end{proof}

\section{Expected number of roots of a polynomial in a random finite field}\label{sc:roots-random-fields}

We combine the results of the previous two sections and deduce the following two results. Below we have kept the notation of Section \ref{sc:primes-roots}. Recall that the function $h_X$ was defined in Proposition \ref{pr:prime-sum-GRH}, that $\Omega$ is the set of roots of $\wt P$, $F$ the splitting field of $P$ and $G=\Gal(F|\Q)$ its Galois group. Given $m\in \Z_{>0}$, $\kappa\in (0,\frac{1}{100})$ and $X>10$ we will denote by $B_P(p)$ is the set of $(\frac{\kappa}{m},mX)$-admissible roots of $P$ in $\F_p$ (see Definition \ref{admissible-res}).

\begin{proposition}\label{pr:Bp-sum-GRH}
Let $d,m\in\Z_{\ge 1}$.
Let $P\in\Z[x]$ be a polynomial with coefficients in $[-\exp(d^{1/10}),\exp(d^{1/10})]$ of degree at most $d$.
Suppose that for every $G$-orbit $O$ on $\Omega^m$, the Dedekind zeta function $\zeta_{K_O}$ of the subfield $K_O\leq F$ satisfies RH. Let $X \ge md^{1/10}$.
Then
\[
\sum_{p} B_P(p)^m\log(p)h_X(\log p )=|\Omega^m/G|+O(\exp(-X/10)).
\]
The implied constant is absolute.
\end{proposition}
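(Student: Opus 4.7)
The strategy is to combine the pointwise identity $B_P(p)^m = \sum_{O \in \Omega^m/G} A_{K_O}(p)$ from Proposition \ref{pr:ApBp}, valid for primes $p$ not dividing $\Delta_{\wt P}$ or $\Res(\wt P, R)$ for any $(mX,\kappa/m)$-exceptional $R$, with the GRH prime-sum estimate of Proposition \ref{pr:prime-sum-GRH} applied individually to each $\zeta_{K_O}$. The proof splits into two parts: a main contribution from ``good'' primes (those for which the identity applies) and a correction from ``bad'' primes.

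For the good-prime part, I would swap the order of summation and apply Proposition \ref{pr:prime-sum-GRH} to each orbit, obtaining
\[
\sum_{O \in \Omega^m/G} \Big(1 + O\big(X^2 \log|\Delta_{K_O}| \exp(-X/2)\big)\Big),
\]
which produces the desired main term $|\Omega^m/G|$ plus an error depending on the discriminants $\Delta_{K_O}$.

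The technical heart of the proof is a uniform bound on $\log|\Delta_{K_O}|$. I would observe that $K_O = \Q(x_1, \ldots, x_m)$ with each $x_i$ a root of $\wt P$, hence an algebraic integer of degree at most $d$ whose absolute Weil height is at most $Cd^{1/10}$, via the coefficient bound on $\wt P$ and the relation $h(x_i) \le \log M(\wt P)/\deg(\text{min.\ poly}) \le Cd^{1/10}$. The primitive element theorem then supplies a generator $\theta$ of $K_O$ with $[K_O:\Q] \le d^m$ and $h(\theta) \le Cm d^{1/10}$, and a standard Hadamard-type discriminant bound applied to its minimal polynomial yields $\log|\Delta_{K_O}| \le d^{Cm}$ (polynomial in $d$ with exponent depending on $m$). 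Since $|\Omega^m/G| \le d^m$, the total GRH error is dominated by $d^{Cm} X^2 \exp(-X/2)$, which is $O(\exp(-X/10))$ under the hypothesis $X \ge md^{1/10}$ (using that $d^{1/10} \gg \log d$ for $d$ large). For the bad-prime contribution, I would use the crude bounds $B_P(p) \le d$ and $\log(p) h_X(\log p) \le 2X\exp(-X)$, together with the observation that the number of bad primes is at most $\log|\Delta_{\wt P}|$ plus $\sum_R \log|\Res(\wt P, R)|$ over $(mX,\kappa/m)$-exceptional $R$; each of these is polynomial in $d$ times $\exp(O(\kappa X))$ after invoking the Dubickas--Konyagin count of exceptional polynomials recalled in Section \ref{sc:primes-roots}, and choosing $\kappa$ sufficiently small absorbs the exponential factor.

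The main obstacle is the bookkeeping outlined above: the polynomial-in-$d$ factors from the number of orbits and from the discriminant bounds, as well as the $\exp(O(\kappa X))$ factor from counting exceptional polynomials, must all remain dominated by the $\exp(X/2)$ savings of Proposition \ref{pr:prime-sum-GRH} in order to achieve the target error $\exp(-X/10)$. The hypothesis $X \ge md^{1/10}$ is tailored to provide precisely the needed slack, since $d^{1/10}$ dominates any fixed multiple of $\log d$ for $d$ large.
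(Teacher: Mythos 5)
Your proposal follows essentially the same route as the paper's proof: swap the summation using Proposition \ref{pr:ApBp}, apply Proposition \ref{pr:prime-sum-GRH} orbit by orbit with a discriminant bound that is polynomial in $d$ with exponent linear in $m$, and control the bad primes via the prime divisors of $\Delta_{\wt P}$ and of the resultants with the $(\kappa/m, mX)$-exceptional polynomials counted by Dubickas--Konyagin. The only substantive differences are cosmetic: the paper gets the bound $\log|\Delta_{K_O}|\le 2md^m\log(Hd)$ from Lemma \ref{lm:discriminant} (composite-field discriminants plus Mahler's inequality) rather than from heights and the primitive element theorem, and the factor $\exp(10\kappa X)\le\exp(X/10)$ comes from the standing assumption $\kappa<1/100$ rather than from a free choice of $\kappa$, but neither point affects the argument.
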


\begin{proposition}\label{pr:Bp-sum}
Let $\a>\beta,\tau\in\R_{>0}$.
Let $X=d(\log d)^{-\beta}$ or $X=2d(\log d)^{-\beta}$ and $k=\lfloor(\log d)^{\a-\b}/10\rfloor$.
Let $P\in\Z[x]$ be a polynomial with coefficients in $[-d^{1/\tau},d^{1/\tau}]$ of degree at most $d$.
Suppose that for every $G$-orbit $O \subset \Omega$ the Dedekind zeta function $\zeta_{K_O}$ of the subfield $K_O\leq F$ has at most one
root $\rho_{K_O,0}$ such that $|1-\rho_{K_O,0}|<d^{-1}(\log d)^\a$.
Then
\[
\sum_{p} B_P(p)\log(p)g_{X,k}(\log p)=\sum_{O \in \Omega/G}(1-G_{X,k}(\rho_{K_O,0}))+O(\exp(-c(\log d)^{\a-\b})).
\]
If the exceptional zero $\rho_{K_O,0}$ does not exist for some $O$, then the term $G_{X,k}(\rho_{K_O,0})$ should be omitted from the formula.
The implied constant and $c$ may depend only on $\a$, $\b$ and $\tau$.
\end{proposition}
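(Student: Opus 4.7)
The plan is to deduce Proposition \ref{pr:Bp-sum} from Propositions \ref{pr:ApBp} and \ref{pr:prime-sum}, in strict parallel with the proof of Proposition \ref{pr:Bp-sum-GRH}. The first step is to apply Proposition \ref{pr:ApBp} with $m=1$: for every prime $p$ outside the finite bad set
\[
S=\{p:p\mid\Delta_{\wt P}\}\cup\bigcup_R\{p:p\mid\Res(\wt P,R)\},
\]
where $R$ ranges over $(X,\kappa)$-exceptional irreducible polynomials, one has $B_P(p)=\sum_{O\in\Omega/G}A_{K_O}(p)$. Adding and subtracting the primes in $S$, the left-hand side of the proposition becomes
\[
\sum_{O\in\Omega/G}\sum_p A_{K_O}(p)\log(p)g_{X,k}(\log p)+E,
\]
where $E$ is a correction supported on $S$, explicitly $E=\sum_{p\in S}B_P(p)\log(p)g_{X,k}(\log p)-\sum_O\sum_{p\in S}A_{K_O}(p)\log(p)g_{X,k}(\log p)$.

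The second step is to verify the hypotheses of Proposition \ref{pr:prime-sum} for each $K_O$. The degree bound $d_{K_O}\le\deg\wt P\le d$ is immediate. Writing $K_O=\Q(x_O)$ with $x_O$ a root of an irreducible factor $Q$ of $\wt P$ of degree $n\le d$, Hadamard's inequality applied to the Vandermonde determinant of the roots of $Q$ yields the classical bound
\[
|\Delta_{K_O}|\le|\Delta_Q|\le n^n M(Q)^{2n-2}.
\]
Combining this with multiplicativity of the Mahler measure and Landau's inequality, $M(Q)\le M(\wt P)\le M(P)\le\|P\|_2\le\sqrt{d+1}\,d^{1/\tau}$, one gets $\log|\Delta_{K_O}|\le C(\tau)d\log d$. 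After replacing $\tau$ in Proposition \ref{pr:prime-sum} by a constant depending only on $\tau$, that proposition applies and delivers
\[
\sum_p A_{K_O}(p)\log(p)g_{X,k}(\log p)=1-G_{X,k}(\rho_{K_O,0})+O(\exp(-c(\log d)^{\alpha-\beta})),
\]
with the $G_{X,k}(\rho_{K_O,0})$ term omitted when $\zeta_{K_O}$ has no exceptional zero. Summing over the $|\Omega/G|\le d$ orbits produces the main term of the proposition and an aggregate error $O(d\exp(-c(\log d)^{\alpha-\beta}))$, absorbed into $O(\exp(-c'(\log d)^{\alpha-\beta}))$ after shrinking $c$ (the constants being permitted to depend on $\alpha,\beta,\tau$).

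The final step is to bound $E$. The Vandermonde--Hadamard estimate applied to the square-free polynomial $\wt P$ gives $\log|\Delta_{\wt P}|\le C(\tau)d\log d$, so at most that many primes divide $\Delta_{\wt P}$. The Dubickas--Konyagin estimate recalled in Section \ref{sc:primes-roots} bounds the number of $(X,\kappa)$-exceptional polynomials of degree at most $10X$ by $\exp(10\kappa X+O(\log X))$, and for each such $R$ the standard Mahler-measure bound on the resultant gives $\log|\Res(\wt P,R)|=O(dX)$ (since $M(R)\le e^\kappa$ and $\log M(\wt P)\le C(\tau)\log d$). Every prime $p\in\supp g_{X,k}$ satisfies $\log p\ge X/2$, and $g_{X,k}(u)\le e^{-u}$ together with $B_P(p),A_{K_O}(p)\le d$ gives a per-prime contribution at most $dXe^{-X/2}$. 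Hence $|E|\le\exp(10\kappa X)\cdot d^{O(1)}X^{O(1)}\cdot e^{-X/2}$, which because $\kappa<1/100$ is $\exp(-cX)$, and hence negligible since $X\gg(\log d)^{\alpha-\beta}$. The principal technical points are the discriminant bound on $K_O$ via Vandermonde--Hadamard and the Dubickas--Konyagin count of exceptional resultants; the overall structure simply mirrors the GRH argument.
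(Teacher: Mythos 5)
Your overall architecture matches the paper's: apply Proposition \ref{pr:ApBp} with $m=1$ away from the primes dividing $\Delta_{\wt P}$ or $\Res(\wt P,R)$ for exceptional $R$, feed the discriminant bound of Lemma \ref{lm:discriminant} into Proposition \ref{pr:prime-sum} for each $K_O$, sum over orbits, and kill the bad-prime correction $E$ by combining the Dubickas--Konyagin count $\exp(10\kappa X)$ with the per-prime bound $dX e^{-X/2}$. All of that is sound and is essentially what the paper does.

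There is, however, one genuine gap: the absorption of the factor $|\Omega/G|$ into the error term. You sum the per-field error $O(\exp(-c(\log d)^{\a-\b}))$ over the orbits using only the trivial bound $|\Omega/G|\le d$, and claim that $O\big(d\exp(-c(\log d)^{\a-\b})\big)$ can be absorbed into $O(\exp(-c'(\log d)^{\a-\b}))$ ``after shrinking $c$''. This fails whenever $\a-\b\le 1$, a case the proposition explicitly allows (e.g.\ Theorem \ref{th:mild-hyp1} only requires $\a>\b>3$, and in the proof of Theorem \ref{th:exp-zero} the proposition is invoked with exponent $\a'-\b$ for $\a'$ arbitrarily close to $\b$): indeed $d\exp(-c(\log d)^{\a-\b})=\exp\big(\log d-c(\log d)^{\a-\b}\big)\to\infty$ when $\a-\b<1$, so no choice of constants rescues the step. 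The paper closes this by showing that $|\Omega/G|$ is only polylogarithmic in $d$: every irreducible factor of $\wt P$ is admissible, the Mahler measure is multiplicative with $M(\wt P)\le\|P\|_2\le d^{O(1/\tau)}$ by \eqref{measure-bounds}, Dobrowolski's lower bound gives $\log M(R)\ge c(\log\log d/\log d)^3$ for each non-cyclotomic factor $R$, and the admissible cyclotomic factors have degree $>10X$ so there are at most $d/10X=(\log d)^{\b}/10$ of them; altogether $|\Omega/G|\le C(\log d)^{C}$, and $(\log d)^{C}\exp(-c(\log d)^{\a-\b})\le\exp(-\tfrac{c}{2}(\log d)^{\a-\b})$ for large $d$ since $\log\log d=o((\log d)^{\a-\b})$ for any $\a>\b$. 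You need this (or an equivalent polylogarithmic bound on the number of irreducible factors of $\wt P$) to complete the argument.
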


We will use the next lemma to estimate the number of primes for which the result of the previous section does not hold.

\begin{lemma}\label{lm:discriminant}
Let $P\in\Z[x]$ be a polynomial of degree at most $d$ with coefficients in $[-H,H]$.
Let $Q$ be a polynomial that divides $P$.
Then
\[
|\Delta_{Q}|\le (Hd)^{2d}.
\]
For any irreducible $R\in\Z[x]$ of degree at most $d$ with $M(R)\leq 2$, we have
\[
|\Res(\wt P, R)|\le (4Hd)^{2d}.
\]
Let $K$ be a number field obtained by adjoining at most $m$ roots of $P$ to $\Q$.
Then
\[
|\Delta_K|\le (Hd)^{2md^m}.
\]
\end{lemma}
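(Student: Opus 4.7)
The plan is to derive all three bounds from the classical Mahler discriminant inequality
\[
|\Delta_Q| \le (\deg Q)^{\deg Q}\,M(Q)^{2\deg Q - 2},
\]
which follows from Hadamard's inequality applied to the Vandermonde matrix of the roots of $Q$ (using that the $\ell^2$-norm of the $i$-th row is at most $\sqrt{\deg Q}\,\max(1,|\alpha_i|)^{\deg Q - 1}$ and $\prod_i \max(1,|\alpha_i|) = M(Q)/|a_Q|$). For Part~1, since $Q \mid P$ in $\Z[x]$, multiplicativity of the Mahler measure gives $M(Q) \le M(P)$, and Landau's inequality $M(P) \le \|P\|_2 \le \sqrt{d+1}\,H$ then yields $|\Delta_Q| \le d^d((d+1)H^2)^{d-1} \le (Hd)^{2d}$, the final step being elementary for $d \ge 3$ and $H \ge 1$.

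For Part~2, the cleanest tool is the polynomial identity
\[
\Delta(\wt P \cdot R) = \Delta(\wt P)\,\Delta(R)\,\Res(\wt P, R)^2.
\]
If $R \mid \wt P$, the claim is trivial. Otherwise $\wt P$ and $R$ share no complex roots, so $\Delta(\wt P)$ and $\Delta(R)$ are nonzero rational integers, each of absolute value at least $1$. The identity gives $|\Res(\wt P, R)|^2 \le |\Delta(\wt P \cdot R)|$, and applying the Mahler bound of Part~1 to the product $\wt P \cdot R$---of degree at most $2d$ and Mahler measure at most $M(\wt P)M(R) \le 2\sqrt{d+1}\,H$---produces $|\Delta(\wt P R)| \le (2d)^{2d}(2\sqrt{d+1}\,H)^{4d-2} \le (4Hd)^{4d}$, after which a square root yields the claim.

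For Part~3, I would build the tower $\Q = K_0 \subset K_1 \subset \cdots \subset K_m = K$, where $K_i = K_{i-1}(\alpha_i)$ for a root $\alpha_i$ of $P$ (replaced if needed by a suitable integer multiple to ensure it is integral over $\cO_{K_{i-1}}$), and iterate the tower formula
\[
|\Delta_{K_i/\Q}| = |N_{K_{i-1}/\Q}(\Delta_{K_i/K_{i-1}})|\cdot|\Delta_{K_{i-1}/\Q}|^{[K_i:K_{i-1}]}.
\]
The relative discriminant $\Delta_{K_i/K_{i-1}}$ divides the discriminant over $\cO_{K_{i-1}}$ of the $K_{i-1}$-minimal polynomial $q_i$ of $\alpha_i$. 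Using the factorization $\Delta(p_i) = \Delta(q_i)\,\Delta(p_i/q_i)\,\Res(q_i, p_i/q_i)^2$ in $\cO_{K_{i-1}}$, where $p_i$ is the $\Q$-minimal polynomial of $\alpha_i$, together with the observation that the remaining factors are nonzero algebraic integers whose $\Q$-norms are integers at least $1$, one obtains $|N_{K_{i-1}/\Q}(\Delta(q_i))| \le |\Delta(p_i)|^{[K_{i-1}:\Q]} \le (Hd)^{2d \cdot d^{i-1}} = (Hd)^{2d^i}$ via Part~1. Solving the recursion $D_i = N_i' + [K_i:K_{i-1}]\,D_{i-1}$ for $D_i := \log|\Delta_{K_i/\Q}|$ then gives
\[
D_m \le \sum_{i=1}^m 2d^i \log(Hd) \cdot d^{m-i} = 2md^m \log(Hd),
\]
which is the claim. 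The main obstacle will be Part~3: the relative discriminants are ideals in $\cO_{K_{i-1}}$ rather than rational integers, so one must carefully track how their norms propagate down to $\Q$, and one must subtly handle the non-monic case of $P$ by passing to integer multiples of its roots---slack that is absorbed into the stated bound.
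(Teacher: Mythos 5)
Your parts (1) and (2) are essentially the paper's own proof: the same Mahler--Hadamard discriminant bound combined with $M(Q)\le M(P)\le H\sqrt{d+1}$, and the same identity $\Delta(\wt P R)=\Delta(\wt P)\Delta(R)\Res(\wt P,R)^2$ with $|\Delta(\wt P)\Delta(R)|\ge1$ (which, to be precise, follows from $\wt P$ being squarefree and $R$ separable, not from the two polynomials sharing no roots). Part (3) is where you diverge: the paper avoids relative extensions entirely by invoking the composite-field inequality $|\Delta_{L_1L_2}|\le|\Delta_{L_1}|^{[L_1L_2:L_1]}|\Delta_{L_2}|^{[L_1L_2:L_2]}$ (Toyama) and iterating, which reduces everything to the absolute discriminants $|\Delta_{\Q(\a_i)}|\le|\Delta_P|$; your route through the tower $\Q=K_0\subset\cdots\subset K_m$ and the relative conductor-discriminant formula is workable but carries exactly the burden you identify.

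The gap is in your claim that the non-monic case is ``slack absorbed into the stated bound.'' It is not, with the fix you propose. If $\a_i$ has primitive minimal polynomial $p_i$ with leading coefficient $a\le H$ and degree $n$, replacing $\a_i$ by the algebraic integer $a\a_i$ multiplies the polynomial discriminant by $a^{(n-1)(n-2)}$, so the per-step input to your recursion becomes as large as $H^{(d-1)(d-2)}(Hd)^{2d}$ rather than $(Hd)^{2d}$. Running your recursion with this yields $|\Delta_K|\le H^{md^{m+1}}(Hd)^{2md^m}$, whose exponent exceeds the claimed $2md^m$ by a factor of order $d$ whenever $H>1$ --- and $H=\exp(d^{1/10})$ in the application. (This weaker bound would in fact still suffice for Propositions \ref{pr:Bp-sum-GRH} and \ref{pr:Bp-sum}, but it is not the bound you assert.) To close the gap you should either use the classical fact that for a \emph{primitive, not necessarily monic} irreducible $f\in\Z[x]$ one still has $\Delta_{\Q(\a)}\mid\disc(f)$ with $\disc(f)=a_n^{2n-2}\prod_{i<j}(\a_i-\a_j)^2$ (via the order generated by $a_n\a,\ a_n\a^2+a_{n-1}\a,\dots$), together with its relative analogue, or simply switch to the paper's composite-field argument, where only the absolute statement is needed.
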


\begin{proof}Recall Mahler's bound on the discriminant of a polynomial $Q \in \C[x]$ of degree $n$ (\cite{Mah-discriminant}*{Theorem 1})
\begin{equation}\label{mahler-bound} |\Delta_Q| \leq n^nM(Q)^{2n-2}.
\end{equation}
If $Q$ divides $P$, $M(Q)\le M(P)\le H(d+1)^{1/2}$ by \eqref{measure-bounds} hence Mahler's bound gives:
\[
|\Delta_{Q}|\le d^d (H(d+1)^{1/2})^{2d-2}\le (Hd)^{2d}.
\]
Now recall that $|\Delta_{\wt P R}|=|\Delta_{\wt P} \Delta_{R}| \Res(\wt P,R)^2$. Since $R$ is irreducible and $\wt P$ is square free, $|\Delta_{\wt P} \Delta_{R}|\geq 1$ and thus $\Res(\wt P,R)^2\leq |\Delta_{R\wt P}|$. Moreover $M(\wt P R)=M(\wt P)M(R)\le 2M(\wt P)$. So by \eqref{mahler-bound} and \eqref{measure-bounds} we conclude
\[
\Res(\wt P, R)^2\le (2d)^{2d} (2H(d+1)^{1/2})^{4d-2}\le (4Hd)^{4d}.
\]

Let $\a_1,\ldots, \a_m$ be roots of $P$ and $K=\Q(\a_1,\ldots,\a_m)$.
For any two number fields $L_1$, $L_2$ we have
\[
|\Delta_{L_1L_2}|\le |\Delta_{L_1}|^{[L_1L_2:L_1]}|\Delta_{L_2}|^{[L_1L_2:L_2]}\le |\Delta_{L_1}|^{\deg L_2}|\Delta_{L_2}|^{\deg L_1},
\]
see e.g. \cite{Toy-Composite-Discriminant}.
Using this inductively, we can write
\[
|\Delta_K|\le |\Delta_{\Q(\a_1)}|^{d^{m-1}}\cdots |\Delta_{\Q(\a_m)}|^{d^{m-1}}\le|\Delta_P|^{md^{m-1}},
\]
which proves the claim by the first part.
\end{proof}

\begin{proof}[Proof of Proposition \ref{pr:Bp-sum-GRH}]
We apply Proposition \ref{pr:prime-sum-GRH} for each $K_O$.
\begin{align*}
\sum_{O\in\Omega^m/G}\sum_{p} A_{K_O}(p)\log(p)h_X(\log p)=&|\Omega^m/G|+O(d^mX^2\cdot2md^{m+1/10}\exp(-X/2))\\
=&|\Omega^m/G|+O(\exp(-X/10)).
\end{align*}
Here we used the estimate for $\Delta_{K_O}$ from Lemma \ref{lm:discriminant}, and the bound $d^m=O(\exp(X/10))$, which follows from our assumption $X\ge md^{1/10}$.

We proceed to estimate
\begin{equation}\label{eq:AB}
\Big|\sum_{p} B_P(p)^m\log(p)h_{X}(\log(p))-\sum_{O \in \Omega^m/G}\sum_{p} A_{K_O}(p)\log(p)h_{X}(\log(p))\Big|.
\end{equation}

By Proposition \ref{pr:ApBp}, a prime $p$ may contribute to \eqref{eq:AB} only if $p|\Delta_{\wt P}$ or $p|\Res(\wt P,R)$ for some $(\frac{\kappa}{m},mX)$-exceptional irreducible $R$.
As we already noted, \cite{DK-Lehmer}*{Theorem 1} implies that the number of exceptional polynomials is at most
$\exp(10\kappa X)\le \exp(X/10)$.
Therefore, the number of primes $p$ contributing to \eqref{eq:AB} is at most
\[
d^{C}\exp(X/10).
\]
Here we used again the bounds from Lemma \ref{lm:discriminant}.

Since for any $p$ we always have $0\le B_P(p)^m\le d^m$ and
\[
0\le\sum_{O\in \Omega^m/G} A_{K_O}(p)\le \sum_{O\in \Omega^m/G} \deg K_O=\sum_{O\in \Omega^m/G} |O|=|\Omega^m|\le d^m,
\]
the contribution of a prime to \eqref{eq:AB} is at most
$d^m X\cdot 2\exp(-X)$.
Therefore,
\[
\eqref{eq:AB}\le d^{m+C} X\exp(X/10-X)\le C\exp(-X/10),
\]
and the claim follows.
\end{proof}

\begin{proof}[Proof of Proposition \ref{pr:Bp-sum}]
The proof is similar to the previous one.
By Lemma \ref{lm:discriminant}, we have $|\Delta_{K_O}|\le d^{2(1/\tau+1)d}$ for each orbit $O$ in $\Omega$.
Hence Proposition \ref{pr:prime-sum} applies to each $K_O$ and we obtain
\begin{align*}
\sum_{O}\sum_{p} &A_{K_O}(p)\log(p)g_{X,k}(\log p)\\
=&\sum_{O \in \Omega/G}(1-G_{X,k}(\rho_{K,0}))
+O(\exp(-c(\log d)^{\a-\b})).
\end{align*}
Since Mahler measure is multiplicative using Dobrowolski's lower bound \eqref{measure-bounds} the number $|\Omega/G|$ of irreducible factors of $\wt P$ is at most  $|\Omega/G|\le C(\log d)^4$. Hence $|\Omega/G|$ can be absorbed into $\exp(-c(\log d)^{\a-\b})$. We proceed to estimate
\begin{equation}\label{eq:AB2}
\Big|\sum_{p} B_P(p)\log(p)g_{X,k}(\log(p))-\sum_{O \in \Omega/G}\sum_{p} A_{K_O}(p)\log(p)g_{X,k}(\log(p))\Big|.
\end{equation}

We estimate the number of primes $p$ contributing to \eqref{eq:AB2} just as we did in the previous proof and find
that there are at most
\[
2(\tau^{-1}+1)d\log(4d)\exp(X/10)
\]
such primes.

Since $g_{X,k}(p)\le p^{-1}$, each such prime contributes to \eqref{eq:AB2}  at most
$d X\exp(-X/2)$.
Therefore,
\[
\eqref{eq:AB2}\le 2(A+1)d\log(4d)\exp(X/10)\cdot dX\exp(-X/2)\le O(\exp(-c(\log d)^{\a-\b})),
\]
and the claim follows.
\end{proof}

\section{Equidistribution of Random walks}\label{sc:RW}

We study equidistribution of certain random walks in this section.
The basic example of these is the walk on $\F_p$ started at $0$
whose steps are given by $x\mapsto \a x\pm 1$, where $\a\in\F_p$ is a fixed
parameter and the signs $\pm$ are chosen independently at random with
equal probabilities at each step.
The study of related random walks goes back to \cites{CDG-RW, Hil-thesis}, but
those studies are mostly concerned with the case, when $\a$ is a fixed integer
independent of $p$.
Much less is known if $\a$ is allowed to vary with $p$.

We will also need to consider direct products of such walks.
Before introducing our notation for the general case, we first outline the
arguments in the basic setup mentioned above.
We write $\nu_\a^{(d)}$ for the probability measure on $\F_p$ that is the distribution of
the random walk after $d+1$ steps.
It is easily seen that $\nu_\a^{(d)}$ is the law of the random variable $S_d(\alpha):=\sum_{j=0}^{d} X_j\a^j\in\F_p$,
where $X_j\in\{-1,1\}$ are independent unbiased random variables, and we can
write its Fourier transform as
\[
\wh\nu_\a^{(d)}(\xi):=\mathbb{E}\big(\exp(\frac{2i \pi S_d(\alpha)}{p})\big)=\prod_{j=0}^d\cos(2\pi\a^j\xi/p).
\]

Our first aim is to bound $|\wh\nu_\a^{(d)}(\xi)|$ away from $1$.
Expanding $\cos$ in power series at $0$, we see that
we need to give a lower bound for $\sum([\a^j\xi]^\sim)^2$, where $[\cdot]^\sim$ denotes
the unique lift of an element in $\F_p$ to $(-p/2,p/2) \cap \Z$.
We will use some bounds of Konyagin \cite{Kon-RW} for this purpose.
We note that Bukh, Harper and Helfgott have used similar ideas in  unpublished work (see \cite{Hel-growth}*{footnote 4 on page 372}) in order to bound the diameter of the graph underlying the random walk.

Once we have bounded away   $|\wh\nu_\a^{(d)}(\xi)|$ from $1$, we can exploit the fact that $\nu_\a^{(d)}$ is
a convolution product of the form $\nu_\a^{(-1,d_1)}*\cdots*\nu_\a^{(d_k,d)}$, where $0\le d_1\le\ldots d_k\le d$
are some integers and $\nu_\a^{(d_1,d_2)}$ is the law of $\sum_{j=d_1+1}^{d_2} X_j\a^j\in\F_p$.
We can also bound $|\wh\nu_\a^{(d_j,d_{j+1})}(\xi)|$ away from $1$ in a similar manner.
Multiplying these bounds together we can get sufficiently strong bounds for $|\wh\nu_\a^{(d)}(\xi)|$
so that we can deduce that the random walk is equidistributed using Parseval's formula.

Since we do not need an equidistribution result for each individual parameter $\a$, we can improve the above
argument by giving an initial estimate for the sum of the $L^2$ norms:
\begin{align*}
\sum_{\a\in\F_p}\|\nu_\a^{(d)}\|_2^2=\sum_{\a\in\F_p}\frac{1}{2^{2d+2}}|\{(x_0,\ldots, x_{d}),&(x_0',\ldots, x_d')\in\{\pm1\}^{d+1}:\\
&x_0+\ldots+ x_d\a^d=x_0'+\ldots+ x_d'\a^d\}|.
\end{align*}
Such an initial bound can be given by exploiting the fact that the polynomial equation
\[
x_0+\ldots+ x_d\a^d=x_0'+\ldots+ x_d'\a^d
\]
may have at most $d$ roots in $\F_p$ unless $x_j=x_j'$ for all $j$.

The rest of the section is organized as follows.
We set out our general framework in the next section and state the equidistribution result we will use later.
In Section \ref{sc:Konyagin}, we give a generalized exposition of Konyagin's argument in our setup
with some slight quantitative improvements.
Then we use it to deduce an estimate for the Fourier coefficients of $\nu_\a^{(d)}$.
We prove our main equidistribution result (Proposition \ref{pr:equi-dist}) in Section \ref{sc:equi-dist}.
Finally, in Section \ref{sc:a2} we prove another equidistribution statement that we need exclusively to bound the probability that
a random polynomial is a proper power.

In this paper, we focus only on those equidistribution results that we need in our applications.
We believe that these random walks are of independent interest and we will study them further
in a subsequent paper.

\subsection{The general setting and results}\label{sc:gen-nota}

We use the following notation throughout this section.
Let $M\in\Z_{>0}$.
Let $p_1,\ldots, p_M$ be distinct primes (say each $\ge 5$) and let $m_1,\ldots, m_M\in\Z_{>0}$ be numbers. 
Let
\begin{align*}
V=&\bigoplus_{i=1}^M \F_{p_i}^{m_i},\\
D=&\max_{i=1,\ldots,M} m_i,\\
Q=&p_1\cdots p_M,
\end{align*}

For $\a=(\a_1,\ldots,\a_M) \in V$ we write $\a_i:=(\a_{i,1},\ldots,\a_{i,m_i}) \in \F_{p_i}^{m_i}$ and for another $\beta \in V$ we write $\alpha \beta =(\alpha_{i,j} \beta_{i,j})_{i,j}$, so for instance if $n\in\Z$, we have $\a^n=(\a_{i,j}^n)_{i,j}$.

We have a canonical isomorphism of additive groups between  $\oplus_i^M \F_{p_i}$ and $\Z/Q\Z$ given by
\[
\Psi:(x_1,\ldots,x_M) \mapsto \sum_{i=1}^M \psi_i(x_i),
\]
where $\psi_i$ denotes the additive homomorphism $\F_{p_i}  \hookrightarrow \Z/Q\Z$, $x \mapsto \frac{Q}{p_i}x$. Moreover we have the trace map $\tr: V \to \oplus_i^M \F_{p_i}$ given by
\[
\tr(\a)= (\sum_{j=1}^{m_1} \alpha_{1,j},\ldots,\sum_{j=1}^{m_M} \alpha_{M,j}).
\]

Let $X_0,X_1,\ldots, X_d$ be a sequence of independent $\Z$ valued random variables.
We assume that $X_1,\ldots, X_{d-1}$ are identically distributed and write
$\mu$ for their common law. 
We will study the random walk in the additive group $(V,+)$ whose $n$-th step is $\sum_{j=0}^n X_j \a^j$ and
denote by $\nu_\a^{(n)}$ the law of this random element.

Our decision to exempt $X_0$ and $X_d$ from having the same distribution as the other steps of the walk
is motivated by our intention to permit families of random polynomials whose leading and constant
terms have a distribution that differs from the rest.
Our method would allow us to relax the requirement of identical distribution further
by allowing small perturbations of the same law
and a small number of exceptional steps.
We leave it to the interested reader to formulate such a statement.

\begin{definition}
We say that $\a\in V$ is \emph{generic} if  for each $i\in [1,M]$ the coordinates $(\a_{i,j})_{1\le j\le m_i}$ are non-zero and pairwise distinct.
\end{definition}

For $\kappa \in (0,\frac{1}{100})$, we write $\cA_{\kappa}\subset V$ for the set of generic $\a\in V$ such that none of the coordinates $\a_{i,j}$ is a root of a polynomial $P \in \Z[x]$ with $\deg P \le 3\log(Q^D)$ and Mahler measure $M(P)\leq \exp({\kappa})$.

The aim of this section is to prove the following result, which asserts under suitable
conditions that the probability that the random walk is at $0$ after $d$ steps is
approximately $|V|^{-1}$ on average for parameters $\a\in\cA_\kappa$.

\begin{proposition}\label{pr:equi-dist}
There are absolute constants $c,C>0$ such that the following holds.
Let $d\in\Z_{>0}$, $0<\tau<1$.
Suppose that
\begin{align*}
d\ge &C(\kappa \tau)^{-1}MD\log Q^D(\log\log Q^D)^3,\\
\log(Q^D)\ge&\max\Big(\frac{1}{\kappa}, \tau^{-1}\Big),\\
\|\mu\|_2^2:=\sum_{x \in \Z} \mu(x)^2 < &1-\tau.
\end{align*}
Suppose further that $\supp \mu\subset(-p_i/2,p_i/2)$ for each $i=1,\ldots,M$.

Then
\[
\Bigg|\sum_{\a\in A}\nu_\a^{(d)}(0)-\frac{|A|}{|V|}\Bigg|
<\exp\Big(-c\frac{\tau \kappa d}{\log Q^D(\log\log Q^D)^2}\Big)
\]
for any $A\subset\cA_\kappa$.
\end{proposition}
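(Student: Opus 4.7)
\emph{Proof plan.} The strategy is Fourier-analytic on the finite abelian group $V$. Let $\wh\nu_\a^{(d)}$ denote the Fourier transform of $\nu_\a^{(d)}$ with respect to the natural additive pairing on $V$, and let $\wh\mu_j$ denote the Fourier transform of the law of the $j$-th step $X_j\a^j$. Because $\nu_\a^{(d)}$ is the convolution of the laws of the $d+1$ independent increments,
\[
\wh\nu_\a^{(d)}(\xi)=\wh\mu_0(\xi)\,\wh\mu_d(\a^d\cdot\xi)\prod_{j=1}^{d-1}\wh\mu(\a^j\cdot\xi).
\]
Since $\wh\nu_\a^{(d)}(0)=1$, Fourier inversion at the origin gives $\nu_\a^{(d)}(0)-|V|^{-1}=|V|^{-1}\sum_{\xi\neq 0}\wh\nu_\a^{(d)}(\xi)$, so the claim reduces to controlling
\[
\frac{1}{|V|}\sum_{\xi\neq 0}\Bigl|\sum_{\a\in A}\wh\nu_\a^{(d)}(\xi)\Bigr|.
\]

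The main input I would use is the Konyagin-type pointwise Fourier decay to be developed in the next subsection. I would partition $\{1,\ldots,d-1\}$ into $k$ consecutive blocks $B_1,\ldots,B_k$ of length $\ell\approx d/k$, factoring $\wh\nu_\a^{(d)}(\xi)$ into $k$ shorter Fourier transforms of partial walks (plus harmless boundary terms). For $\a\in\cA_\kappa$ and $\xi\neq 0$, each block should satisfy an estimate of the form
\[
\prod_{j\in B_s}|\wh\mu(\a^j\cdot\xi)|\;\leq\;\exp\!\Bigl(-\frac{c\,\tau\kappa\,\ell}{\log Q^D(\log\log Q^D)^2}\Bigr),
\]
in which $\|\mu\|_2^2<1-\tau$ forces $|\wh\mu|$ away from $1$ except near a thin set of exceptional frequencies, the admissibility condition $\a\in\cA_\kappa$ (no coordinate of $\a$ is a root of a low-degree polynomial of small Mahler measure) prevents the orbit $\{\a^j\xi\}$ from being trapped in such an exceptional set, and the $(\log\log Q^D)^2$ loss reflects the quantitative strength of the Dobrowolski bound embedded in the Konyagin estimate. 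Multiplying across the $k$ blocks yields an exponential pointwise bound in $d$.

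The pointwise bound alone is not enough, because summing over $\xi$ costs a factor of $|V|$. To cover the remaining small set of bad frequencies I would pair the Konyagin bound with a crude $L^2$ average over $\a$ based on coincidence counting. Writing $\|\nu_\a^{(d)}\|_2^2=\P[\sum X_j\a^j=\sum X_j'\a^j]$ for independent copies $(X_j), (X_j')$, and observing that any nontrivial polynomial identity $\sum(X_j-X_j')\a^j=0$ has at most $d$ solutions in each factor field $\F_{p_i}$, yields an estimate of the shape
\[
\sum_{\a\in\cA_\kappa}\|\nu_\a^{(d)}\|_2^2\;\leq\;\frac{|\cA_\kappa|}{|V|}+|\cA_\kappa|\|\mu\|_2^{2(d-1)}+O\!\bigl(d^{MD}/|V|\bigr),
\]
which, by Parseval, translates into an $L^2$ bound on $\sum_\a\sum_{\xi\neq 0}|\wh\nu_\a^{(d)}(\xi)|^2$, allowing the exceptional frequencies to be handled by Cauchy--Schwarz.

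The main obstacle will be calibrating the number of blocks $k$, the threshold separating ``good'' and ``bad'' frequencies, and the interpolation exponents between the pointwise Konyagin bound and the $L^2$ coincidence bound, so that the combined exponent matches the advertised rate $c\tau\kappa d/(\log Q^D(\log\log Q^D)^2)$. A secondary technical difficulty is that Konyagin's argument is naturally formulated for a single prime, so I would adapt it to the product setting $V=\bigoplus\F_{p_i}^{m_i}$ via the trace map $\tr$ and the genericity of $\a$ (distinct nonzero coordinates in each factor), reducing the problem to uniform one-prime Fourier estimates. The quantitative hypotheses $d\ge C(\kappa\tau)^{-1}MD\log Q^D(\log\log Q^D)^3$ and $\log Q^D\ge\max(1/\kappa,1/\tau)$ are precisely what is needed to absorb the factor $|V|\le\exp(MD\log Q)$ coming from Parseval while leaving a positive exponent.
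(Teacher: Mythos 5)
Your plan follows essentially the same route as the paper: Fourier inversion on $V$, a coincidence-counting $L^2$ bound over $\a$ (using that a nontrivial polynomial has at most $d$ roots in each $\F_{p_i}$), and the blockwise Konyagin-type pointwise Fourier decay for $\a\in\cA_\kappa$, combined via Cauchy--Schwarz. The only cosmetic difference is that the paper applies the $L^2$ bound to a dedicated initial segment of the walk of length about $\tau^{-1}\log|V|$ to absorb the sum over all $|V|$ frequencies (the pointwise block bound in fact holds for every nonzero frequency once $\a\in\cA_\kappa$, so there is no good/bad frequency dichotomy to manage), but this is bookkeeping rather than a different idea.
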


\noindent \emph{Remark.} This proposition will be used in Section \ref{sc:exp-root-2} twice. Once with $M=1$ and a large prime $p$ and fixed power $m$. And another time with $M=2$ and $m_1=m_2$. For the theorems of the introduction, except part (2) of Theorem \ref{th:GRH} about the generic Galois group, it is enough to consider the case $m=1$.

\subsection{Estimates for the Fourier coefficients of the random walk}\label{sc:Konyagin}

The aim of this section is to revisit an argument of Konyagin from \cite{Kon-RW}
to obtain Proposition \ref{pr:Konyagin} below.
Then we will use it in Proposition \ref{pr:Fourier} to deduce a bound for the Fourier coefficients
of the random walk.

For each $\a\in\Z/Q\Z$, we write $\wt \a$ for the unique lift of $\a$ in $\Z\cap(-Q/2,Q/2]$.
\begin{proposition}\label{pr:Konyagin}
Let notation be as in Section \ref{sc:gen-nota}.
Let $\a,\beta\in V$.
Assume that $\a$ is generic and $\beta_{i,j}\neq 0$ for all $i,j$.
Write
\[
S_n=\Psi \circ \tr( \beta \alpha^n)\in \Z/Q\Z.
\]

Let $L\ge 200 \log Q^D\log\log Q^D$ be an integer and suppose that
\begin{equation}\label{eq:hyp-Kon}
\sum_{n=0}^{L} \wt S_n^2\le \frac{Q^2}{8\log(4L)}.
\end{equation}

Then for each $i=1,\ldots, M$ and $j=1,\ldots, m_i$, there is $P_{i,j}\in \Z[x]$ of degree at most $3\log(Q^D)$
with Mahler measure at most $(\log(Q^D))^{30 (\log Q^D)/L}$ such that $P_{i,j}(\a_{i,j})=0$.
\end{proposition}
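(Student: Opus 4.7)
The plan is to follow Konyagin's strategy from \cite{Kon-RW}: using the smallness of the lifts $\wt S_n$, I aim to construct a nonzero polynomial $P \in \Z[x]$ of degree at most $3\log Q^D$, with sparse integer coefficients, satisfying the $D$ simultaneous mod-$Q$ vanishing conditions $\sum_n c_n S_{n+r} \equiv 0 \pmod Q$ for $r = 0,1,\ldots,D-1$. A Vandermonde decoupling then upgrades these $D$ relations into individual pointwise identities $P(\a_{i,j}) = 0$ in $\F_{p_i}$, and Landau's inequality $M(P) \le \|P\|_2$ (refined if necessary by extracting cyclotomic factors) will convert the coefficient control on $P$ into the target Mahler measure bound.

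Step one is combinatorial: Chebyshev's inequality applied to the hypothesis $\sum_{n=0}^L \wt S_n^2 \le Q^2/(8 \log (4L))$ gives, at every scale $R>0$,
\[
|\{n \in [0,L] : |\wt S_n| > R\}| \le \frac{Q^2}{8 R^2 \log(4L)},
\]
so most of the lifts $\wt S_n$ are concentrated near $0 \in \Z/Q\Z$. Step two is an iterated pigeonhole. Consider the $2^k$ polynomials $\sum_{n \in T} \varepsilon_n x^n$ with $\varepsilon_n \in \{0,1\}$ and $T$ a set of indices in $[0, 3\log Q^D]$ where $\wt S_n$ is small, and map each candidate to its $D$-tuple $\bigl(\sum_n \varepsilon_n S_{n+r}\bigr)_{r=0}^{D-1} \in (\Z/Q\Z)^D$. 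A single round of pigeonhole produces a $\{-1,0,1\}$-valued difference polynomial $P$ whose $D$ linear combinations all lie in a small mod-$Q$ window; iterating through $\sim L/\log Q^D$ successively finer thresholds, and using the concentration from step one at each stage to sparsify the current polynomial, eventually drives this window to zero while controlling the support and coefficient size. Step three is the decoupling: since $\Psi$ is an isomorphism, the $D$ mod-$Q$ vanishings rewrite as $\sum_{j=1}^{m_i} \beta_{i,j} \a_{i,j}^r P(\a_{i,j}) = 0$ in $\F_{p_i}$ for each $i$ and each $r = 0, \ldots, m_i-1$; the distinctness of $(\a_{i,j})_j$ together with $\beta_{i,j} \ne 0$ makes the Vandermonde system invertible, forcing $P(\a_{i,j})=0$ and letting us set $P_{i,j} := P$.

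The crux is step two. One must simultaneously keep the support of $P$ inside $[0, 3\log Q^D]$ (a degree bound independent of $L$), reach a Mahler measure bound that sharpens as $L$ grows, and force exact vanishing in all $D$ mod-$Q$ coordinates. Tracking the loss at each dyadic scale, balancing the threshold $R$, the support size, the coefficient magnitude, and the number of iterations so that the three constraints close precisely in the regime $L \ge 200 \log Q^D \log\log Q^D$ is the main technical difficulty; moreover, near the low end of $L$ the target bound is too tight to follow from $\|P\|_2$ alone, so one must exploit additional structural features of $P$, such as divisibility by cyclotomic factors, to beat that naive estimate.
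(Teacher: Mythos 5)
Your overall skeleton (pigeonhole to produce a $\{-1,0,1\}$-coefficient annihilating polynomial, then Vandermonde decoupling via $\beta_{i,j}\neq 0$ and distinctness of the $\a_{i,j}$ to get $P(\a_{i,j})=0$) matches the paper, and your step three is exactly the paper's Lemma \ref{lm:modp}. But two essential mechanisms are missing or replaced by steps that do not close. First, your step one uses Chebyshev to say that most individual lifts $\wt S_n$ are small, and you then restrict the support $T$ to such indices. This loses the square-root cancellation that the hypothesis \eqref{eq:hyp-Kon} is calibrated to exploit: with $E\asymp \log Q^D$ and $\log(4L)\asymp\log\log Q^D$, the number of indices with $|\wt S_n|>Q/(2(E+1))$ can be of order $(\log Q^D)^2/\log\log Q^D\gg E$, and moreover the ``good'' set depends on the shift $r$, so a fixed support $T$ cannot make all the shifted sums small. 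The paper instead applies Hoeffding's inequality to random $\pm1$ combinations of a full window of length $E+1$: by \eqref{eq:hyp-Kon} each shifted sum is $<Q/2$ with probability $1-1/(2L)$, a union bound over all $L-E$ shifts leaves more than $Q^D$ admissible sign vectors, and a \emph{single} pigeonhole in $(\Z/Q\Z)^D$ then yields a collision whose difference satisfies the congruence $\sum a_j S_{j+n}\equiv 0$ for $n=0,\dots,D-1$; Lemma \ref{lm:modp} propagates this to all $n$, and since each integer sum has absolute value $<Q$, the congruence forces exact vanishing. Your proposed iteration ``through successively finer thresholds to drive the window to zero'' is neither needed nor workable: pigeonhole cannot shrink a window below the integrality gap, and the paper's point is that one collision mod $Q$ plus the a priori bound $<Q$ already gives exact integer relations.

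Second, and more seriously, the Mahler measure bound $(\log Q^D)^{30\log(Q^D)/L}$ --- which must improve as $L$ grows --- does not come from $\|P\|_2$ or from stripping cyclotomic factors. The paper runs the construction a second time with a polynomial of the form $P_2(x)=Q_2(x^q)$, supported on multiples of a prime $q\asymp L/\log Q^D$ chosen via Lemma \ref{lm:q} so that the roots of $P_{i,j}$ have pairwise distinct $q$-th powers. The gcd structure of the annihilator set (Konyagin's Lemma \ref{lm:Lambda} / Corollary \ref{cr:Lambda}), applied over the range $n\le\lceil 2L/3\rceil$, forces the minimal annihilator $P_0$ to divide $P_2$, hence the $q$-th powers of the roots of $P_{i,j}$ are distinct roots of $Q_2$, giving $M(P_{i,j})^q\le M(Q_2)\le (E+1)^{1/2}$ and the stated bound. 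Without this $q$-th power trick and the divisibility lemma, your argument cannot produce any exponent that decays in $L$.
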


Write $e_Q(y)=\exp(-2\pi i y/Q)$ for $y \in \Z/Q\Z$.
Given $\beta \in V$ the function $\chi_\beta:V \to \C^\times$
\begin{equation}\label{chibet}
\chi_\beta: x \mapsto e_Q\Big(\Psi \circ \tr (\beta x)\Big)
\end{equation}
is a complex character of the additive group $(V,+)$ and every character is of this form.
Given a measure $\nu$ on $V$, we use the following notation for its Fourier transform:
\[
\wh\nu(\beta)=\sum_{x\in V} \chi_\beta(x) \nu(x).
\]

We write $\nu_\a^{(l_1,l_2)}$ for the law of the random element in $V$ given by
$\sum_{n=l_1+1}^{l_2} X_n\a^n$.
In this notation $\nu_\a^{(d)}=\nu_\a^{(-1,d)}$.

\begin{proposition}\label{pr:Fourier}
Let notation be as in the beginning of the section.
Let $\a,\beta\in V$.
Assume that $\a$ is generic and $\beta_{i,j}\neq 0$ for all $i,j$.
Suppose further that $\supp \mu\subset(-p_i/2,p_i/2)$ for each $i=1,\ldots,M$.

Let $L\ge 200 \log Q^D\log(\log Q^D)$ be an integer and suppose that
there are $i,j$ such that $\a_{i,j}$ is not a root of an integer polynomial of degree at most $3\log Q^D$
with Mahler measure at most $(\log Q^D)^{30 \log (Q^D)/L}$.

Then
\[
|\wh{\nu_\a^{(l_1,l_2)}}(\beta)|\le\exp\Big(-\frac{1-\|\mu\|_2^2}{8\log(4L)}\Big)
\]
for all $0\le l_1<l_2<d$ such that $l_2-l_1>L$.
\end{proposition}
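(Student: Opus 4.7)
My plan is to deduce the proposition from Proposition \ref{pr:Konyagin} by a short Fourier-analytic computation. Since $X_{l_1+1},\ldots,X_{l_2}$ are i.i.d.\ with law $\mu$, independence immediately yields
\[
\wh{\nu_\a^{(l_1,l_2)}}(\beta) = \prod_{n=l_1+1}^{l_2}\wh\mu(S_n), \qquad S_n:=\Psi\circ\tr(\beta\a^n)\in\Z/Q\Z,
\]
where $\wh\mu(s):=\sum_{x\in\Z}\mu(x)e_Q(sx)$, so the task reduces to bounding $\prod_n|\wh\mu(S_n)|^2$ from above. Expanding via convolution gives
\[
1-|\wh\mu(s)|^2 = \sum_{z\neq 0}(\mu*\tilde\mu)(z)\bigl(1-\cos(2\pi \wt{zs}/Q)\bigr),
\]
with $\tilde\mu(x):=\mu(-x)$, $(\mu*\tilde\mu)(0)=\|\mu\|_2^2$, and $\wt{zs}\in(-Q/2,Q/2]$ the integer lift of $zs\in\Z/Q\Z$. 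The support hypothesis $\supp\mu\subset(-p_i/2,p_i/2)$ forces $|z|<p_i$ for every nonzero $z\in\supp(\mu*\tilde\mu)$, so $z$ is invertible modulo each $p_i$.

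Next, I would fix a nonzero $z\in\supp(\mu*\tilde\mu)$ and set $\beta':=z\beta\a^{l_1+1}$; because $\a$ is generic, $\beta_{i,j}\neq 0$, and $z$ is invertible modulo each $p_i$, all coordinates of $\beta'$ are nonzero. Applying the contrapositive of Proposition \ref{pr:Konyagin} with $\beta$ replaced by $\beta'$ (and the change of variables $m=n-l_1-1$), the hypothesis that some $\a_{i,j}$ is not a root of a polynomial of degree $\le 3\log Q^D$ and Mahler measure $\le(\log Q^D)^{30\log(Q^D)/L}$ yields
\[
\sum_{n=l_1+1}^{l_1+L+1}(\wt{zS_n})^2 > \frac{Q^2}{8\log(4L)}.
\]
Combined with the elementary bound $1-\cos(2\pi t)\ge 8 t^2$ for $|t|\le 1/2$ and the $1$-periodicity of $t\mapsto\cos(2\pi t)$, this translates into
\[
\sum_{n=l_1+1}^{l_1+L+1}\bigl(1-\cos(2\pi \wt{zS_n}/Q)\bigr)\ge\frac{1}{\log(4L)}.
\]

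Finally, summing over $z\neq 0$ weighted by $(\mu*\tilde\mu)(z)$ and using $\sum_{z\neq 0}(\mu*\tilde\mu)(z)=1-\|\mu\|_2^2$, one obtains
\[
\sum_{n=l_1+1}^{l_1+L+1}\bigl(1-|\wh\mu(S_n)|^2\bigr)\ge\frac{1-\|\mu\|_2^2}{\log(4L)}.
\]
Since $l_2-l_1>L$ this block sits inside $[l_1+1,l_2]$, while the remaining factors in the product have modulus at most $1$. Applying $-\log u\ge 1-u$ for $u\in(0,1]$ then gives a bound on $|\wh{\nu_\a^{(l_1,l_2)}}(\beta)|$ of the form $\exp(-(1-\|\mu\|_2^2)/(2\log(4L)))$, stronger than the claim. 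The only delicate point is the verification that $\beta'$ has no zero coordinate, which is precisely the reason for imposing $\supp\mu\subset(-p_i/2,p_i/2)$; the rest is routine.
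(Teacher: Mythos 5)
Your argument is correct and follows essentially the same route as the paper: factor $\wh{\nu_\a^{(l_1,l_2)}}(\beta)$ into the product of the single-step Fourier coefficients, expand over the nonzero differences $z=a_1-a_2$ (which the support condition makes invertible modulo each $p_i$), and apply the contrapositive of Proposition \ref{pr:Konyagin} to $z\beta\a^{l_1+1}$ over a block of $L+1$ consecutive steps. The only differences are bookkeeping ones — you work with $1-|\wh\mu|^2$ and $-\log u\ge 1-u$ where the paper bounds each factor by an exponential via Lemma \ref{lm:F-single} — and your constants are in fact slightly sharper than the stated bound.
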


First, we focus on the proof of Proposition \ref{pr:Konyagin}, which closely follows Konyagin \cite{Kon-RW}.
Using a pigeon hole argument, it is easy to find non-zero polynomials $P_1,P_2\in\Z[x]$
of degree at most $\log Q^D$ with $\pm1,0$ coefficients such that $P_1(\a_{i,j})=P_2(\a_{i,j}^q)=0$ for all $i,j$.
Here $q$ is a carefully chosen prime number.
The heart of the argument is the idea that when \eqref{eq:hyp-Kon} holds, it is possible to find $P_1$ and $P_2$ in such a way that for
each $i,j$, there is $P_{i,j}\in\Z[x]\setminus\{0\}$ such that $P_{i,j}(\a_{i,j})=0$ and $P_{i,j}(x)|\GCD(P_1(x),P_2(x^q))$.
From this, we will conclude that $\deg(P_{i,j})\le\deg(P_1)$ and $M(P_{i,j})\le M(P_2)^{1/q}$.

We begin to implement this strategy.
Given  a monic irreducible polynomial $P\in\Z[x]$, the next lemma allows us to find a prime $q$ of controlled size
such that whenever $P(x)|Q(x^q)$ for another polynomial $Q\in\Z[x]$, we have $M(P)\le M(Q)^{1/q}$. 

\begin{lemma}\label{lm:q}(See also \cite{Kon-RW}*{Lemma 2})
Let $\a_1,\ldots, \a_n$ be the roots of an irreducible polynomial $P\in\Z[x]$.
Let $s\ge 4\log n$ be a number.
If $n$ is larger than some absolute constant, then there is a prime $q \in (s,2s]$ such that $\a_i/\a_j$ is not a $q$-th root of unity for any
$i\neq j$.
\end{lemma}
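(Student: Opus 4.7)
The plan is to count, within the dyadic range $(s, 2s]$, the primes $q$ for which the $q$-th power map $\a \mapsto \a^q$ \emph{fails} to be injective on $\{\a_1, \ldots, \a_n\}$; call such a $q$ bad. A prime is bad precisely when $\a_i/\a_j$ is a primitive $q$-th root of unity for some $i \neq j$ (since $q$ is prime and $P$ is separable). The goal is to show that in $(s, 2s]$ the number of bad primes is strictly smaller than the total number of primes, which is $\ge cs/\log s$ by an effective Chebyshev/Bertrand estimate.

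The main Galois input is the following. If $q$ is bad and $K$ denotes the splitting field of $P$, then $\zeta_q \in K$ and the restriction map $\chi_q : \Gal(K/\Q) \twoheadrightarrow \Gal(\Q(\zeta_q)/\Q) = (\Z/q)^\times$ is surjective. Starting from a relation $\a_i = \zeta_q^a \a_j$ with $a \in (\Z/q)^\times$ and applying every $\sigma \in \Gal(K/\Q)$ produces pairs of roots $(\a_{\sigma(i)}, \a_{\sigma(j)})$ whose ratios $\zeta_q^{a\chi_q(\sigma)}$ exhaust all $q-1$ primitive $q$-th roots of unity. Hence
\[
R_q := \{(i,j) : i \neq j,\ \a_i/\a_j \text{ is a primitive } q\text{-th root of unity}\}
\]
has $|R_q| \geq q - 1$. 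Different bad primes give disjoint $R_q$ (the order of a given ratio is uniquely determined), and there are $n(n-1)$ ordered pairs $(i,j)$ with $i \neq j$, so
\[
\sum_{q \text{ bad}} (q - 1) \leq n(n-1),
\]
which bounds the number of bad primes in $(s,2s]$ by $n(n-1)/s$.

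To conclude I would compare this with the Chebyshev lower bound on $\pi(2s) - \pi(s)$: a good prime exists as soon as $cs/\log s > n(n-1)/s$. The difficulty is that this naive comparison only yields the lemma when $s$ is of order roughly $n\sqrt{\log n}$, considerably weaker than the stated hypothesis $s \geq 4\log n$. Closing this gap is the main obstacle I expect to face, and it appears to require a refined upper bound on the count of bad primes in a dyadic window. Since $\zeta_q \in K$ for every bad $q$, we actually have $\zeta_q \in K^{ab}$ (the maximal abelian subextension of $K/\Q$); by Kronecker--Weber every bad prime then divides the conductor of $K^{ab}$, which in turn is controlled by the ramified primes of $K$ and hence by $\Delta_P$, the discriminant of $P$. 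Exploiting that $P$ is irreducible in $\Z[x]$ of degree $n$ (so that $|\Delta_P|$ is an integer admitting effective size bounds in terms of $n$ and a crude height of $P$), and combining with the transitivity of the Galois action on the $n$ roots, one should be able to push the count of bad primes in $(s,2s]$ down to a quantity much smaller than $\pi(2s)-\pi(s)$ under the weaker assumption $s \geq 4\log n$. Carrying out this structural refinement, which is the heart of Konyagin's approach, is the step I expect to be the most delicate.
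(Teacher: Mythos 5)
Your counting argument is correct as far as it goes: the Galois orbit of a single relation $\a_i=\zeta_q^a\a_j$ does produce all $q-1$ primitive $q$-th roots of unity among the ratios, the sets $R_q$ for distinct primes are disjoint, and hence at most $n(n-1)/s$ primes in $(s,2s]$ can be bad. But, as you acknowledge, this additive bound only beats $\pi(2s)-\pi(s)$ when $s\gg n\sqrt{\log n}$, and the route you sketch for closing the gap cannot work. It is true that every bad prime $q$ satisfies $\zeta_q\in K$, hence ramifies in $K$ and divides $\Delta_P$; but the lemma carries no hypothesis on the height of $P$, so $|\Delta_P|$ — and with it the number of its prime divisors and the conductor of $K^{\mathrm{ab}}$ — is not bounded in terms of $n$ alone. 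Any argument routed through ramification or discriminants therefore cannot produce a statement uniform over all irreducible $P\in\Z[x]$ of degree $n$, which is what the lemma asserts.

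The missing idea is multiplicative rather than additive. Let $\mathcal{R}$ be the set of integers $r$ such that some ratio $\a_i/\a_j$ is a root of unity of exact order $r$. Transitivity of the Galois action (irreducibility of $P$) shows that every $r\in\mathcal{R}$ is realized by a ratio of the special form $\a_1/\a_j$; consequently, if $r_1,r_2\in\mathcal{R}$ are coprime, realized by $\a_1/\a_i$ and $\a_1/\a_j$ respectively, then $\a_i/\a_j=(\a_1/\a_i)^{-1}(\a_1/\a_j)$ has order $r_1r_2$, so $\mathcal{R}$ is closed under coprime products. If every prime in $(s,2s]$ were bad, then $r=\prod_{q\in(s,2s]}q$ would lie in $\mathcal{R}$, and by the very orbit argument you already used, all $\varphi(r)$ roots of unity of order $r$ would occur among the at most $n^2$ ratios. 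Since $\varphi(r)\ge c\prod q=c\exp\bigl(\sum\log q\bigr)\ge\exp(s/2)$ for large $s$ by the prime number theorem, this forces $n^2\ge\exp(s/2)$, contradicting $s\ge 4\log n$. This replaces your bound $\sum_{q\ \mathrm{bad}}(q-1)\le n(n-1)$ by an exponentially stronger one and is exactly what makes the hypothesis $s\ge4\log n$ suffice.
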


\begin{proof}
Denote by $\mathcal{P}_s$ the set of primes between $s$ and $2s$.
Write $\mathcal{R}$ for the collection of integers $r$ such that there is a root of unity of order $r$
among the numbers $\a_i/\a_j$. Suppose by way of contradiction  that $\mathcal{P}_s \subset \mathcal{R}$.

We begin with the observation that if $r\in \mathcal{R}$, then there is $1\le j \le n$ such that $\a_1/\a_j$ is a root of unity of order $r$.
Indeed, let $i,j$ be such that $\a_i/\a_j$ is a root of unity of order $r$ and let
$\s$ be an automorphism of $\overline \Q$ such that $\s(\a_i)=\a_1$.
Then $\s(\a_i/\a_j)=\a_1/\s(\a_j)$ is a root of unity of order $r$ and this proves the claim.

Suppose $r_1,r_2\in \mathcal{R}$ are coprime integers.
We prove $r_1r_2\in \mathcal{R}$.
Let $i$ and $j$ be such that $\a_1/\a_i$ and $\a_1/\a_j$ are roots of unity of order $r_1$ and $r_2$ respectively.
Then $\a_i/\a_j=(\a_1/\a_i)^{-1}(\a_1/\a_j)$ is a root of unity of order $r_1r_2$, which proves the claim.

Therefore, each divisor $r$ of $\prod_{p \in \mathcal{P}_s} p$ belongs to
$\mathcal{R}$. Since the set of roots $\alpha_1,\ldots,\alpha_n$ is invariant under the action of the Galois group, it follows that for any such $r$ all roots of unity of order $r$ appear among the $\alpha_i/\alpha_j$'s.
Hence
\[
n^2\ge \prod_{p \in \cP_s} p =  \exp(\sum_{p \in \cP_s}\log p).
\]
By the prime number theorem, we have
\[
n^2\ge \exp(s/2)
\]
if $n$ and hence $s$ is sufficiently large.
(In fact, we could put here any constant less than $1$ in place of $1/2$.)
This proves the lemma.
\end{proof}

Let $N \ge e \ge 0$ be integers and $X=(x_0,\ldots, x_N)$ a sequence of integers. Following Konyagin \cite{Kon-RW}, we write $\Lambda_e(X)$ for the set of polynomials $P(x)=a_0+ a_1 x + \ldots +a_ex^e\in\Z[x]$
of degree at most $e\in \Z_{\ge 0}$ such that
\[
a_0 x_{j}+\ldots+ a_e x_{j+e}=0
\]
holds for all $j=0,\ldots,N-e$.
We denote by $\Lambda(X)$ the set of polynomials $P$ of degree at most $N$ such that $P\in\Lambda_{\deg P}(X)$.
We note that $P\in\Lambda_e(x_0,\ldots, x_N)$ if and only if $P\in\Lambda(x_0,\ldots,x_{N-e+\deg P})$.

If $X$ were an infinite sequence, then $\Lambda$ would give rise to a principal  ideal in $\Z[x]$.
We need a weaker form of this fact that is valid for finite sequences. 
To this end, we recall the following result.

\begin{lemma}[Konyagin \cite{Kon-RW}*{Lemma 5}]\label{lm:Lambda}
Let $X=(x_0,\ldots, x_N)$ be a sequence of integers and let $P_1,P_2\in\Lambda(X)$.
If $\deg P_1+\deg P_2\le N$, then we have $\gcd(P_1,P_2)\in\Lambda(X)$.
\end{lemma}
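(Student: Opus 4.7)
The plan is to reduce the statement to standard properties of linear recurrence sequences by passing to an infinite extension. Write $D = \gcd(P_1,P_2)$, put $d_0 = \deg D$, and factor $P_i = D Q_i$ with $Q_i \in \Z[x]$ and $\gcd(Q_1,Q_2) = 1$ in $\Q[x]$. The goal is to show that the finite sequence $Y := D(T) X \in \Q^{N-d_0+1}$, with entries $Y_j = \sum_{i=0}^{d_0} D_i x_{i+j}$ for $j = 0,\ldots,N-d_0$, vanishes identically; this is precisely the assertion $D \in \Lambda(X)$.

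The first step is the observation that, from $P_i(T) X = 0$ (as a sequence of length $N - \deg P_i + 1$) and the associativity identity $(Q_i D)(T) X = Q_i(T)\bigl(D(T) X\bigr)$, one obtains $Q_i(T) Y = 0$ as a sequence of length $N - \deg P_i + 1$ for both $i = 1, 2$. So $Y$ satisfies two partial recurrences with coprime characteristic polynomials.

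Next I would extend $Y$ to an infinite rational sequence $Y_\infty$ by iterating the $Q_1$-recurrence; this is well defined because the leading coefficient of $Q_1$ is a nonzero rational, and it is consistent with the already-fixed entries $y_{\deg Q_1},\ldots,y_{N-d_0}$ since $Q_1(T) Y = 0$ forces those entries from $y_0,\ldots,y_{\deg Q_1 - 1}$ via the same recurrence. By construction $Q_1(T) Y_\infty = 0$ as an infinite sequence. To prove also $Q_2(T) Y_\infty = 0$, set $Z := Q_2(T) Y_\infty$; the commutativity of $Q_1$ and $Q_2$ gives $Q_1(T) Z = (Q_1 Q_2)(T) Y_\infty = Q_2(T)\bigl(Q_1(T) Y_\infty\bigr) = 0$, so $Z$ itself satisfies the $Q_1$-recurrence of order $\deg Q_1 = \deg P_1 - d_0$. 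The first $N - \deg P_2 + 1$ entries of $Z$ vanish by $Q_2(T) Y = 0$, and the key inequality
\[
N - \deg P_2 + 1 \;\geq\; \deg P_1 + 1 \;>\; \deg P_1 - d_0 = \deg Q_1,
\]
which uses the hypothesis $\deg P_1 + \deg P_2 \leq N$, shows that these first entries provide enough initial data to force $Z \equiv 0$.

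Finally, $Y_\infty$ is annihilated by two coprime polynomials in $\Q[x]$; Bezout produces $U, V \in \Q[x]$ with $U Q_1 + V Q_2 = 1$, and applying the resulting operator identity to $Y_\infty$ yields
\[
Y_\infty = U(T) Q_1(T) Y_\infty + V(T) Q_2(T) Y_\infty = 0,
\]
which restricted to the first $N - d_0 + 1$ entries gives $Y = 0$ as required. The main place where care is needed will be the bookkeeping of sequence lengths: verifying precisely that $(Q_i D)(T) X$ and $Q_i(T)\bigl(D(T) X\bigr)$ agree on the same index range, and that the extension $Y \rightsquigarrow Y_\infty$ is forced on the overlap. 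Both of these work out exactly because the factorization $\deg P_i = d_0 + \deg Q_i$ aligns the recurrence window with the available data.
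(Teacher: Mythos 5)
Your argument is correct. Note first that the paper itself gives no proof of this lemma: it is imported verbatim from Konyagin (\cite{Kon-RW}, Lemma~5), so there is no in-paper proof to compare against; what you have written is a self-contained substitute for the citation. Your reduction is the natural one: writing $P_i=DQ_i$ with $Q_1,Q_2$ coprime in $\Q[x]$, checking the associativity $(Q_iD)(T)X=Q_i(T)\bigl(D(T)X\bigr)$ on the index range $0\le j\le N-\deg P_i$, extending $Y=D(T)X$ to an infinite sequence by the $Q_1$-recurrence (legitimate and consistent because the leading coefficient of $Q_1$ is nonzero and $Q_1(T)Y=0$ holds exactly on $0\le j\le N-\deg P_1$), and then killing $Z=Q_2(T)Y_\infty$ by observing that it satisfies the order-$\deg Q_1$ recurrence with $N-\deg P_2+1\ge \deg P_1+1>\deg Q_1$ vanishing initial terms. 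The Bezout step then finishes it. All the index bookkeeping you flagged as delicate does check out: in particular the consistency of the extension uses precisely that $Q_1(T)Y=0$ is known for $j\le N-\deg P_1$, which is the full range needed to reproduce $Y_{e_1},\dots,Y_{N-d_0}$ from the first $e_1=\deg Q_1$ entries. This is essentially the standard linear-recurrence/Bezout argument one would expect behind Konyagin's lemma (his write-up is organized around the Euclidean algorithm rather than an explicit extension to an infinite sequence, but the content is the same), and the hypothesis $\deg P_1+\deg P_2\le N$ enters exactly where it must.
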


\begin{corollary}\label{cr:Lambda}
Let $X=(x_0,\ldots, x_N)$ be a sequence of integers and suppose that $\Lambda(X)$
contains a non-zero polynomial of degree at most $N/2$.
Then there is a unique (up to multiplication by $\pm1$) non-zero polynomial $P_0\in\Lambda(X)$ of minimal degree and with relatively prime coefficients.
Furthermore, a polynomial $P\in\Z[x]$ of degree at most $N-\deg P_0$ is contained in $\Lambda(X)$ if and only if $P_0|P$.
\end{corollary}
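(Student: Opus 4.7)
My plan is to derive this as a direct consequence of Lemma \ref{lm:Lambda}, proceeding in two stages corresponding to the two assertions. Throughout, I will exploit the fact that $\Lambda(X)$ is closed under the operation of dividing the coefficient vector by its content, since the defining relations $a_0 x_j + \ldots + a_e x_{j+e} = 0$ are $\Z$-linear.

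First I will establish the existence and uniqueness of $P_0$. For existence, I pick any nonzero polynomial in $\Lambda(X)$ of smallest possible degree; by hypothesis this degree is at most $N/2$, and dividing out the gcd of its coefficients produces a primitive representative still in $\Lambda(X)$. For uniqueness, suppose $P_0'$ is another primitive polynomial in $\Lambda(X)$ of the same minimal degree. Since $\deg P_0 + \deg P_0' \le N$, Lemma \ref{lm:Lambda} applies and yields $D := \gcd(P_0, P_0') \in \Lambda(X)$. Because $D \ne 0$ and $\deg D \le \deg P_0$, minimality forces $\deg D = \deg P_0$, so $D$ is an associate in $\Q[x]$ of both $P_0$ and $P_0'$; primitivity then gives $P_0 = \pm P_0'$.

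Second I will prove the divisibility characterization. For the ``if'' direction, suppose $P = P_0 Q$ with $\deg P \le N - \deg P_0$. Writing $P_0 = \sum_i a_i x^i$ and $Q = \sum_k b_k x^k$ and collecting terms, the recurrence attached to $P$ at index $j$ becomes
\[
\sum_m \Big(\sum_{i+k=m} a_i b_k\Big) x_{j+m} = \sum_k b_k \sum_i a_i x_{j+k+i},
\]
and each inner sum vanishes because $P_0 \in \Lambda(X)$ and $j + k \le j + \deg Q \le N - \deg P_0$ whenever $j \le N - \deg P$. For the ``only if'' direction, if $P \in \Lambda(X)$ with $\deg P \le N - \deg P_0$, then $\deg P_0 + \deg P \le N$, so Lemma \ref{lm:Lambda} gives $\gcd(P_0, P) \in \Lambda(X)$; by minimality this gcd has degree $\deg P_0$, hence is an associate of $P_0$, and therefore $P_0 \mid P$.

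The only delicate point is keeping track of degrees so that every appeal to Lemma \ref{lm:Lambda} meets its hypothesis $\deg P_1 + \deg P_2 \le N$. The stated bound $N - \deg P_0$ is exactly what makes both directions go through (uniqueness using $2\cdot(N/2) \le N$, and the ``only if'' direction using $\deg P_0 + (N - \deg P_0) = N$), so beyond this degree bookkeeping there is no real obstacle.
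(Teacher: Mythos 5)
Your proof is correct and follows exactly the route the paper intends: the paper states this as an immediate consequence of Lemma \ref{lm:Lambda} without writing out the details, and your argument (primitive minimal-degree element via $\Z$-linearity of the defining relations, uniqueness and the ``only if'' direction via the gcd lemma with the degree bookkeeping $\deg P_1+\deg P_2\le N$, and the ``if'' direction by direct expansion of the convolution of coefficients) supplies precisely those details. No gaps.
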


In the proof of Proposition \ref{pr:Konyagin} below, we will use the above results for the sequence $x_n=\wt S_n$.
Under the hypothesis \eqref{eq:hyp-Kon}, we will show that there are many polynomials $P(x)=a_0+ \ldots +a_e x^e$
such that $|a_0 \wt S_{j}+\ldots+a_e \wt S_{e+j}|<Q$ for all $j$ in a certain range.
Using the pigeonhole principle, we will find a polynomial that in addition satisfies
\begin{equation}\label{eq:modQ}
a_0 S_{j}+\ldots+a_e  S_{e+j}=0
\end{equation}
in the same range for $j$'s.
These two properties imply that $P\in\Lambda(X)$ for $X=(S_n)$. The next lemma shows that it is enough to satisfy \eqref{eq:modQ} for a smaller range of $j$'s.

\begin{lemma}\label{lm:modp}
Let $m\in\Z_{>0}$ and let $p$ be a prime.
For each $1\le j\le m$, let $\a_j,\b_j\in\F_{p}$.
Write
\[
T_n=\sum_{j=1}^m \b_j\a_j^n
\]
for each $i\in\Z_{\ge 0}$.
Let $P(x)=a_0+ \ldots +a_ex^e\in\F_p[x]$ be a polynomial.

Suppose that the elements $\a_j$ are pairwise distinct
and $\beta_j\neq 0$ for all $j$.
Suppose further that
\[
a_0T_{n}+a_1T_{n+1}+\ldots+a_{e}T_{n+e}=0
\]
for all $n=0,\ldots,m-1$.

Then $P(\a_j)=0$ for all $1\le j\le m$
and
\[
a_0T_{n}+a_1T_{n+1}+\ldots+a_{e}T_{n+e}=0
\]
for all $n\in\Z_{\ge 0}$.
\end{lemma}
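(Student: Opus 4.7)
The strategy is to turn the hypothesis into a Vandermonde linear-algebra statement, deduce that $P$ vanishes at each $\alpha_j$, and then propagate the recurrence to all indices.

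First, I would interchange the order of summation in the quantity $\sum_{k=0}^{e} a_k T_{n+k}$. Using $T_{n+k} = \sum_j \beta_j \alpha_j^{n+k}$, this equals $\sum_{j=1}^m \beta_j \alpha_j^n \cdot P(\alpha_j)$. So if I set $\gamma_j := \beta_j P(\alpha_j) \in \F_p$, the hypothesis becomes the linear system
\[
\sum_{j=1}^m \gamma_j \alpha_j^n = 0, \qquad n = 0, 1, \ldots, m-1.
\]

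Second, I would observe that the coefficient matrix of this system is the $m \times m$ Vandermonde matrix $(\alpha_j^n)_{0 \le n \le m-1,\, 1 \le j \le m}$, whose determinant is $\prod_{1 \le i < j \le m}(\alpha_j - \alpha_i)$. By the pairwise-distinctness hypothesis this determinant is nonzero in $\F_p$, so $\gamma_j = 0$ for every $j$. Since $\beta_j \neq 0$, this forces $P(\alpha_j) = 0$ for every $j = 1, \ldots, m$, which is the first conclusion.

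Third, substituting $P(\alpha_j) = 0$ back into the identity from step one gives $\sum_{k=0}^e a_k T_{n+k} = \sum_j \beta_j \cdot 0 \cdot \alpha_j^n = 0$ for every $n \in \Z_{\ge 0}$, which is the second conclusion. There is really no substantial obstacle in this lemma: once the Vandermonde reduction is identified, both claims fall out simultaneously. The only minor bookkeeping point is that one of the $\alpha_j$ may equal $0$ (at most one, by distinctness), in which case one uses the convention $0^0 = 1$ in the degenerate entries; this does not affect the Vandermonde determinant formula and the argument goes through unchanged.
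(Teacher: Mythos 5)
Your proof is correct and follows essentially the same route as the paper: rewrite $\sum_k a_k T_{n+k}$ as $\sum_j \beta_j\alpha_j^n P(\alpha_j)$, invoke the nonvanishing of the Vandermonde determinant to conclude $\beta_j P(\alpha_j)=0$ and hence $P(\alpha_j)=0$, then substitute back to get the recurrence for all $n$. The remark about a possible $\alpha_j=0$ is a harmless extra precaution not spelled out in the paper.
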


\begin{proof}
The hypothesis of the lemma implies that
\begin{equation}\label{eq:hyp}
\sum_{j=1}^m\b_j \a_j^nP(\a_j)=0,
\end{equation}
for each $n=0,\ldots, m-1$.

We note that the vectors $(\b_1 \a_1^n,\ldots, \b_m \a_m^n)$ for $n=0,\ldots, m-1$ are
linearly independent, as can be seen using Vandermonde determinants.
Hence the system of linear equations \eqref{eq:hyp} in $P(\a_j)$
as variables has only the trivial solution, that is $P(\a_j)=0$ for each $j$, which proves the first claim.
In addition,
\[
a_0T_{n}+a_1T_{n+1}+\ldots+a_{e}T_{n+e}=\sum_{j=1}^m \b_j\a_j^n P(\a_j)=0
\]
for each $n\in\Z_{\ge 0}$ and this establishes the second claim.
\end{proof}

\begin{proof}[Proof of Proposition \ref{pr:Konyagin}]
Set $E=3\lfloor \log Q^D\rfloor$. Note that  $1\leq E\leq L/3$ and $2^E > Q^D \ge 3$ and  $\lfloor L/6E\rfloor \ge 4\log E$.
Our first aim is to show that there is a polynomial $P_1\neq 0$ of degree at most $E$ 
such that $P_1\in\Lambda_{E}(\{\wt S_n\}_{n=0}^L)$.

Let $\xi_0,\ldots, \xi_E$ be a sequence of independent, unbiased $\pm1$ valued
random variables.
For any $n=0,\ldots, L-E$, we have
\[
\P\Big(\Big|\sum_{j=0}^{E} \xi_j \wt S_{j+n}\Big|\ge \frac{Q}{2}\Big)\le 2\exp\Big(-\frac{(Q/2)^2}{2 \sum _0^E  \wt S_{j+n}^2 }\Big)\le \frac{1}{2L}
\]
by Hoeffding's inequality and our assumption \eqref{eq:hyp-Kon}.

Therefore
the set
\begin{align*}
\Omega:=\Big\{x=(x_0,\ldots,x_E)\in\{-1,1\}^{E+1}:&\Big|\sum_{j=0}^{E} x_j \wt S_{j+n}\Big|< \frac{Q}{2}\\
&\text{for all $n=0,\ldots, L-E$}\Big\}
\end{align*}
has cardinality more than $2^{E+1}/2> Q^D$.

By the pigeonhole principle, it follows that there are $x\neq y\in\Omega$
such that
\[
\sum_{j=0}^{E} x_j \wt S_{j+n}=\sum_{j=0}^{E} y_j \wt S_{j+n}
\]
for all $n=0,\ldots,D-1$.
We set $a_j=(x_j-y_j)/2\in \{-1,0,1\}$. It follows from Lemma \ref{lm:modp} applied $M$ times to each $T^i_n:=\sum_{j=1}^{m_i} \beta_{i,j}\alpha_{i,j}^n$ with the polynomial $P=a_0+\ldots+a_Ex^E \in \Z[x]$ that
\[
\sum_{j=0}^{E} a_j  S_{j+n}=0 \in \Z/Q\Z
\]
for all $n\in\Z_{\ge 0}$.
Since $x,y\in\Omega$ and $a_j=(x_j-y_j)/2$, we know that
\[
\Big|\sum_{j=0}^{E} a_j \wt S_{j+n}\Big|<Q/2
\]
and hence
\[
\sum_{j=0}^{E} a_j \wt S_{j+n}=0
\]
for any $n=0,\ldots, L-E$.
This means that
\[
P_1(x):=a_E x^E+\ldots+a_1x+a_0\in \Lambda_{E}(\{\wt S_n\}_{n=0}^L)\subset \Lambda(\{\wt S_n\}_{n=0}^{\lceil 2L/3\rceil}),
\]
because $E\le L/3$.

Since $\lceil 2L/3\rceil\ge 2E$, by Corollary \ref{cr:Lambda}
there is a unique (up to multiplication by $\pm1$) polynomial $P_0\in\Lambda(\{\wt S_n\}_{n=0}^{\lceil 2L/3\rceil})$
with relatively prime coefficients and of minimal degree.
By Lemma \ref{lm:modp}, $P_0(\a_{i,j})=0$ for all $i$ and $j$.
Then for each $i$ and $j$ there is an irreducible factor  $P_{i,j}$ of $P_0$ over $\Z$
such that $P_{i,j}(\a_{i,j})=0$.

Fix $i$ and $j$. We already know that $\deg P_{i,j}\le E$, and we set out to prove $M(P_{i,j})\le (D\log Q)^{30 D\log Q/L}$.
Write $\{\b_k\}_{k=1}^{\deg P_{i_j}}$ for the roots of $P_{i,j}$.
We set $s=\lfloor L/6E\rfloor$.
Since $L\ge 200 D\log Q\log(D\log Q)$, we have $s\ge 4\log E$.
By Lemma \ref{lm:q}, there is a prime $q\in (s,2s]$ such that $\b_k/\b_l$ is not a $q$-th root of unity for any $k\neq l$.
This means that the numbers $\{\b_k^q\}_{k=1}^{\deg P_{i,j}}$ are all distinct.
Note that $q\le 2s\le L/3E$.

We now employ the same argument as above and find a non-zero polynomial $P_2$ of the form $P_2(x)=Q_2(x^q)$ for some $Q(x)=b_0+\ldots+b_Ex^E \in \Z[x]$ with $b_j\in \{-1,0,1\}$ for $j=1,\ldots,E$ and 
\[
\sum_{k=0}^{E} b_k \wt S_{kq+n}=0
\]
for any $n=0,\ldots, L-Eq$. Hence $P_2\in\Lambda_{Eq}(\{\wt S_n\}_{n=0}^L)\subset \Lambda(\{\wt S_n\}_{n=0}^{\lceil 2L/3\rceil})$, because $Eq\le L/3$.

Since $\lceil 2L/3\rceil\ge 2 Eq$, we have $P_0|P_2$ by Corollary \ref{cr:Lambda}.
This means that $\{\beta_k^q\}_{k=1}^{\deg P_{i,j}}$ are all roots of the polynomial $Q_2$, and since they are all distinct we get:
\[
M(P_{i,j})^q\le M(Q_2)\le (E+1)^{1/2}.
\]
where the right hand side follows from \eqref{measure-bounds}. Since $q>s=\lfloor L/6E\rfloor$, $2q\ge L/3E$, we get
\[
M(P_{i,j})\le (E+1)^{1/2q}\le(E+1)^{3E/L}\le (D\log Q)^{30 D\log Q/L}.
\]
\end{proof}

\begin{remark}\label{rm:Komlos}
Let $u_1,\ldots,u_m\in \R^n$ be a sequence of vectors with $\|u_j\|\le 1$.
A conjecture of Koml\'os asserts that there is an absolute constant $C$ such that
for each such sequence of vectors, there is a sequence of signs $\omega_j=\pm1$
such that $\|\omega_1u_1+\ldots+\omega_mu_m\|_\infty<C$.
In this remark, we point out that if this conjecture holds, then assumption \eqref{eq:hyp-Kon}
in Proposition \ref{pr:Konyagin}, may be relaxed to an upper bound of the form $cQ^2$, where
$c$ is an absolute constant.
Unfortunately, the best known result towards Koml\'os's conjecture in \cite{Ban-Komlos}
yields no improvement.

We take a sequence $y=(y_0,\ldots, y_{E})\in\{0,1\}^{E+1}$
and we will apply the conjecture to the vectors $u_j=(y_j\wt S_j,\ldots, y_{j}\wt S_{L-E+j})$ for $j=0,\ldots, E$.
Under the weakened hypothesis $\sum \wt S_n^2\le cQ^2$, Koml\'os's conjecture implies that for each choice of $y$,
there is $\omega(y)=(\omega_0(y),\ldots, \omega_{E}(y))\in\{\pm1\}^{E+1}$ such that
\[
\Big|\sum_{j=0}^E \omega_j(y)y_j\wt S_{j+i}\Big|<Q/2
\] 
for all $i=0,\ldots, L-E$.
Now we see that the collection of sequences of the form $(\omega_0(y)y_0,\ldots, \omega_{E}(y)y_E)\in\{-1,0,1\}^E$
may be used in the place of $\Omega$ in the proof of Proposition \ref{pr:Konyagin} to obtain the same result
under the weaker hypothesis.
\end{remark}

Now we turn to the proof of Proposition \ref{pr:Fourier}.

Given a measure $\mu$ on $\Z$ and $x\in V$ we write $\mu.\d_x$ for the measure on $V$ defined by
\[
\sum_{a\in\Z} \mu(a)\d_{ax}.
\]
With this notation, we can write
\[
\nu_\a^{(l_1,l_2)}=\mu.\d_{\a^{l_1+1}}*\ldots*\mu.\d_{\a^{l_2}}
\]
for any $0\le l_1<l_2<d$.

Recall the notation $\wt a$, which is the unique integer representative of $a\in\Z/Q\Z$ in the interval $(-Q/2,Q/2]$.
For typographical convenience we will also use the notation $[a]^{\sim}$ with the same meaning.

\begin{lemma}\label{lm:F-single}
Let $\mu$ be a measure on $\Z$ and let $x,\beta\in V$.
Then
\[
|\wh{\mu.\d_x}(\beta)|\le
\exp\Big(-\sum_{a_1,a_2\in\Z} \mu(a_1)\mu(a_2)
\Big(\Big[\Psi\circ \tr ((a_1-a_2)\b x)\Big]^{\sim}\Big)^2/Q^2 \Big).
\]
\end{lemma}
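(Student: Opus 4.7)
The plan is to prove this by a straightforward Fourier-analytic computation. First I would unfold the definition of $\wh{\mu.\d_x}(\b)$. Using additivity of $\Psi\circ\tr$ together with the fact that scalars $a\in\Z$ pull through,
\[
\wh{\mu.\d_x}(\b)=\sum_{a\in\Z}\mu(a)\,\chi_\b(ax)=\sum_{a\in\Z}\mu(a)\,e_Q\bigl(a\cdot z\bigr),\qquad z:=\Psi\circ\tr(\b x)\in\Z/Q\Z.
\]

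Next I would square the modulus and use that the resulting double sum is manifestly real (it is invariant under swapping $a_1$ and $a_2$, hence equals its complex conjugate). This gives
\[
|\wh{\mu.\d_x}(\b)|^2=\sum_{a_1,a_2\in\Z}\mu(a_1)\mu(a_2)\cos\!\left(\tfrac{2\pi [(a_1-a_2)z]^\sim}{Q}\right),
\]
where I have replaced $(a_1-a_2)z\in\Z/Q\Z$ by its lift $[(a_1-a_2)z]^\sim\in(-Q/2,Q/2]$, which is legitimate since $\cos(2\pi y/Q)$ depends only on $y\bmod Q$. Note $[(a_1-a_2)z]^\sim=[\Psi\circ\tr((a_1-a_2)\b x)]^\sim$, matching the quantity in the lemma.

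Then I would combine two elementary inequalities: $\cos\theta=1-(1-\cos\theta)\le\exp(-(1-\cos\theta))$ (from $1-t\le e^{-t}$), giving
\[
|\wh{\mu.\d_x}(\b)|^2\le\exp\!\left(-\sum_{a_1,a_2}\mu(a_1)\mu(a_2)\bigl(1-\cos(2\pi u_{a_1,a_2})\bigr)\right),
\]
with $u_{a_1,a_2}:=[(a_1-a_2)z]^\sim/Q\in(-1/2,1/2]$; and the pointwise estimate $1-\cos(2\pi u)=2\sin^2(\pi u)\ge 8u^2$ valid for $|u|\le 1/2$, which follows from $|\sin y|\ge(2/\pi)|y|$ on $[-\pi/2,\pi/2]$. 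Combining these and taking square roots produces the stated bound (in fact with a factor of $4$ to spare in the exponent; the lemma only claims the weaker form).

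There is no real obstacle here: the argument is purely mechanical once one is careful about the two conventions, namely (i) that $\Psi\circ\tr$ is additive and commutes with multiplication by integers, so $\wh{\mu.\d_x}(\b)$ genuinely reduces to a one-dimensional character sum in the variable $z\in\Z/Q\Z$, and (ii) that the replacement of $(a_1-a_2)z\in\Z/Q\Z$ by its representative in $(-Q/2,Q/2]$ is harmless inside the cosine but essential for the final quadratic lower bound.
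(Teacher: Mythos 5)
Your argument is correct and is essentially the paper's own proof: both expand $|\wh{\mu.\d_x}(\b)|^2$ as a double sum over $(a_1,a_2)$, bound the real part of the character by $1-c\,\wt{y}^{\,2}/Q^2$ (you get the constant $8$ via $1-\cos(2\pi u)=2\sin^2(\pi u)\ge 8u^2$ where the paper uses $2$), and finish with $1-t\le e^{-t}$, using that $\mu$ has total mass one. The only cosmetic caution is that the inequality $\cos\theta\le\exp(-(1-\cos\theta))$ must be applied to the aggregated sum (i.e.\ to $1-\sum\mu(a_1)\mu(a_2)(1-\cos)$), not term by term, since Jensen would run the wrong way; your displayed conclusion is the correct aggregated form.
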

\begin{proof}
By definition, we have
\[
|\wh{\mu.\d_x}(\beta)|^2=\sum_{a_1,a_2\in\Z}\mu(a_1)\mu(a_2) \chi_\beta((a_1-a_2)x)
\]
Since $|\wh{\mu.\d_x}(\beta)|^2\in\R$ and 
$\Re(e_Q(a))\le 1-2\wt{a}^2/Q^2$ if $a \in \Z/Q\Z$, via \eqref{chibet} we get:
\[
|\wh{\mu.\d_x}(\beta)|^2 \leq 1 - 2\sum_{a_1,a_2\in\Z}\mu(a_1)\mu(a_2) \Big[\Psi\circ \tr ((a_1-a_2)\b x)\Big]^{\sim}\Big)^2/Q^2 
\]
On the other hand $1-t \leq \exp(-t)$ if $t \in [0,1]$, so the claim follows.
\end{proof}

\begin{proof}[Proof of Proposition \ref{pr:Fourier}]
For $\g\in V$, write as earlier
\[
S_n(\g)=\Psi \circ \tr(\g \a^n) \in \Z/Q\Z.
\]
Using
\[
|\wh{\nu_\a^{(l_1,l_2)}}(\beta)|=\Big|\prod_{n=l_1+1}^{l_2}\wh{\mu.\d_{\a^n}}(\b)\Big|,
\]
Lemma \ref{lm:F-single} implies 
\[
|\wh{\nu_\a^{(l_1,l_2)}}(\beta)|\le\exp\Big(-\sum_{n=l_1+1}^{l_2}\sum_{a_1,a_2\in\Z} \mu(a_1)\mu(a_2) (\wt S_n((a_1-a_2)\b))^2/Q^2\Big).
\]

Using $l_2-l_1\ge L$ and then Proposition \ref{pr:Konyagin}, we can write
\[
\sum_{n=l_1+1}^{l_2}(\wt S_n((a_1-a_2)\beta))^2\ge
\sum_{n=0}^{L}(\wt S_n((a_1-a_2)\beta\a^{l_1+1}))^2>\frac{Q^2}{2\log(4L)}
\]
for all pairs $a_1\neq a_2$.
Since $\supp\mu\subset(-p_i/2,p_i/2)$ for all $i$, we know that $a_1-a_2$ is non zero in $\F_{p_i}$ for all $i$ whenever
$a_1\neq a_2$ in $\Z$.

We note
\[
\sum_{a_1\neq a_2} \mu(a_1)\mu(a_2)=1-\|\mu\|_2^2.
\]
Therefore
\[
|\wh{\nu_\a^{(l_1,l_2)}}(\beta)|\le\exp\Big(-(1-\|\mu\|_2^2)\frac{Q^2}{2\log(4L)}\cdot\frac{1}{Q^2}\Big),
\]
as claimed.
\end{proof}

\subsection{Proof of Proposition \ref{pr:equi-dist}}\label{sc:equi-dist}

We note that $\nu(0)=\sum_{\beta \in V} \wh\nu(\beta)/|V|$ for any measure $\nu$ on $V$. Hence, for any $A\subset\cA_\kappa$:
\begin{equation}\label{eq:target}
\Bigg|\sum_{\a\in A}\nu_\a^{(d)}(0)-\frac{|A|}{|V|}\Bigg|
\le \frac{1}{|V|}\sum_{\a\in A}\sum_{\beta\in V\backslash\{0\}}|\wh\nu_\a^{(d)}(\beta)|.
\end{equation}

We begin by finding a preliminary estimate for
\[
\sum_{\a\in A}\|\nu_\a^{(d_1,d_2)}\|^2_2\le
\sum_{\a\in V}\|\nu_\a^{(d_1,d_2)}\|^2_2.
\]
and then use the Cauchy-Schwarz inequality to convert it into an estimate on the
right hand side of \eqref{eq:target}.
Let $(A_n)_{n=0,\ldots, d}$ and
$(A_n')_{n=0,\ldots,d}$ be  sequences of independent random variables with the same law as $(X_n)_{n=0,\ldots,d}$.
We observe that
\begin{align*}
\sum_{\a\in V}\|\nu_\a^{(d_1,d_2)}\|^2_2=&\sum_{\a\in V}\P(A_{d_1+1}\a^{d_1+1}+\ldots+A_{d_2}\a^{d_2}
=A_{d_1+1}'\a^{d_1+1}+\ldots+A_{d_2}'\a^{d_2})\\
=&\E(\#\{\a\in V: A_{d_1+1}\a^{d_1+1}+\ldots+A_{d_2}\a^{d_2}=A_{d_1+1}'\a^{d_1+1}+\ldots+A_{d_2}'\a^{d_2} \}).
\end{align*}
If $A_j\neq A_j'$ for at least one $j\in(d_1,d_2]$, then the polynomial
\[
(A_{d_1+1}-A_{d_1+1}')x^{d_1+1}+\ldots+(A_{d_2}-A_{d_2}')x^{d_2}
\]
has at most $d_2-d_1$ roots in any given field. This means that for such $A_j$ and $A_j'$
\[
\#\{\a\in V: A_{d_1+1}\a^{d_1+1}+\ldots+A_{d_2}\a^{d_2}=A_{d_1+1}'\a^{d_1+1}+\ldots+A_{d_2}'\a^{d_2} \}
\le (d_2-d_1)^{MD}
\]
and
\[
\sum_{\a\in V}\|\nu_\a^{(d_1,d_2)}\|^2_2\le (d_2-d_1)^{MD} +  |V| \sum_{a \in \Z} \mu(a)^{2(d_2-d_1)} \leq (d_2-d_1)^{MD} +  |V|(1-\tau)^{(d_2-d_1)}.
\]

We set $d_0=\lceil -\log(|V|)/\log(1-\tau)\rceil$, and obtain
\[
\frac{1}{|V|}\sum_{\a\in V}\sum_{\beta\in V}|\wh\nu_\a^{(d,d+d_0)}(\beta)|^2=\sum_{\a\in V}\|\nu_\a^{(d,d+d_0)}\|^2_2\le2 d_0^{MD}
\]
for all $d$.
We note that $\nu^{(2d_0)}_\a=\nu^{(-1,0)}_\a*\nu^{(0,d_0)}_\a*\nu^{(d_0,2d_0)}_\a$.
Therefore for each $\a\in V\setminus\{0\}$, we have, since $|\wh\nu_\a^{(-1,0)}(\beta)|\leq 1$,
\begin{align*}
\frac{1}{|V|}\sum_{\beta\in V}|\wh\nu_\a^{(2d_0)}(\beta)|
\le&\frac{1}{|V|}\sum_{\beta\in V}|\wh\nu_\a^{(0,d_0)}(\beta)\cdot\wh\nu_\a^{(d_0,2d_0)}(\beta)|\\
\le&\Big[\frac{1}{|V|}\sum_{\beta\in V}|\wh\nu_\a^{(0,d_0)}(\beta)|^2\Big]^{1/2}\cdot
\Big[\frac{1}{|V|}\sum_{\beta\in V}|\wh\nu_\a^{(d_0,2d_0)}(\beta)|^2\Big]^{1/2}
\end{align*}
This gives us by another application of Cauchy-Schwarz
\begin{align}\label{eq:F-L1}
\frac{1}{|V|}\sum_{\a\in A}&\sum_{\beta\in V}|\wh\nu_\a^{(2d_0)}(\beta)|\nonumber\\
\le&\Big[\frac{1}{|V|}\sum_{\a\in A}\sum_{\beta\in V}|\wh\nu_\a^{(0,d_0)}(\beta)|^2\Big]^{1/2}\cdot
\Big[\frac{1}{|V|}\sum_{\a\in A}\sum_{\beta\in V}|\wh\nu_\a^{(d_0,2d_0)}(\beta)|^2\Big]^{1/2}\nonumber\\
\le& 2d_0^{MD}.
\end{align}

Now we set $d_1=\lceil \frac{200}{\kappa}\log (Q^D)\log\log (Q^D)\rceil$. If $\alpha \in A \subset\cA_{\kappa}$, then
$\a_{i,j}$ is not a root of a polynomial of degree at most $3\log Q^D$ with Mahler measure
at most $(\log Q^D)^{30\log (Q^D)/d_1}$, and we also have
$d_1\ge 200 \log Q^D\log(\log Q^D)$. Therefore, we can apply Proposition \ref{pr:Fourier} with $L=d_1$ and get
\begin{equation}\label{eq:F-Linfty}
|\wh \nu_\a^{(d,d+d_1)}(\beta)|\le \exp\Big(-\frac{\tau}{8\log(4d_1)}\Big)
\end{equation}
for all $d$, $\a\in \cA_\kappa$ and $\beta\in V\backslash\{0\}$.
(If $\beta$ has some $0$ coordinates, then $V$ splits as a direct sum $V=V_0 \oplus V_1$, with $\beta \in V_1$ having no non-zero coordinate in $V_1$, and  we need to apply the proposition to $V_1$ and the projected random walk on $V_1$ modulo $V_0$. )

Now suppose that $d> 2d_0+K d_1$ for some $K \in \Z_{\ge 0}$ and write
\[
|\wh\nu^{(d)}_\a(\beta)|\le|\wh\nu_\a^{(2d_0)}(\beta)|\cdot|\wh\nu_\a^{(2d_0,2d_0+d_1)}(\beta)|\cdots|\wh\nu_\a^{(2d_0+(K-1)d_1,2d_0+Kd_1)}(\beta)|.
\]
We combine \eqref{eq:F-L1} with \eqref{eq:F-Linfty} and obtain
\[
\frac{1}{|V|}\sum_{\a\in \cA_{\kappa}}\sum_{\beta\in V}|\wh\nu_\a^{(d)}(\beta)|\le2 d_0^{MD}\exp\Big(-K\frac{\tau}{8\log(4d_1)}\Big).
\]

By the assumption on $d$ in the proposition, we can take $K>d/2d_1$ and a simple calculation yields that
\[
2d_0^{MD}<\exp\Big(K\frac{\tau}{16\log(4d_1)}\Big)
\]
and hence we obtain the claim of the proposition.
In the interest of these calculations, it is useful to note that the lower bounds on $\log Q^D$ in terms of $\kappa$ and $\tau$
that we assumed in the proposition implies that
\[
\max(\log d_0, \log d_1)\le C\log\log Q^D.
\]

\subsection{The case \texorpdfstring{$\a=2$}{a=2}}\label{sc:a2}

In this section, we consider the special case $V=\F_{p_1}\oplus\ldots\oplus\F_{p_M}$ and $\a_{i,1}=2$ for all $i$.
We write $\nu_2^{(d_1,d_2)}$ for the measure $\nu_\a^{(d_1,d_2)}$ with the above choice of $V$ and $\a$.
We will use this case later to estimate the probability that $P(2)$ is a proper power for a random polynomial,
which, in turn, yields an estimate for the probability that $P$ is a proper power of a polynomial.

Our main result is the following.

\begin{proposition}\label{pr:a2}
Let $\tau>0$ and assume $\|\mu\|_2^2\le 1-\tau$.
Suppose further that $\supp\mu\subset(-p_i/2,p_i/2)$ for each $i=1,\ldots,M$.

There is an absolute constant $C>0$ such that
for all $x\in V$ and $d\ge \frac{1}{\tau} (C \log(Q))^2$, we have
\[
|\nu_2^{(d)}(x)-Q^{-1}|\le Q^{-10}.
\]
\end{proposition}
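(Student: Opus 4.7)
The plan is to view the walk through the additive (in fact ring) isomorphism $\Psi:V\to\Z/Q\Z$: since each $p_i\geq 5$ we have $\gcd(2,Q)=1$, and $\Psi$ intertwines multiplication by $2^n$ on both sides, so $\nu_2^{(d)}$ is the pushforward of the law of $S_d=\sum_{n=0}^d X_n 2^n\bmod Q$. By Fourier inversion, writing $b=\Psi(\beta)$ for the character parameter,
\[
\Big|\nu_2^{(d)}(x)-\frac{1}{Q}\Big|\leq\frac{1}{Q}\sum_{\beta\in V\setminus\{0\}}|\wh\nu_2^{(d)}(\beta)|,
\]
so it suffices to show $|\wh\nu_2^{(d)}(\beta)|\leq\exp(-c\tau d/\log Q)$ for every nonzero $\beta$; the assumption $d\geq\tau^{-1}(C\log Q)^2$ for $C$ large enough then yields the required $Q^{-10}$ after summing.

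For the per-character estimate, I would use $\wh\nu_2^{(d)}(\beta)=\prod_{n=0}^d\wh{\mu.\delta_{2^n}}(\beta)$ and apply Lemma \ref{lm:F-single}, obtaining
\[
|\wh\nu_2^{(d)}(\beta)|\leq\exp\Big(-\frac{1}{Q^2}\sum_{n=0}^d\sum_{a_1,a_2\in\Z}\mu(a_1)\mu(a_2)\bigl[(a_1-a_2)b\cdot 2^n\bigr]^{\sim 2}\Big).
\]
The off-diagonal pair mass is $1-\|\mu\|_2^2\geq\tau$, and the support hypothesis $\supp\mu\subset(-p_i/2,p_i/2)$ combined with $b\neq 0$ forces $(a_1-a_2)b\not\equiv 0\pmod Q$ for every off-diagonal pair: indeed, $|a_1-a_2|<p_i$ gives $a_1-a_2\not\equiv 0\pmod{p_i}$, and some component $\beta_i$ of $\beta$ is nonzero in $\F_{p_i}$.

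The heart of the argument, and what I expect to be the main (though short) point, is the elementary dynamical lemma that for every nonzero $c\in\Z/Q\Z$ and every $d\geq 1$,
\[
\sum_{n=0}^{d-1}[c\cdot 2^n]^{\sim 2}\geq\frac{d\,Q^2}{C\log Q}
\]
for an absolute $C$. To prove it I would note that while $|[c\cdot 2^n]^\sim|<Q/4$, the map $y\mapsto 2y\bmod Q$ exactly doubles the symmetric representative, so any window of $\lceil\log_2 Q\rceil+1$ consecutive indices contains an $n$ with $|[c\cdot 2^n]^\sim|\geq Q/4$, contributing at least $Q^2/16$ to the sum; partitioning $\{0,\ldots,d-1\}$ into such windows gives the bound. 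Inserting this lemma into the previous display, applied to each value $c=(a_1-a_2)b$, produces the per-character decay and completes the proof. No serious obstacle is anticipated: the entire argument rests on Fourier inversion, Lemma \ref{lm:F-single}, and the doubling observation above.
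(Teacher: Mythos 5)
Your argument is correct and is essentially the paper's own proof: both rest on Fourier inversion, Lemma \ref{lm:F-single}, and the observation that the symmetric representative of $c\cdot 2^n$ doubles exactly until it exceeds $Q/4$, so every window of about $\log_2 Q$ consecutive indices contributes at least $Q^2/16$ to the quadratic form (the paper packages this per block as Lemma \ref{lm:a2} and multiplies the block bounds, whereas you sum all contributions inside a single exponent --- a purely organizational difference). The one small slip is that in the paper's setup $X_0$ and $X_d$ need not have law $\mu$, so the factors $n=0$ and $n=d$ in your product $\prod_{n=0}^d\wh{\mu.\delta_{2^n}}(\beta)$ should simply be bounded by $1$ and discarded, restricting the sum to $1\le n\le d-1$; this changes nothing in the final estimate.
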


The study of this case goes back to Chung, Diaconis and Graham \cite{CDG-RW}, who obtained very precise estimates
for the mixing time, which are much better than the bound $\log(Q)^2$ implied by the above result.
However, our application requires strong bounds for the distance between $\nu_2^{(d)}$ and the uniform
distribution, which was not considered in \cite{CDG-RW}.
Nevertheless, our proof draws on the ideas of \cite{CDG-RW} heavily.

We begin with a lemma on the Fourier coefficients of $\nu_2^{(d)}$.
Its proof relies on Lemma \ref{lm:F-single} and on the elementary fact that a sequence of the form
$\widetilde{\beta},\wt{\beta\cdot 2},\ldots [\beta\cdot 2^{\lfloor\log_2 Q\rfloor}]^\sim$ cannot stay below $Q/4$.

\begin{lemma}\label{lm:a2}
Let $q=\lfloor\log_2(Q)\rfloor$.
Then for any $l \in [0, d-q)$ and $\beta\in V\backslash \{0\}$, we have
\[
|\wh{\nu_2}^{(l,l+q)}(\beta)|\le\exp\Big(-\frac{1-\|\mu\|_2^2}{16}\Big).
\]
\end{lemma}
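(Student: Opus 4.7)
The plan is to apply Lemma \ref{lm:F-single} to each factor of the convolution
\[
\nu_2^{(l,l+q)}=\mu.\delta_{\a^{l+1}}*\cdots*\mu.\delta_{\a^{l+q}},
\]
use multiplicativity of the Fourier transform under convolution, and deduce
\[
|\wh{\nu_2^{(l,l+q)}}(\beta)|\le\exp\Big(-\sum_{n=l+1}^{l+q}\sum_{a_1,a_2\in\Z}\mu(a_1)\mu(a_2)\Big([\Psi\circ\tr((a_1-a_2)\beta\a^n)]^{\sim}\Big)^2/Q^2\Big).
\]
Since here $\a^n=(2^n,\ldots,2^n)$ and $\Psi\circ\tr$ is just the canonical isomorphism $\bigoplus_i\F_{p_i}\cong\Z/Q\Z$, one has $\Psi\circ\tr((a_1-a_2)\beta\a^n)=2^n(a_1-a_2)B\bmod Q$, where $B:=\Psi(\beta)\in\Z/Q\Z$ is non-zero because $\beta\neq 0$.

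Next I would use the support hypothesis $\supp\mu\subset(-p_i/2,p_i/2)$: if $a_1\neq a_2$ in $\Z$, then $a_1-a_2$ is non-zero modulo each $p_i$, hence non-zero in $\Z/Q\Z$ by CRT, so $c:=(a_1-a_2)B$ is a non-zero element of $\Z/Q\Z$. The task then reduces to the elementary claim: for any non-zero $c\in\Z/Q\Z$ at least one of $[2^{l+1}c]^\sim,\ldots,[2^{l+q}c]^\sim$ exceeds $Q/4$ in absolute value. Granting the claim, that single term contributes $(Q/4)^2=Q^2/16$ to the inner sum, and summing over the (now non-trivially weighted) off-diagonal pairs $a_1\neq a_2$ yields
\[
\sum_{n=l+1}^{l+q}\sum_{a_1,a_2}\mu(a_1)\mu(a_2)([\cdots]^\sim)^2/Q^2\ge \frac{1-\|\mu\|_2^2}{16},
\]
which is exactly the stated bound after exponentiation.

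The key observation, and the only real step of the proof, is the claim itself. I would argue by contradiction: suppose $|[2^{n}c]^\sim|\le Q/4$ for every $n\in\{l+1,\ldots,l+q\}$. Then, passing from $n$ to $n+1$, the value $2\cdot[2^{n}c]^\sim$ lies in $(-Q/2,Q/2]$, so no modular reduction occurs and $[2^{n+1}c]^\sim=2[2^{n}c]^\sim$. Iterating $q-1$ times gives $[2^{l+q}c]^\sim=2^{q-1}[2^{l+1}c]^\sim$, whence $|[2^{l+1}c]^\sim|\le Q/(4\cdot 2^{q-1})=Q/2^{q+1}<1$ because $2^q\le Q<2^{q+1}$. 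Hence $[2^{l+1}c]^\sim=0$, i.e.\ $2^{l+1}c\equiv 0\pmod Q$.

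The proof of the claim closes by invoking that $Q=p_1\cdots p_M$ with each $p_i\ge 5$, so $Q$ is odd and $2^{l+1}$ is invertible modulo $Q$, forcing $c\equiv 0$, a contradiction. The main conceptual ingredient is thus the ``doubling stays in the box implies trivial'' dichotomy together with the oddness of $Q$; the rest is bookkeeping with Lemma \ref{lm:F-single}.
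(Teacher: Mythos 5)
Your proof is correct and follows essentially the same route as the paper: factor the convolution, apply Lemma \ref{lm:F-single} to each step, and observe that the sequence $[2^{n}c]^{\sim}$ doubles exactly as long as it stays below $Q/4$ in absolute value, so some term in the range must reach $Q/4$. The only difference is that you spell out the step the paper dismisses as ``easy to see'' (the doubling argument plus the oddness of $Q$ to rule out $c\equiv 0$), which is a welcome but inessential elaboration.
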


\begin{proof}
By Lemma \ref{lm:F-single}, we have
\begin{align*}
|\wh{\nu_2}^{(l,l+q)}(\beta)|
=&\prod_{j=1}^{q}|\wh{\nu_2}^{(l+j-1,l+j)}(\beta)|\\
\le&\exp\Big(-\sum_{n=l+1}^{l+q}\sum_{a_1,a_2\in\Z}\mu(a_1)\mu(a_2)R(a_1,a_2,n)^2/Q^2\Big),
\end{align*}
where
\[
R(a_1,a_2,n)=\Big[\Psi((a_1-a_2)2^n\beta)\Big]^\sim.
\]

We note that $R(a_1,a_2,n+1)\equiv 2R(a_1,a_2,n)\mod Q$.
Therefore, if $|R(a_1,a_2,n)|<Q/4$, then $|R(a_1,a_2,n+1)|=2|R(a_1,a_2,n)|$.
Now, it is easy to see that for any $a_1\neq a_2$, there is $n\in[l+1,l+q]$ such that
$|R(a_1,a_2,n)|\ge Q/4$, and the claim follows.
\end{proof}

\begin{proof}[Proof of Proposition \ref{pr:a2}]
We note that
\[
\nu_2^{(d)}(x)-Q^{-1}=
\frac{1}{|Q|}\sum_{\beta\in V\backslash\{ 0\}}\wh\nu_2^{(d)}(\beta) \chi_\beta(x),
\]
hence it is enough to prove that for all $\beta\in V\backslash \{0\}$,
\[
|\wh\nu_2^{(d)}(\beta)|<Q^{-10}.
\]

To that end, we choose an integer $L\le d/q$ and write
\[
\nu_2^{(d)}=\nu_2^{(-1,0)}*\nu_2^{(0,q)}*\ldots *\nu_2^{((L-1)q,Lq)}*\nu_2^{(Lq,d)}
\]
and note that Lemma \ref{lm:a2} implies
\[
|\wh\nu_2^{(d)}(\beta)|<\exp(-\tau L/16).
\]
This yields the desired estimate, if we set $L=\lceil160\log(Q)/\tau\rceil$,
which is permitted if the constant $C$ is taken sufficiently large.
\end{proof}

\section{Expected number of roots of a random polynomial}\label{sc:exp-root-2}

In this section, we use the results of the previous section to calculate the expected number of
roots of a typical polynomial in $\F_p$ for a random prime.
In the proofs of our main result, we will compare these with the formulae in Section \ref{sc:roots-random-fields}.

Let $m, d\ge 1$, $\kappa \in (0,\frac{1}{100})$ and $X>10$. For a random polynomial $P\in \Z[x]$ of degree at most $d$, we will now estimate  the number $B_P(p)$ of admissible roots of $P$ in $\F_p$ on average over the prime $p$. Here and below a residue modulo $p$ will be called admissible if it is $(\frac{\kappa}{m},mX)$-admissible in the notation of Definition \ref{admissible-res}. For the irreducibility results it will be sufficient to set $m=1$, but for information on the Galois groups, we will need to consider larger values of $m$. Nevertheless $m$ will not exceed a fixed power of $\log d$.

We suppose that the coefficients of $P$, except for the leading coefficient and the constant term, are
identically distributed and write $\mu$ for their common law. The notation $\E_P$ is used to denote expectation with respect to the law of the random polynomial $P$. Our purpose in this section is the prove the following result.

\begin{proposition}\label{pr:exp-root-2}
There are absolute constants $c_0,C_0>0$ such that the following holds. Let $\tau,\kappa>0$, $d,m\in\Z_{>0}$ and let $\mu$ be a probability measure on $\Z$ supported on  $[-\exp(d^{1/10}),\exp(d^{1/10})]$. Assume $\|\mu\|_2^2< 1- \tau$. Let $X$ be a number such that
\begin{equation}\label{condiX}
100 m^2 \max\{\kappa^{-1},\tau^{-1}, d^{1/10}\}<X<\frac{\kappa \tau}{C_0} \frac{d}{(m\log(md))^3}
\end{equation}
and let $g:\R\to\R_{\ge 0}$ be a function such that $\supp g\subset[X/2,X]$,
$g(x)\le2\exp(-x)$ for all $x$.
Then setting 
\[
Z:=\sum_{p}B_P(p)^m\log (p)g(\log p) - B_mw,
\]
we have
\[
\E_P(Z^2)\le \frac{1}{4}\Big(\exp(-\frac{X}{6}) + \exp(-\frac{c_0\tau\kappa d}{mX(\log(md))^{2}})\Big)
\]
where $B_m$ stands for the $m$-th Bell number and $w=\sum_{p}\log(p) g(\log p)$.
\end{proposition}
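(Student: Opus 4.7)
The plan is to bound $\E_P[Z^2]$ by a second moment argument. Setting $T=\sum_p B_P(p)^m\log(p)g(\log p)$ so that $Z=T-B_mw$, I would compute
\[
\E_P[Z^2]=\E_P[T^2]-2B_mw\,\E_P[T]+B_m^2w^2.
\]
This reduces the problem to estimating $\E_P[B_P(p)^m]$ and $\E_P[B_P(p)^m B_P(q)^m]$; the heuristic targets are $\E_P[B_P(p)^m]\approx B_m$, $\E_P[B_P(p)^m B_P(q)^m]\approx B_m^2$ for $p\ne q$, and $\E_P[B_P(p)^{2m}]\approx B_{2m}$, the Bell numbers arising because each ordered $m$-tuple of admissible roots is classified by its equivalence pattern, i.e.\ by a set partition of $[m]$.

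To make this precise I would decompose
\[
B_P(p)^m=\sum_\pi N_\pi(p),\qquad N_\pi(p):=\#\{f\colon\mathrm{blocks}(\pi)\hookrightarrow\F_p^{\mathrm{adm}}\text{ with }P(f(B))=0\ \forall B\in\pi\},
\]
where $\pi$ ranges over the $B_m$ set partitions of $[m]$ and $\F_p^{\mathrm{adm}}$ denotes the set of $(\kappa/m,mX)$-admissible residues. For a partition $\pi$ with $k$ blocks, $\E_P[N_\pi(p)]$ equals $\sum_{\vec a}\nu_{\vec a}^{(d)}(0)$ summed over ordered $k$-tuples $\vec a$ of distinct admissible residues in $\F_p$, where $\nu^{(d)}_{\vec a}$ is the law of the random walk in $V=\F_p^k$ from Section \ref{sc:gen-nota}. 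Analogous decompositions treat $B_P(p)^m B_P(q)^m$ for $p\ne q$ (a sum of $B_m^2$ cross terms living in $V=\F_p^{k_1}\oplus\F_q^{k_2}$) and $B_P(p)^{2m}$ for $p=q$ (a sum of $B_{2m}$ terms).

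Next I apply Proposition \ref{pr:equi-dist}. One first checks that $(\kappa/m,mX)$-admissibility forces the tuple $\vec a$ into $\cA_{\kappa/m}\subset V$: for $V=\F_p^k$ (resp.\ $V=\F_p^{k_1}\oplus\F_q^{k_2}$), one has $3\log Q^D\le 3mX$ (resp.\ $\le 6mX$), which is below the admissibility threshold $10mX$. The hypothesis \eqref{condiX} on $X$ is calibrated precisely so that the lower bound on $d$ in Proposition \ref{pr:equi-dist} is satisfied uniformly for all $k,k_1,k_2\le m$. The proposition then gives
\[
\sum_{\vec a\in\cA_{\kappa/m}^{(p,k)}}\nu_{\vec a}^{(d)}(0)=\frac{|\cA_{\kappa/m}^{(p,k)}|}{p^k}+O\bigl(\mathrm{err}\bigr),
\]
with an exponentially small error of the shape appearing in the statement; the Dubickas--Konyagin bound in Section \ref{sc:primes-roots} controls $|\F_p\setminus\F_p^{\mathrm{adm}}|\le\exp(O(\kappa X))$, whence $|\cA_{\kappa/m}^{(p,k)}|/p^k=1+O(me^{-X/3})$. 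Summing over partitions then yields $\E_P[B_P(p)^m]=B_m+O(B_m\,\mathrm{err})$ and the analogous estimates for the two-prime and the $p=q$ expectations.

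Substituting these into $\E_P[Z^2]$, the $B_m^2w^2$ contributions cancel exactly, leaving
\[
\E_P[Z^2]=(B_{2m}-B_m^2)\sum_p(\log p\,g(\log p))^2+(\text{errors from Proposition \ref{pr:equi-dist}}).
\]
The first summand is estimated via $g(\log p)\le 2e^{-\log p}=2/p$ together with $\log p\ge X/2$, giving $\sum_p(\log p\,g(\log p))^2\le 8X^2e^{-X/2}$; combined with the crude bound $B_{2m}\le(2m)!\le\exp(Cm\log m)$ and the hypothesis $X\ge 100m^2d^{1/10}$, this is at most $\tfrac18\exp(-X/6)$. The error terms, weighted by $w\le CX$ and by $B_m^2$ from the partition sum, are absorbed into the second quantity in the claimed bound by a suitable choice of $c_0$. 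The principal technical obstacle is the combinatorial bookkeeping: the exponent in the error from Proposition \ref{pr:equi-dist} depends on $\log Q^D\le mX$, so one must take the worst partition size $k=m$ uniformly; the constraint \eqref{condiX} has been tuned precisely so that this worst-case exponent still delivers the decay rate $c_0\tau\kappa d/(mX(\log(md))^2)$ claimed in the proposition.
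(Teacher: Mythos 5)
Your proposal is correct and follows essentially the same route as the paper: the same variance expansion of $\E_P(Z^2)$ into off-diagonal, cross, and diagonal sums over primes, the same stratification of $B_P(p)^m$ by set partitions (equivalence relations) of $[m]$ feeding into Proposition \ref{pr:equi-dist} on $V=\F_p^{k}$ and $V=\F_p^{k_1}\oplus\F_q^{k_2}$, the same Dubickas--Konyagin count of non-admissible residues, and the same final bookkeeping with $w\le 4X^2$ and $w_2\le 8X^2e^{-X/2}$. The only (harmless) deviation is the diagonal term $p_1=p_2$: you estimate $\E_P[B_P(p)^{2m}]\approx B_{2m}$ by running the equidistribution machinery on $2m$-tuples, whereas the paper simply uses the crude bound $B_P(p)^{2m}\le d^{2m}\le e^{X/50}$ and absorbs it via $w_2$, which avoids re-checking admissibility and the hypothesis on $d$ for $2m$-tuples.
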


Recall that the Bell number $B_m$ is the number of equivalence classes on a set with $m$ elements.

\begin{corollary}\label{cor:exp-root-2} Under the assumption of the previous proposition, with probability at least
\[
1-\exp(-\frac{X}{6}) - \exp(-\frac{c_0\tau\kappa d}{mX(\log(md))^{2}})
\]
the following holds for $P$:
\[
|\sum_{p}B_P(p)^m\log (p)g(\log p) - B_mw| < \frac{1}{2}.
\]
\end{corollary}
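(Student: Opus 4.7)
The corollary is an immediate application of Chebyshev/Markov's inequality applied to the random variable $Z^2$, where $Z=\sum_{p}B_P(p)^m\log (p)g(\log p) - B_mw$ as defined in Proposition \ref{pr:exp-root-2}. The plan is simply to translate the second-moment bound into a tail bound.

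Concretely, I would argue as follows. By Markov's inequality applied to the non-negative random variable $Z^2$, we have
\[
\P_P\bigl(|Z|\ge 1/2\bigr)=\P_P\bigl(Z^2\ge 1/4\bigr)\le 4\,\E_P(Z^2).
\]
Inserting the second-moment estimate provided by Proposition \ref{pr:exp-root-2}, namely
\[
\E_P(Z^2)\le \tfrac{1}{4}\Bigl(\exp(-X/6)+\exp\bigl(-c_0\tau\kappa d/(mX(\log(md))^{2})\bigr)\Bigr),
\]
yields
\[
\P_P\bigl(|Z|\ge 1/2\bigr)\le \exp(-X/6)+\exp\bigl(-c_0\tau\kappa d/(mX(\log(md))^{2})\bigr).
\]
Taking the complementary event gives exactly the probability bound stated in the corollary, and on that event the inequality $|Z|<1/2$ is precisely the conclusion.

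There is no real obstacle: the work has already been done in establishing the second-moment bound of Proposition \ref{pr:exp-root-2}, and the hypotheses of the corollary are identical to those of that proposition, so the only step is this one-line Markov argument. The only minor thing to note, which requires no calculation, is that the factor $1/4$ in the second-moment bound has been arranged so that Markov's inequality with threshold $(1/2)^2=1/4$ produces a clean cancellation and yields the tail bound without any extra constants. No additional assumption on $P$ beyond what enters the proposition is needed.
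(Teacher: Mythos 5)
Your argument is exactly the paper's proof: the corollary is deduced from Proposition \ref{pr:exp-root-2} by Chebyshev's inequality in the form $\P_P(|Z|\ge \tfrac{1}{2})\le 4\,\E_P(Z^2)$, with the factor $\tfrac14$ in the second-moment bound cancelling as you note. Correct and identical in approach.
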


\begin{proof} This is immediate from the last proposition after applying Chebyshev's inequality  
\[
\P_P(|Z|\ge \frac{1}{2}) \le 4\E_P(Z^2).
\]
\end{proof}

 We now pass to the proof of Proposition \ref{pr:exp-root-2} and begin by recording the following consequence of Proposition \ref{pr:equi-dist}.

\begin{lemma}\label{lm:exp-B} There are absolute constants $c_0,C_0>0$ such that the following holds. 
Let $\tau,\kappa>0$ and $m,d\in\Z_{>0}$.
Suppose that the probability measure $\mu$ on $\Z$ is supported on $[-\exp(d^{1/10}),\exp(d^{1/10})]$ 
and that $\|\mu\|_2^2<1-\tau$.
Let $X$ be such that
\[
10 \max\{\kappa^{-1},\tau^{-1},d^{1/10}\}<X<\frac{\kappa \tau}{C_0} \frac{d}{(m \log(md))^3},
\]
and let $p,p_1\neq p_2\in[\exp(X/2),\exp(X)]$ be primes.
Then
\begin{align*}
|\E_P[B_P(p)^m]-B_m|\le& B_m \cdot \Err(X,d,m),\\
|\E_P[B_P(p_1)^mB_P(p_2)^m]-B_m^2|\leq& B_m^2 \cdot \Err(X,d,m),
\end{align*}
where $\Err(X,d,m)=40m^2 \exp(-\frac{X}{5}) + \exp(-\frac{c_0 \tau \kappa d}{mX(\log(md))^{2}})$.
\end{lemma}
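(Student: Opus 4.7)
The plan is to expand $B_P(p)^m$ as a sum over $m$-tuples of admissible residues of $\F_p$, group tuples by the partition of $\{1,\ldots,m\}$ induced by equality of coordinates, apply Proposition \ref{pr:equi-dist} to each partition, and sum over partitions, using that the number of partitions of $\{1,\ldots,m\}$ equals $B_m$. Concretely, writing $\mathcal{B}_p\subset\F_p$ for the set of $(\kappa/m, mX)$-admissible residues, I have
\[
\E_P[B_P(p)^m] = \sum_{\pi}\sum_{\substack{(v_1,\ldots,v_{k(\pi)})\in\mathcal{B}_p^{k(\pi)}\\\text{pairwise distinct}}}\P\bigl(P(v_1)=\cdots=P(v_{k(\pi)})=0\bigr),
\]
where $k(\pi)$ is the number of blocks of $\pi$, and the inner probability is exactly $\nu_\alpha^{(d)}(0)$ for the walk of Section \ref{sc:RW} on $V=\F_p^{k(\pi)}$ with parameter $\alpha=(v_1,\ldots,v_{k(\pi)})$.

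I then invoke Proposition \ref{pr:equi-dist} with $M=1$, $m_1=k(\pi)\le m$, $Q=p$, and its constant $\kappa$ replaced by $\kappa/m$. The hypothesis $\alpha\in\cA_{\kappa/m}$ is exactly our admissibility of the $v_j$, since the degree bound $3\log Q^D=3k\log p\le 3mX$ is dominated by the $10mX$ appearing in the definition of an exceptional polynomial; the lower bound on $d$, the lower bound on $\log Q^D$, and $\supp\mu\subset(-p/2,p/2)$ all follow from \eqref{condiX} once $C_0$ is large enough (using $\log p\ge X/2>d^{1/10}$ for the support condition). Combined with $\log Q^D\le mX$ and the bound $(\log\log Q^D)^2\ll(\log(md))^2$, the proposition yields for each $\pi$
\[
\Bigl|\sum_{\substack{(v_1,\ldots,v_k)\in\mathcal{B}_p^k\\\text{distinct}}}\nu_\alpha^{(d)}(0) - \frac{|\mathcal{B}_p|^{\underline k}}{p^k}\Bigr| \le \exp\Bigl(-\tfrac{c\tau\kappa d}{mX(\log(md))^{2}}\Bigr).
\]
Next, the Dubickas--Konyagin bound noted after Definition \ref{admissibility} gives $p-|\mathcal{B}_p|\le (10mX)^2\exp(10\kappa X)$, and since $\kappa<\tfrac{1}{100}$ and $p\ge\exp(X/2)$ this is at most $m^2\exp(-2X/5)\cdot p$ for $X$ large enough, so $|\mathcal{B}_p|^{\underline k}/p^k = 1+O(km^2\exp(-X/5))$. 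Summing over the $B_m$ partitions and using $\sum_\pi k(\pi)\le mB_m$ yields the first inequality of the lemma, with the polynomial correction absorbed into the $40m^2\exp(-X/5)$ term of $\Err$.

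The second inequality is obtained identically, summing now over pairs $(\pi_1,\pi_2)$ of partitions of $\{1,\ldots,m\}$ and invoking Proposition \ref{pr:equi-dist} with $M=2$, $Q=p_1p_2\le\exp(2X)$, $m_1=k(\pi_1)$, $m_2=k(\pi_2)$ and $V=\F_{p_1}^{k(\pi_1)}\oplus\F_{p_2}^{k(\pi_2)}$; the CRT isomorphism implicit in the definition of $\Psi$ is precisely what makes the joint vanishing probability of $P$ at $k(\pi_1)+k(\pi_2)$ distinct points of $\F_{p_1}\sqcup\F_{p_2}$ equal to $\nu_\alpha^{(d)}(0)$ for a single walk on $V$, and there are $B_m^2$ pairs of partitions producing the main term $B_m^2$. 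The main obstacle is the parameter bookkeeping when invoking Proposition \ref{pr:equi-dist}: one must turn the two-sided condition \eqref{condiX} into the hypotheses of that proposition after substituting $\kappa\mapsto\kappa/m$, and absorb the factors $\log Q^D\le 2mX$ and $(\log\log Q^D)^2$ into the cleaner $mX(\log(md))^2$ denominator of $\Err$ without losing more than polynomial factors in $m$, which are in turn swept into the $40m^2\exp(-X/5)$ term.
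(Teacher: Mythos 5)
Your proposal is correct and follows essentially the same route as the paper: expand $\E_P[B_P(p)^m]$ over $m$-tuples of admissible residues, stratify by the induced partition of the coordinates (giving the $B_m$ main terms), apply Proposition \ref{pr:equi-dist} with $M=1$ (resp. $M=2$ for the product of two primes) and admissibility parameter $\kappa/m$ on $V=\F_p^{k(\pi)}$ (resp. $\F_{p_1}^{k(\pi_1)}\oplus\F_{p_2}^{k(\pi_2)}$), and control the density of non-admissible residues via the Dubickas--Konyagin count. The only cosmetic difference is your per-partition bookkeeping of the falling-factorial correction, which, as you note, is absorbed into the $40m^2\exp(-X/5)$ term exactly as in the paper.
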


\begin{proof}
We write $A_p$ for the set of $(\frac{\kappa}{m},mX)$-admissible elements of $\F_p$.
In the notation of Section \ref{sc:RW} we take $M=1$ and $V=\F_p^m$.
Then
\[
\E[B_P(p)^m]=\sum_{\a\in (A_p)^m}\nu_\a^{(d)}(0).
\]

We decompose $(A_p)^m$ as a disjoint union of subsets $(A_p)^m(\eps)$ for which  Proposition \ref{pr:equi-dist} applies.
To this end, we write $\cE_m$ for the set of equivalence relations on the set $\{1,\ldots, m\}$.
For each $\e\in\cE_m$, we let $V(\e)$ be the subgroup of $V$ formed by the equations
$\a_i=\a_j$ whenever $(i,j)\in\e$, and write $(A_p)^m(\eps)$ for the subset of $(A_p)^m \cap V(\e)$ made of those $m$-tuples  $\a$ such that $\a_i=\a_j$ if and only if $(i,j)\in\e$.

Given $\e\in\cE_m$ we may apply Proposition \ref{pr:equi-dist} to the group $V(\e)\simeq \F_p^{m_\e}$, where $m_\e$ is the
number of equivalence classes in $\e$ and obtain
\begin{equation}\label{eq:1orbit}
\bigg|\sum_{\a\in (A_p)^m(\e)}\nu_\a^{(d)}(0)-\frac{|(A_p)^m(\e)|}{p^{m_\e}}\bigg|<\exp(-c_0 \frac{\tau \kappa}{mX}\frac{d}{(\log(md))^{2}}).
\end{equation}

As we have already noted, the number of polynomials of degree at most $10mX$ and Mahler measure at most $\exp(\kappa/m)$ is at most $\exp(X/10)$ by \cite{DK-Lehmer}*{Theorem 1}.
Therefore, $|\F_p \setminus A_p|\leq 10mX \exp(X/10)$ and 
\[
0\leq 1 - \frac{|(A_p)^m(\e)|}{p^{m_\e}}\leq 10m^2 X \exp(X/10)/p \leq 20m^2  \exp(-X/5).
\]
Now summing up \eqref{eq:1orbit} for $\e\in\cE_m$, we arrive at the first claim.

The proof of the second claim is entirely similar using Proposition \ref{pr:equi-dist} for the random
walk on $V=\F_{p_1}^m\oplus\F_{p_2}^m$.
We leave the details to the reader.
\end{proof}

\begin{lemma}\label{lm:diag}
Let $X>10$ and let $g:\R\to\R_{\ge 0}$ be a function such that $\supp g\subset[X/2,X]$,
$g(x)\le2\exp(-x)$ for all $x$. Then
\[
w_2:=\sum_{p}(\log (p)g(\log p))^2\le 8 X^2\exp(-X/2)\]
\[
w:=\sum_{p}\log (p)g(\log p) \le 4X^2.
\]
\end{lemma}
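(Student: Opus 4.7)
The plan is to prove both bounds by straightforward pointwise estimates combined with elementary sums, with no need for prime-specific asymptotics.

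For the second-moment bound, I will use the hypothesis $g(\log p) \le 2\exp(-\log p) = 2/p$ to write each summand as
\[
(\log p \cdot g(\log p))^2 \le 4(\log p)^2/p^2 \le 4X^2/p^2,
\]
where I used that $g$ is supported in $[X/2,X]$, so only primes with $\log p \le X$ contribute. Summing over primes $p \ge \exp(X/2)$ and bounding the tail by a tail of $\sum 1/n^2$, namely $\sum_{p \ge \exp(X/2)} 1/p^2 \le \sum_{n \ge \exp(X/2)} 1/n^2 \le 2\exp(-X/2)$ (valid for $X \ge 2\log 2$), yields $w_2 \le 8X^2\exp(-X/2)$.

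For the first-moment bound, I apply the same pointwise estimate to get $\log p \cdot g(\log p) \le 2\log p/p \le 2X/p$, so
\[
w \le 2X\sum_{p \in [\exp(X/2),\exp(X)]} \frac{1}{p} \le 2X \sum_{n \in [\exp(X/2),\exp(X)] \cap \Z} \frac{1}{n}.
\]
The integer sum on the right is at most $1 + \int_{\exp(X/2)}^{\exp(X)} dt/t = 1 + X/2$, so $w \le 2X(1+X/2) = 2X+X^2 \le 4X^2$ for $X > 10$.

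Both arguments are essentially one-line estimates, so there is no real obstacle: the only thing to be mildly careful about is choosing bounds for $\sum 1/p^2$ and $\sum 1/p$ that are cleanly explicit rather than asymptotic, which is why I prefer bounding the prime sums by integer sums (and then by integrals) rather than invoking Mertens' theorem or prime number theorem constants.
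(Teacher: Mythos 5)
Your proof is correct and follows essentially the same route as the paper: both use the pointwise bound $g(\log p)\le 2/p$ together with the support restriction to compare the prime sums with integer sums over $[\exp(X/2),\exp(X)]$, which are then bounded elementarily. The only cosmetic difference is that you factor out $(\log p)^2\le X^2$ before summing, whereas the paper keeps $(\log n)^2/n^2$ inside the sum; the resulting estimates are identical.
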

\begin{proof}
A simple calculation yields
\[
\sum_{p}(\log (p)g(\log p))^2
\le\sum_{\exp(X/2)\le n \le \exp(X)}4\frac{(\log n)^2}{n^2}
\le 8 X^2\exp(-X/2),\]
\[\sum_{p} \log (p)g(\log p)
\le\sum_{\exp(X/2)\le n \le \exp(X)}2\frac{\log n }{n}
\le 4 X^2.
\]
\end{proof}

\begin{proof}[Proof of Proposition \ref{pr:exp-root-2}]
Recall that
\[
Z=\sum_{p}B_P(p)^m\log (p)g(\log p) - B_mw.
\]
Setting $h(x)=\log(x) g(\log x)$ we compute:
\[
Z^2=\sum_{p_1, p_2} (B_P(p_1)^m-B_m)(B_P(p_2)^m-B_m)h(p_1)h(p_2)
\]
so
\begin{align*}
\E_P(Z^2)
=&\sum_{p_1,p_2}\E_P((B_P(p_1)^m-B_m)(B_P(p_2)^m-B_m))h(p_1)h(p_2)\\
=&\sum_{p_1,p_2}\E_P(B_P(p_1)^mB_P(p_2)^m)h(p_1)h(p_2)\\
&-2B_m\sum_{p_1}h_{p_1}\sum_{p_2}\E_P(B_P(p_2)^m)h(p_2)
+B_m^2\Big(\sum_p h(p)\Big)^2\\
=&\sum_{p_1,p_2}\E_P(B_P(p_1)^mB_P(p_2)^m-B_m^2)h(p_1)h(p_2)\\
&-2B_m\sum_{p_1}h(p_1)\sum_{p_2}\E_P(B_P(p_2)^m-B_m)h(p_2)\\
=&\sum_{p_1 \neq p_2}\E_P(B_P(p_1)^mB_P(p_2)^m - B_m^2) h(p_1)h(p_2)\\
&-2B_mw\sum_p \E_P(B_P(p)^m - B_m)h(p) +\sum_p \E_P(B_P(p)^{2m}-B_m^2)h(p)^2. 
\end{align*}
We use Lemma \ref{lm:exp-B} to bound the first two terms and the crude bound $B_P(p)\le d$ for the  third:
\[
\E_P(Z^2)\le 3B_m^2\Err(X,d,m)w^2+d^{2m}w_2.
\]
We recall that $100m^2d^{1/10}<X$, so that $d^{2m}\leq e^{X/50}$. 
We plug in the bounds for $w$ and $w_2$
from Lemma \ref{lm:diag} and the definition $\Err(X,d,m)$ from Lemma \ref{lm:exp-B} 
and obtain
\begin{align*}
\E_P(Z^2)\le&48 B_m^2X^4\Big(40m^2\exp(-\frac{X}{5})+\exp(-\frac{c_0\tau\kappa d}{mX(\log(md))^2})\Big)\\
&+8X^2\exp(-\frac{X}{2}+\frac{X}{50}).
\end{align*} 
The constraints on $X$ in the statement of the proposition imply that
\[
\frac{c_0\tau\kappa d}{mX(\log(md))^2}>C m\log(md),
\]
where $C$ is an arbitrarily large number provided we set $C_0$ sufficiently large (depending on $c_0$).
This means that we can absorb the factor $48B_m^2 X^2$ into the constant $c_0$.
Similarly, the lower bound on $X$ implies that we can also absorb the factor $48 B_m^2X^4 \cdot 40m^2$
at the expense of replacing $\exp(-X/5)$ by $\exp(-X/6)$.
\end{proof}

\section{Polynomials of small Mahler measure}\label{sc:Lehmer}

In this section, we estimate the probability that the random polynomial $P$ is divisible by
a non-cyclotomic polynomial of small Mahler measure.
The following result and the ideas in its proof are inspired by Konyagin's paper \cite{Kon-01}.

\begin{proposition}\label{pr:small-Mahler}
Let $P=A_dx^d+\ldots+A_1x+A_0\in\Z[x]$ be a random polynomial with independent
coefficients, and write $\mu_j$ for the law of $A_j$.
Let $\tau>0$ be a number.
We assume
\[
\supp\mu_j\subset[-\exp(d^{1/10}),\exp(d^{1/10})]
\]
for all $j$ and $\|\mu_j\|_2^2\le1-\tau$ for all $j\neq0,d$.

Then the probability that there is a non-cyclotomic polynomial $Q$ with $\log M(Q)<\tau/10$
dividing $P$  is at most $2\exp(-c \tau d^{4/5})$, where $c>0$ is an absolute constant.
\end{proposition}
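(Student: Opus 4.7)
The plan is a union bound over irreducible factors. It suffices to bound the probability that some non-cyclotomic irreducible $Q\in\Z[x]$ with $\log M(Q)<\tau/10$ divides $P$, so
\[
\P(\exists\, Q\mid P)\;\le\;\sum_{n=n_0}^{d} N(n)\,\max_Q\P(Q\mid P),
\]
where $N(n)$ counts such $Q$ of degree exactly $n$. The Dubickas--Konyagin estimate \cite{DK-Lehmer}*{Theorem 1} (already used in Section~\ref{sc:primes-roots}) gives $N(n)\le\exp(c_1\tau n)$ with $c_1$ a small absolute constant, once $n$ exceeds an absolute constant; and Dobrowolski's bound \eqref{measure-bounds} forces $n\ge n_0(\tau)\to\infty$ as $\tau\to 0$.

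For a fixed $Q$ of degree $n$, the condition $Q\mid P$ is equivalent to the vanishing of $P\bmod Q$ in $\Z[x]/(Q)\cong\Z^n$. Writing $v_j:=x^j\bmod Q\in\Z^n$ in the standard basis $1,x,\ldots,x^{n-1}$, this reads $\sum_{j=0}^d A_jv_j=0$, and Fourier inversion on $\T^n$ yields
\[
\P(Q\mid P)\;=\;\int_{\T^n}\prod_{j=0}^d \wh{\mu_j}(\xi\cdot v_j)\,d\xi.
\]
The per-step Fourier bound, standard from $\|\mu_j\|_2^2\le 1-\tau$ and $\supp(\mu_j)\subset[-H,H]$ with $H:=\exp(d^{1/10})$, is $|\wh{\mu_j}(\theta)|^2\le 1-c\tau\min(1,\|H\theta/(2\pi)\|^2_{\R/\Z})$.

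The crux, following Konyagin's strategy in \cite{Kon-01}, is a geometric lower bound: for every $\xi\in\T^n\setminus\{0\}$,
\[
\sum_{j=0}^d \min\bigl(1,H^2\|\xi\cdot v_j\|^2_{\R/\Z}\bigr)\;\gtrsim\;\frac{d}{(\log H)^2}\;=\;d^{4/5}.
\]
This exploits the recursion $v_{j+1}=Tv_j$ for the companion matrix $T$ of $Q$: its eigenvalues lie near the unit circle (since $\log M(Q)<\tau/10$) but are not roots of unity (since $Q$ is non-cyclotomic), so the orbit $T^j v_0$ cannot concentrate in a proper subtorus of $\T^n$. The factor $(\log H)^{-2}$, which produces the exponent $4/5$ since $\log H=d^{1/10}$, reflects the resolution of the Fourier estimate at scale $1/H$. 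Inserting this into the Fourier integral gives $\P(Q\mid P)\le\exp(-c\tau d^{4/5})$. For $n$ close to $d$ one may supplement with the direct counting bound coming from $P=QR$: each coefficient of $P$ is then determined by $R$ and lies in its support with probability $\le\|\mu_j\|_\infty\le\sqrt{1-\tau}$, so $\P(Q\mid P)\le\exp(-c\tau n)$, which easily absorbs $N(n)$ when $n$ is large.

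Combining, the total probability is at most $\sum_n\exp(c_1\tau n-c\tau\max(n,d^{4/5}))$, which splits at $n\asymp d^{4/5}$ into two geometric series each bounded by $\exp(-c'\tau d^{4/5})$ once $c$ is chosen sufficiently larger than $c_1$, giving the claimed $2\exp(-c\tau d^{4/5})$. The hard part is the geometric lower bound on $\sum_j \min(1,H^2\|\xi\cdot v_j\|^2_{\R/\Z})$: it is the technical heart of Konyagin's method and requires a pigeonhole-style argument exploiting both the absence of roots of unity among the eigenvalues of $T$ and the slow growth $\|v_j\|\le M(Q)^j\cdot\mathrm{poly}(n)$ guaranteed by the smallness of $M(Q)$.
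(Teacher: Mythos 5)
Your outer structure (union bound over irreducible $Q$, the Dubickas--Konyagin count $\exp(c_1\tau n)$ of candidates of degree $n$ and small Mahler measure, and the $\prod_j\|\mu_j\|_\infty\le\exp(-c\tau n)$ bound to absorb the count when $n$ is large) matches the paper's proof. But the crux of the proposition is the bound on $\P(Q\mid P)$ for a \emph{single} non-cyclotomic $Q$ of moderate degree, and there you have a genuine gap: you do not prove the ``geometric lower bound'' $\sum_{j}\min(1,H^2\|\xi\cdot v_j\|^2)\gtrsim d/(\log H)^2$, you only assert that it follows from a ``pigeonhole-style argument''. Worse, as stated the claim is false: it is asserted for every $\xi\in\T^n\setminus\{0\}$, but each summand tends to $0$ as $\xi\to 0$, so for $\xi$ in a small enough neighbourhood of the origin the left-hand side is arbitrarily small. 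Any Fourier route must therefore split off a major arc near $\xi=0$ and control its volume separately (this is essentially a local-limit-theorem contribution), which is not addressed. The per-step estimate $|\wh{\mu_j}(\theta)|^2\le 1-c\tau\min(1,\|H\theta/(2\pi)\|^2)$ is also not correct for a general $\mu_j$ supported in $[-H,H]$ (take $\mu_j$ uniform on $\{0,2\}$ and $\theta=1/2$); the honest bound involves $\sum_{a\neq b}\mu_j(a)\mu_j(b)\|(a-b)\theta\|^2$, and extracting the gain from it is precisely the nontrivial work.

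The paper avoids Fourier analysis entirely for this step (Lemma \ref{lm:indi2}). It chooses a prime $q\asymp \log(2H\sqrt{d+1})/\log M(Q)$ such that no ratio of two roots of $Q$ is a $q$-th root of unity (Lemma \ref{lm:q}). If two admissible polynomials $P_1,P_2$ agree except in coefficients of monomials $x^{qj}$ and $Q\mid(P_1-P_2)$, then all the numbers $z\omega$ ($z$ a root of $Q$, $\omega^q=1$) are distinct roots of $P_1-P_2$, forcing $M(P_1-P_2)\ge M(Q)^q>2H\sqrt{d+1}$, which contradicts \eqref{measure-bounds}. Hence, conditioning on the coefficients at indices not divisible by $q$, at most one assignment of the remaining $\approx d/q$ coefficients yields $Q\mid P$, giving $\P(Q\mid P)\le\|\mu\|_\infty^{\lfloor d/q\rfloor-1}\le\exp(-c\tau d^{9/10}(\log d)^{-3})$, which comfortably beats the required $\exp(-\tau d^{4/5})$. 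You would need either to reproduce an argument of this rigidity type or to actually carry out a corrected minor-arc analysis; as written, the technical heart of the proposition is missing.
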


The exponent $4/5$ is not optimal and there is a trade-off between it and the bound imposed on the
coefficients of $P$.
Since any improvement of this bound would have no effect on our theorems, we leave it to the interested
reader to find the optimal bound that can be derived from the proof.

We give two simple Lemmata that estimate the probability that a fixed single polynomial $Q$
divides a random polynomial.
Both of them are implicitly contained in \cite{Kon-01}.
The first one is useful when $\deg{Q}$ is large.

\begin{lemma}\label{lm:indi1}
Let $P=A_dx^d+\ldots+A_1x+A_0\in\Z[x]$ be a random polynomial with independent
coefficients, and write $\mu_j$ for the law of $A_j$.
Let $Q\in\Z[x]$ be a polynomial of degree $n\le d$.

Then
\[
\P_P(Q|P)\le\|\mu_0\|_\infty\cdots\|\mu_{n-1}\|_\infty.
\]
\end{lemma}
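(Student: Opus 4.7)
The plan is to reveal the coefficients of $P$ in a carefully chosen order and to use independence. Specifically, I would first expose the top $d-n+1$ coefficients $A_n, A_{n+1}, \ldots, A_d$, and then show that the event $\{Q \mid P\}$ forces the remaining coefficients $A_0, \ldots, A_{n-1}$ to take particular deterministic values (that depend on $A_n, \ldots, A_d$ and on $Q$).

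To see this, I would split $P = P_{\mathrm{high}} + P_{\mathrm{low}}$ with $P_{\mathrm{high}} = \sum_{j \ge n} A_j x^j$ and $P_{\mathrm{low}} = \sum_{j < n} A_j x^j$, and divide $P_{\mathrm{high}}$ by $Q$ in $\Q[x]$ to obtain $P_{\mathrm{high}} = QT + R$ with $\deg R < n$. Here $T, R \in \Q[x]$ depend only on $Q$ and on $A_n, \ldots, A_d$. Since $\deg(P_{\mathrm{low}} + R) < \deg Q$, the divisibility condition $Q \mid P$ (whether viewed in $\Z[x]$ or $\Q[x]$) forces $P_{\mathrm{low}} + R = 0$. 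Writing $R = \sum_{j=0}^{n-1} r_j x^j$, this equates to $A_j = -r_j$ for every $j = 0, \ldots, n-1$; if some $r_j$ fails to be an integer the event is outright impossible, which only strengthens the bound.

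Conditioning on $A_n, \ldots, A_d$ and invoking independence of the $A_j$'s then gives
\[
\P_P\bigl(Q \mid P \,\big\vert\, A_n, \ldots, A_d\bigr) \le \prod_{j=0}^{n-1} \P(A_j = -r_j) \le \|\mu_0\|_\infty \cdots \|\mu_{n-1}\|_\infty
\]
uniformly in the top coefficients, and averaging over $A_n, \ldots, A_d$ yields the claim. I do not expect any real obstacle: the structure is triangular, with the top $d-n+1$ of the $d+1$ coefficient equations encoded by $P = QT$ determining the quotient $T$, and the remaining $n$ equations pinning down $A_0, \ldots, A_{n-1}$. The only mild subtlety is that $Q$ need not be monic, which is handled by performing the division over $\Q$; we do not need exact integer divisibility of $T$ to control the probability, only the linear constraint imposed by the vanishing of the remainder.
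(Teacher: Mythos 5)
Your proposal is correct and follows essentially the same route as the paper: condition on the top coefficients $A_n,\ldots,A_d$, observe that the remainder of $\sum_{j\ge n}A_jx^j$ modulo $Q$ in $\Q[x]$ forces the values of $A_0,\ldots,A_{n-1}$ on the event $\{Q\mid P\}$, and bound the conditional probability by $\prod_{j<n}\|\mu_j\|_\infty$. The only difference is that you spell out the division-with-remainder argument in more detail than the paper does; the substance is identical.
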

\begin{proof}
Write $R$ for the remainder of $A_dx^d+\ldots+A_nx^n$ modulo $Q$ in $\Q[x]$.
If $Q|P$, then $R=-A_{n-1}x^{n-1}-\ldots-A_0$.
Therefore, the probability of $Q|P$ conditioned on the value of $A_dx^d+\ldots+A_nx^n$ is bounded by the
maximal probability of $A_0,\ldots,A_{n-1}$ taking any given value, which is precisely
the claimed bound.
\end{proof}

\begin{lemma}\label{lm:indi2}
Let $P=A_dx^d+\ldots+A_1x+A_0\in\Z[x]$ be a random polynomial with independent
coefficients, and write $\mu_j$ for the law of $a_j$.
Let $H\in\Z_{>0}$ and $\tau>0$ be numbers.
We assume $\supp\mu_j\subset[-H,H]$ for all $j$ and $\|\mu_j\|_2^2\le1-\tau$ for all $j\neq0,d$.
Let $Q\in\Z[x]$ be a non-cyclotomic irreducible polynomial.

Then for $d$ larger than some absolute constant:
\[
\P(Q|P)\le \exp(-c\tau d(\log H+\log d)^{-1}(\log d)^{-3}),
\]
where $c>0$ is some absolute constant.
\end{lemma}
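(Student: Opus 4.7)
We bound $\P(Q|P)$ by the probability that $P$ vanishes at a root of $Q$ modulo a well-chosen prime and then analyse this vanishing event as a random walk on $\F_p$ using the Fourier machinery of Section~\ref{sc:RW}. Write $n=\deg Q$; the argument splits on the size of $n$.

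\emph{Case 1: $n$ moderately large.} If $n \gtrsim \tau d/((\log H+\log d)(\log d)^3)$, Lemma~\ref{lm:indi1} already suffices. From $\|\mu_j\|_\infty\le \|\mu_j\|_2 \le \sqrt{1-\tau}\le e^{-\tau/2}$ for $j\neq 0,d$ we obtain
\[
\P(Q|P)\le \prod_{j=0}^{n-1}\|\mu_j\|_\infty \le e^{-\tau(n-1)/2},
\]
which is stronger than the target bound.

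\emph{Case 2: $n$ small.} Pick a dyadic range $[y,2y]$ of primes with $\log y$ of order $\log H+\log d$. By the Chebotarev (Frobenius) density theorem applied to the splitting field of the irreducible $Q$, a positive proportion of primes $p\in[y,2y]$ admit a root $\beta\in\F_p$ of $Q$. Fix such a pair $(p,\beta)$; then $Q\mid P$ in $\Z[x]$ forces $P(\beta)\equiv 0\pmod p$, hence
\[
\P(Q|P)\le \nu_\beta^{(d)}(0),
\]
where $\nu_\beta^{(d)}$ denotes the law on $\F_p$ of $\sum_{j=0}^d A_j\beta^j$. We then apply Fourier inversion, split the walk into blocks of length $L$ of order $(\log p)(\log\log p)/\tau$, and use Lemma~\ref{lm:F-single} together with Proposition~\ref{pr:Konyagin} on each block. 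Multiplying the per-block Fourier bounds yields a decay of the form $\nu_\beta^{(d)}(0)\le 1/p+\exp(-c\tau d/((\log p)(\log\log p)^{2}))$, which combined with the lower bound $p\gtrsim y$ gives the announced bound once $\log y$ is chosen proportional to $\log H+\log d$.

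\emph{Main obstacle.} The principal subtlety is that $\beta$ is, by construction, a root of $Q$, whose degree $n$ and Mahler measure $\le e^{\tau/10}$ may both be small; this is in tension with the admissibility hypothesis of Proposition~\ref{pr:Fourier} as stated, since $Q$ itself would show up as a ``small obstruction polynomial''. We sidestep this in two regimes: when $n>3\log p$ we choose $p$ so that $Q$ exceeds the degree threshold $3\log p$ used in Proposition~\ref{pr:Konyagin}, removing $Q$ from the list of forbidden polynomials; when $n\le 3\log p$ we tune the block length $L$ so that $(\log p)^{30\log p/L}<M(Q)$, placing $Q$ above the Mahler measure threshold. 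In either sub-regime, a counting argument based on the Dubickas--Konyagin bound on the number of polynomials of small Mahler measure shows that for a positive proportion of $p$ in the dyadic range $\beta$ is a root of no \emph{other} polynomial with degree $\le 3\log p$ and Mahler measure $\le (\log p)^{30\log p/L}$, so Proposition~\ref{pr:Konyagin} legitimately applies and produces the required Fourier decay.
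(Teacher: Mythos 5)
The proposal fails on a quantitative point in Case~2 that cannot be repaired within the single-prime framework you set up. When you pass from $Q\mid P$ to $P(\beta)\equiv 0\pmod p$ and then use Fourier inversion, you get the estimate
\[
\nu_\beta^{(d)}(0)\;=\;\frac{1}{p}\sum_{\xi\in\F_p}\widehat{\nu_\beta^{(d)}}(\xi)\;\le\;\frac{1}{p}+\max_{\xi\neq 0}\bigl|\widehat{\nu_\beta^{(d)}}(\xi)\bigr|,
\]
and the $1/p$ term is an irreducible floor. With $\log y$ of order $\log H+\log d$, this floor is of order $(Hd)^{-1}$, while the target bound $\exp\bigl(-c\tau d(\log H+\log d)^{-1}(\log d)^{-3}\bigr)$ is, for large $d$ (with $H$ fixed, say), roughly $\exp(-c\tau d/(\log d)^4)$ and hence exponentially smaller than $1/p$. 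To force $1/p$ below the target you would need $\log p\gtrsim \tau d/((\log H+\log d)(\log d)^3)$, but Proposition~\ref{pr:Fourier} requires the block length $L$ to satisfy $L\ge 200\log p\log\log p$, and the resulting per-walk decay $\exp\bigl(-c\tau d/(L\log L)\bigr)$ is then only of order $\exp\bigl(-c(\log H+\log d)(\log d)^3/\text{(log-log factors)}\bigr)$, nowhere near the target. The two requirements on $p$ are incompatible: either the $1/p$ term or the Fourier error must dominate the claimed bound. This also means the estimate would be far too weak for its actual use in Proposition~\ref{pr:small-Mahler}, where one needs probability $\lesssim\exp(-\tau d^{4/5})$. (The additional machinery you propose — the admissibility side-step, the tuning of $L$ via $M(Q)$, the counting of coincidences with other small polynomials via resultants — would matter only if the basic Fourier scheme could reach the target exponent; since it cannot, these points are moot.)

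For context, the paper's own proof is entirely different and sidesteps the $1/p$ obstruction. It uses Lemma~\ref{lm:q} to choose a prime $q\approx (\log H+\log d)(\log d)^3/(\log\log d)^3$ such that no ratio of two roots of $Q$ is a $q$-th root of unity, and then observes: if two integer polynomials $P_1,P_2$ with coefficients in $[-H,H]$ differ only in the coefficients of monomials $x^{qj}$ and $Q\mid(P_1-P_2)$, then the roots $z\omega$ (with $z$ a root of $Q$, $\omega$ a $q$-th root of unity) are all distinct roots of $P_1-P_2$, forcing $M(P_1-P_2)\ge M(Q)^q>2H(d+1)^{1/2}$, which is impossible. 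Conditioning on the coefficients $A_j$ with $q\nmid j$ then shows at most one assignment of the remaining $\lfloor d/q\rfloor$ coefficients can make $Q\mid P$, giving $\P(Q\mid P)\le\|\mu\|_\infty^{\lfloor d/q\rfloor-1}$ directly, with no reference to random walks, Chebotarev, or finite fields. Your Case~1 (Lemma~\ref{lm:indi1} for $n$ large) is fine, but Case~2 needs the genuinely different idea of the paper rather than a stronger version of the random-walk estimates.
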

\begin{proof}
Let
\[
s=\frac{\log(2H(d+1)^{1/2})}{c(\log\log d)^3/(\log d)^3},
\]
where $c$ is a sufficiently small constant so that
\[
\log(M(Q))>c(\log\log d)^3/(\log d)^3.
\]
The existence of such a constant follows by Dobrowolski's bound \cite{Dob-Lehmer}.

By Lemma \ref{lm:q}, there is a prime $q \in (s,2s]$ such that the ratio of any two roots
of $Q$ is not a root of unity of order $q$.
Let $P_1,P_2\in\Z[x]$ be two polynomials with coefficients of absolute value at most $H$ that differ only in some of the coefficients of monomials
of the form $x^{qj}$ for $j\in\Z_{\ge 0}$.
If $Q|(P_1-P_2)$, then each number $z\omega$ is a root for $P_1-P_2$, where $z$ is a root of
$Q$ and $\omega$ is a $q$-th root of unity. And by our choice of $q$ all $z\omega$ are distinct as $z$ ranges over the roots of $Q$ and $\omega$ over the $q$-th roots of unity.
This implies that
\[
M(P_1-P_2)\ge M(Q)^q> 2H(d+1)^{1/2},
\]
which is impossible by \eqref{measure-bounds}. This means that for any given choice of integers $b_j \in [-H,H]$ for those $j\leq d$ that are not a multiple of $q$, in each class of $\Z[x]$ modulo $Q$ there is at most one polynomial $P=a_0+\ldots+a_dx^d$ with $a_j=b_j$ for all such $j$.

Hence conditioning on the value of $a_j$ for all indices $j$ that are not multiples of $q$, the probability of $Q|P$ is bounded by the probability that the rest of the coefficients take any particular given value.
Therefore
\[
\P(Q|P)\le\|\mu\|_\infty^{\lfloor d/q\rfloor-1}.
\]
\end{proof}

\begin{proof}[Proof of Proposition \ref{pr:small-Mahler}]
We fix a small number $\e>0$.
Let $j\ge 0$ be an integer and write $\cQ_j$ for the set of non-cyclotomic irreducible polynomials $Q$ with 
$\deg Q=j$ and  $\log M(Q)<\tau/10$. 
By the estimate of Dubickas and Konyagin \cite{DK-Lehmer}*{Theorem 1}, we have
$|\cQ_j|\le\exp(\tau j/10)$ if $j$ is sufficiently large.

Using Lemma \ref{lm:indi1}, we then have
\[
\P(\exists Q \in\cQ_j:Q|P)\le\exp(\tau j/10)\cdot\exp(-\tau j/2)\le\exp(-\tau j/10)
\]
for each $j$.
By Lemma \ref{lm:indi2} applied with $H=\exp(d^{1/10})$
\[
\P(\exists Q \in\bigcup_{j<d^{4/5}}\cQ_j:Q|P)\le\exp(\tau d^{4/5}/10)\cdot \exp(-\tau d^{4/5})\le\exp(-\tau d^{4/5}/10).
\]
provided $d$ is sufficiently large depending on an absolute constant.

Summing up the above bounds we get
\[
\P(\exists Q \in\bigcup\cQ_j:Q|P)\le\exp(-\tau d^{4/5}/10)+\sum_{j\ge d^{4/5}}\exp(-\tau j/10),
\]
which proves the claim.
\end{proof}

\section{Proper powers}\label{sc:powers}

In this section, we estimate the probability that a random polynomial $P$ is of the form
$\Phi Q^k$ with $k>1$, where $\Phi$ is the product of cyclotomic factors.

\begin{proposition}\label{pr:powers}
Let $P=A_dx^d+\ldots+A_1x+A_0\in\Z[x]$ be a random polynomial with independent coefficients.
Assume that $A_1,\ldots, A_{d-1}$ are identically distributed with common law $\mu$.
Assume further that all coefficients are bounded by $\exp(d^{1/10})$ almost surely.
Let $\tau>0$ be a number such that $\|\mu\|_2^2<1-\tau$.

Then there are absolute constants  $c,C>0$ such that the probability that
$P=\Phi Q^k$, where $\Phi$ is a product of cyclotomic polynomials, $Q\in\Z[x]$ and $k\ge 2$,
is less than $2\exp(-c (\tau d)^{1/2})$, provided $d$ is larger than $C/\tau^4$.
\end{proposition}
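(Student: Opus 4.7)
The approach, pointed to in Section~\ref{sc:outline}, is to reduce to a property of the integer $P(2)$ and combine the near-uniformity of $P(2)\bmod M$ provided by Proposition~\ref{pr:a2} with a large-sieve / union-bound argument.  If $P=\Phi Q^k$ then $P(2)=\Phi(2)\,Q(2)^k$, so the event forces $P(2)$ to lie in the set
\[
T = \bigcup_{k\ge 2,\,\Phi}\bigl\{\Phi(2)\cdot m^k : m\in\Z\bigr\},
\]
with $\Phi$ ranging over cyclotomic products of degree at most $d$.

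I would first reduce to $k$ prime (since $Q^{k_1 k_2}=(Q^{k_2})^{k_1}$) and then, via Dobrowolski's bound~\eqref{measure-bounds} applied to the non-cyclotomic part of $Q$ combined with $M(Q)^k\le M(P)\le \exp(d^{1/10})(d+1)^{1/2}$, restrict to $k\le Cd^{1/10}(\log d)^3$, leaving only polynomially many $k$ to handle.  The degenerate case of constant $Q$ (so that $P=\Phi\cdot n^k$ for some $n\in\Z$) would be disposed of separately by using that $\|\mu\|_\infty\le\|\mu\|_2\le\sqrt{1-\tau}$ to control the probability that the full coefficient vector equals $n^k$ times that of a cyclotomic product.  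For each remaining prime $k$, I would apply Proposition~\ref{pr:a2} with $M=p_1\cdots p_R$ built from primes $p_i\in[y,2y]$ satisfying $p_i\equiv 1\pmod k$ (plentiful by Dirichlet) and $\log M$ of order $\sqrt{\tau d}/C$, so that
\[
\P(P(2)\in T_k)\;\le\;\P(P(2)\bmod M\in T_k\bmod M)\;\le\;|T_k\bmod M|/M + M^{-9}.
\]
Since the $k$-th powers form a subgroup of index $k$ in each $\F_{p_i}^\times$, they have density $k^{-R}$ in $(\Z/M\Z)^\times$, and $T_k\bmod M$ is contained in a union of cosets of $k$-th powers indexed by the cyclotomic values $\Phi(2)\bmod M$.

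The main obstacle, and the delicate part of the argument, is precisely the count of how many distinct such cosets arise.  A priori there are $\exp(\Theta(\sqrt d))$ cyclotomic products of degree $\le d$ (by the partition function), and a naive union bound would swamp the gain $k^{-R}\le\exp(-c\sqrt{\tau d})$.  Overcoming this requires exploiting the multiplicative structure of cyclotomic values at $2$: since $\Phi_n(2)\mid 2^n-1$, by Bang/Zsygmondy the primes dividing $\Phi_n(2)$ are constrained, and by selecting the primes $p_i$ so that $\mathrm{ord}_{p_i}(2)$ is large the image $\{\Phi(2)\bmod M\}$ can be forced into a small subgroup of $(\Z/M\Z)^\times$.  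Alternatively one may restrict to the high-probability event that $P$ has no cyclotomic factor $\Phi_n$ with $\phi(n)$ above some threshold, using the local-limit bound $\P(\Phi_n\mid P)\lesssim d^{-\phi(n)/2}$ mentioned in the discussion following Theorem~\ref{th:GRH}, to control the range of relevant $\Phi$.  Once the number of cosets is brought below $\exp(o(\sqrt{\tau d}))$, summing over the polynomially many primes $k\ge 2$ yields the claimed bound $2\exp(-c(\tau d)^{1/2})$.
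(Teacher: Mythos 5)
Your overall skeleton --- reduce to the arithmetic of $P(2)$, exploit the equidistribution of $P(2)$ modulo primes $p\equiv 1\pmod k$ supplied by Proposition \ref{pr:a2}, and take a union bound over $\Phi$ and $k$ --- is the paper's, and your reductions to prime $k$, to $k\le d^{1/5}$-ish via Dobrowolski, and to non-constant non-cyclotomic $Q$ are all fine. But two of your steps do not go through as described. First, the single-modulus density argument is quantitatively too weak. Proposition \ref{pr:a2} requires $\log M\le C^{-1}(\tau d)^{1/2}$, while the support condition forces each $p_i>2\exp(d^{1/10})$; hence the number $R$ of prime factors of $M$ is at most $C^{-1}(\tau d)^{1/2}d^{-1/10}$, so the density gain $k^{-R}$ is at best $\exp(-c\sqrt{\tau}\,d^{2/5})$, short of the claimed $\exp(-c(\tau d)^{1/2})$. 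The paper's Lemma \ref{lm:powers} instead takes $|\cP|\ge\exp(c_0(\tau d)^{1/2}/4)$ \emph{separate} primes of size about $\exp(c_0(\tau d)^{1/2}/2)$ with $k\mid p-1$, notes that the events $E_p=\{P(2)\equiv R(2)a^k \bmod p \text{ for some } a\}$ each have probability roughly $1/k+1/p\le 2/3$ and are nearly \emph{pairwise} independent (Lemma \ref{lm:P2} applied to $q=p_1p_2$, so only $\log q\le c_0(\tau d)^{1/2}$ is needed), and concludes $\P(\bigcap_p E_p)\le 9/|\cP|$ by Chebyshev. Requiring only pairwise rather than joint equidistribution is what allows exponentially many primes and hence the full exponent $(\tau d)^{1/2}$.

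Second, and more seriously, neither of your proposed fixes for the count of cosets works. The Bang--Zsygmondy idea is not developed enough to force $\{\Phi(2)\bmod M\}$ into a small set. And restricting to the event that no $\Phi_n$ with $\phi(n)\ge N$ divides $P$ does not bound $\deg\Phi$ or the number of admissible $\Phi$: products of cyclotomics all of whose irreducible factors have degree below $N$ can still have degree up to $d$ (e.g. $(x-1)^d$), and there are still $\exp(\Theta(\sqrt d))$ such products; moreover the local-limit constant $C(\mu)$ in Lemma \ref{lm:cycl-div} is not absolute. The missing ingredient is to bound $\deg\Phi$ itself: by Lemma \ref{lm:indi1} a \emph{fixed} $\Phi$ divides $P$ with probability at most $\exp(-\tau\deg\Phi/2)$, while Boyd--Montgomery \cite{BM-cyclotomic} counts only $\exp(C_0 n^{1/2})$ cyclotomic products of degree $n$; hence with probability $1-\exp(-C_0 d^{1/2})$ one has $\deg\Phi\le 4C_0\tau^{-1}d^{1/2}$, which leaves only $\exp(O(\tau^{-1/2}d^{1/4}))$ candidates for $\Phi$, and the union bound over the pairs $(\Phi,k)$ then closes against the per-pair bound $\exp(-c(\tau d)^{1/2})$ from the previous paragraph.
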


In the next two lemmas we keep the assumptions of Proposition \ref{pr:powers}. The first is a reformulation of Proposition \ref{pr:a2}.

\begin{lemma}\label{lm:P2}
There is an absolute constant $c_0>0$ such that the following holds.
Let $q<\exp(c_0 (\tau d)^{1/2})$ be a product of distinct primes larger than $2\exp(d^{1/10})$.
Then for every $a\in\Z$, we have
\[
|\P_P[P(2)\equiv a \mod q]-q^{-1}|<q^{-10}.
\]
\end{lemma}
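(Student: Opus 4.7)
The plan is to recognize Lemma \ref{lm:P2} as a direct application of Proposition \ref{pr:a2} via the Chinese remainder theorem. Write $q=p_1\cdots p_M$ where the $p_i$ are the distinct prime factors of $q$. The CRT gives an additive group isomorphism $\Z/q\Z \cong V := \bigoplus_{i=1}^M \F_{p_i}$, which is exactly the group considered in Section \ref{sc:a2} (with $M$ copies, each $m_i=1$, and $\a_{i,1}=2$). Under this identification, the random variable $P(2)\bmod q$ corresponds to $\sum_{j=0}^d A_j\cdot \mathbf{2}^j\in V$, where $\mathbf{2}=(2,\ldots,2)$. This is precisely the random element whose law is $\nu_2^{(d)}$ in the notation of Section \ref{sc:a2}.

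Next, I would verify the three hypotheses of Proposition \ref{pr:a2}. The bound $\|\mu\|_2^2<1-\tau$ is assumed; the support condition $\supp\mu\subset(-p_i/2,p_i/2)$ follows because each $p_i>2\exp(d^{1/10})$ while the coefficients of $P$ are bounded by $\exp(d^{1/10})$; and the quantitative lower bound $d\ge \tau^{-1}(C\log q)^2$ follows from the hypothesis $\log q<c_0(\tau d)^{1/2}$, provided $c_0$ is chosen sufficiently small in terms of the absolute constant $C$ of Proposition \ref{pr:a2} (e.g.\ any $c_0\le 1/C$ works). Proposition \ref{pr:a2} then yields $|\nu_2^{(d)}(x)-q^{-1}|\le q^{-10}$ for every $x\in V$, which under the CRT identification is exactly the stated bound.

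The only mild wrinkle is that $A_0$ and $A_d$ need not have law $\mu$; however, Proposition \ref{pr:a2} is stated precisely for the setting of Section \ref{sc:gen-nota}, which allows the first and last steps $X_0,X_d$ to have arbitrary laws. (In terms of the proof, these endpoint steps contribute only a Fourier multiplier of modulus $\le 1$ to $\wh\nu_2^{(d)}$, so they cannot worsen the $L^\infty$ bound.) Thus no separate argument is needed for them.

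I do not expect any real obstacle: all the work has already been done in Proposition \ref{pr:a2} and in the Fourier estimate of Lemma \ref{lm:a2} that underlies it. The present lemma is essentially a translation from the group $V$ of Section \ref{sc:a2} to the cyclic group $\Z/q\Z$ via CRT, together with a choice of $c_0$ calibrating the bound on $q$ against the constant appearing in Proposition \ref{pr:a2}.
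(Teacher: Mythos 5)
Your proof is correct and is exactly what the paper intends: the authors simply remark that Lemma \ref{lm:P2} ``is a reformulation of Proposition \ref{pr:a2}'', and your CRT identification of $\Z/q\Z$ with $V=\bigoplus_i\F_{p_i}$, together with the verification of the support, $\|\mu\|_2$ and $d\ge\tau^{-1}(C\log q)^2$ hypotheses (and the observation that the endpoint coefficients are already accommodated by the framework of Section \ref{sc:gen-nota}), is precisely that reformulation.
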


\begin{lemma}\label{lm:powers}
Fix $R\in\Z[x]$,
and fix an integer $2\le k\le d^{1/5}$.
Then
\[
\P_P[P=RQ^k\text{ for some $Q\in\Z[x]$}]\le \exp(-c (\tau d)^{1/2}),
\]
where $c>0$ is an absolute constant.
\end{lemma}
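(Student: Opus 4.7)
The plan is to reduce the event $P=RQ^{k}$ to a condition on $P(2)$ and apply Lemma~\ref{lm:P2} together with a sieve argument exploiting the sparsity of $k$-th powers modulo primes $\equiv 1\pmod{k}$.

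First I would observe that if $P=RQ^{k}$, then $P(2)=R(2)Q(2)^{k}$, so $P(2)\in S:=\{R(2)m^{k}:m\in\Z,\ |R(2)m^{k}|\le\exp(2d)\}$, where the restriction comes from the crude bound $|P(2)|\le\exp(d^{1/10})\cdot 2^{d+1}\le\exp(2d)$ given by the coefficient assumption. Direct counting of $k$-th powers in an interval gives $|S|\le 2\exp(2d/k)+1$. I would dispose of the edge case $R(2)=0$ at once: it forces $P(2)=0$, and Lemma~\ref{lm:P2} with any valid $q$ bounds this probability by $q^{-1}+q^{-10}\le\exp(-c(\tau d)^{1/2})$.

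Next I would choose $q=p_{1}\cdots p_{m}$, a product of distinct primes with $p_{i}>2\exp(d^{1/10})$ (so Lemma~\ref{lm:P2} applies), $p_{i}\equiv 1\pmod{k}$ (so that $k$-th powers form a subgroup of index $k$ in $\F_{p_{i}}^{\times}$), and $\log q\le c_{0}(\tau d)^{1/2}$; Dirichlet's theorem on primes in arithmetic progressions supplies enough such primes to take $m$ of order $(\tau d)^{1/2}/d^{1/10}$. By the Chinese Remainder Theorem, $S\bmod q$ sits inside $R(2)$ times the set of $k$-th power residues in $\Z/q\Z$, whose size is at most $\prod_{i}((p_{i}-1)/k+1)\le 2q/k^{m}$ (using $p_{i}\ge 2k$, which holds since $p_{i}>2\exp(d^{1/10})\ge 2d^{1/5}\ge 2k$). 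Combining with Lemma~\ref{lm:P2}, I would then write
\[
\P[P(2)\in S]\le\frac{|S\bmod q|}{q}\bigl(1+q^{-9}\bigr)\le\min\!\left(\frac{|S|}{q},\ \frac{2}{k^{m}}\right)+O(q^{-9}).
\]
The first term $|S|/q$ handles $k\gtrsim\sqrt{d/\tau}$, where $|S|$ is small compared with $q$; the second handles moderate $k$ via exponential savings per prime.

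The hard part will be closing the bound uniformly across the full range $2\le k\le d^{1/5}$, especially for small $k$ like $k=2$, where each prime contributes only $\log k=O(1)$ to the exponent and the cap $m\lesssim(\tau d)^{1/2}/d^{1/10}$ makes the naive estimate $2/k^{m}$ give only $\exp(-c\tau^{1/2}d^{2/5})$ --- a factor of $d^{1/10}$ short of the desired $\exp(-c(\tau d)^{1/2})$. To recover the missing saving I would invoke the large sieve inequality (Gallagher--Montgomery style) applied to the random variable $P(2)$, interpolating between the global bound on $|S|$ and the modular sparsity to exploit the structure of the map $m\mapsto R(2)m^{k}\bmod q$ on the bounded interval $|m|\le\exp(d/k)$. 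This is precisely the ``large sieve together with the classical $a=2$ random walk'' announced in the outline, and constitutes the technical heart of the lemma.
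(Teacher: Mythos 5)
You set up the right reduction (pass to $P(2)$, handle $R(2)=0$ separately, use Lemma \ref{lm:P2} and the sparsity of $k$-th powers modulo primes $\equiv 1 \pmod k$), but the proof has a genuine gap exactly where you flag it: the "technical heart" is never carried out, and the route you sketch for closing it is not the one that works. Your single-modulus CRT bound is provably insufficient throughout the whole range $2\le k\le d^{1/5}$, not only for small $k$: the constraint $\log q\le c_0(\tau d)^{1/2}$ together with $\log p_i> d^{1/10}$ caps the number of prime factors at $m\lesssim \tau^{1/2}d^{2/5}$, so $k^{-m}\le\exp(-m\log k)$ is at best $\exp(-c\tau^{1/2}d^{2/5}\log d)$, short of $\exp(-c(\tau d)^{1/2})$ by a factor of about $d^{1/10}$ in the exponent; and the competing term $|S|/q$ only helps when $k\gtrsim (d/\tau)^{1/2}$, which never happens since $k\le d^{1/5}$. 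Moreover, "apply the large sieve inequality to the random variable $P(2)$" is not a step one can execute as stated: the Gallagher--Montgomery inequality bounds mean squares of exponential sums over an explicit integer sequence, and there is no such sequence here, only a probability law on residues that is close to uniform modulo each admissible $q$.

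What actually closes the gap (and is the sense in which the paper "uses the large sieve") is a second-moment argument over \emph{exponentially many separate primes} rather than a product of few primes inside one modulus. Take $\cP$ to be the set of primes $p\in[\tfrac12\exp(c_0(\tau d)^{1/2}/2),\exp(c_0(\tau d)^{1/2}/2)]$ with $p\nmid R(2)$ and $k\mid p-1$; by the prime number theorem in arithmetic progressions $|\cP|\ge\exp(c_0(\tau d)^{1/2}/4)$. Let $X_p$ be the indicator that $P(2)\equiv R(2)a^k\pmod p$ for some $a$; the event $P=RQ^k$ forces $X_p=1$ for all $p\in\cP$. Lemma \ref{lm:P2}, applied to $q=p_1$ and to $q=p_1p_2$ (both within the allowed range), gives $\E[X_p]=\frac{(p-1)/k+1}{p}+O(\exp(-9c_0(\tau d)^{1/2}/2))\le\tfrac23$ since $k\ge2$, and shows the $X_p$ are approximately pairwise independent, so $Y=\sum_{p\in\cP}X_p$ has $\E Y\le\tfrac23|\cP|$ and $\mathrm{Var}(Y)\le|\cP|$. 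Chebyshev then yields $\P(Y=|\cP|)\le 9/|\cP|\le\exp(-c(\tau d)^{1/2})$. The exponential saving comes from the cardinality of $\cP$ (the primes are themselves of size $\exp(c(\tau d)^{1/2})$, so there are exponentially many of them), not from multiplying per-prime densities --- which is precisely the mechanism your modulus-$q$ setup cannot access.
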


In the proof that follows, we will use the upper bound on $k$ in only one place, where
we apply the prime number theorem in arithmetic progressions.
It would be sufficient to impose a significantly milder upper bound on $k$, but
we will see that $P=RQ^{k}$ may hold with $k>d^{1/5}$ only if $Q$ is cyclotomic.

\begin{proof}[Proof of Lemma \ref{lm:powers}]
If $R(2)=0$ and $R$ divides $P$, then $P(2)=0$. Picking a prime $q$ in the interval $(\frac{1}{2}\exp(c_0 (\tau d)^{1/2})/2,\exp(c_0 (\tau d)^{1/2}))$, Lemma \ref{lm:P2} implies that
\[
\P_P[P(2)\equiv 0\mod q]<2/q.
\]
So we can safely assume in the rest of the proof that $R(2)\neq 0$.
We note also that $|R(2)|\le |P(2)|\le \exp(d^{1/10})2^{d+1}$.

We denote by $\cP$
the collection of primes
\[
p\in[\frac{1}{2}\exp(c_0 (\tau d)^{1/2}/2)/2,\exp(c_0 (\tau d)^{1/2}/2)]
\]
such that $p\nmid R(2)$
and $k|p-1$.
It follows from the prime number theorem in arithmetic progressions \cite{davenport}*{Chp. 20, (10)} that there are more than
\[
|\cP|\ge \exp(c_0 (\tau d)^{1/2}/4)
\]
such primes if $d$ is sufficiently large (i.e. $\tau d$ larger than an effective constant: we are counting primes between $x/2$ and $x$ that are congruent to $1$ modulo $k$ with $k$ allowed to take any value $\ll (\log x)^{2/5}$ say).

For each $p\in\cP$ we denote by $X_p$ the random variable that is equal to $1$ if
\[
P(2)\equiv R(2)a^k\mod p
\]
for some $a\in\Z/p\Z$ and that is equal to $0$ otherwise.
If $P=RQ^k$ for some $Q\in\Z[x]$, then clearly $X_p=1$ for all $p\in\cP$.

It follows from Lemma \ref{lm:P2} applied first to $q=p_1$ and then to $q=p_1p_2$ that
\begin{align*}
\E_P[X_{p_1}]=\E_P[X_{p_1}^2]=\frac{(p_1-1)/k+1}{p_1}+O(\exp(-9c_0(\tau d)^{1/2}/2))\\
\E_P[X_{p_1}X_{p_2}]=\frac{(p_1-1)/k+1}{p_1}\cdot\frac{(p_2-1)/k+1}{p_2}+O(\exp(-9c_0 (\tau d)^{1/2}/2))
\end{align*}
for any $p_1\neq p_2\in\cP$.
Therefore, writing $Y=\sum_{p\in\cP} X_p$, since $k \ge 2$, 
\[
\E_P Y\le\frac{2}{3}|\cP|
\]
and the variance $\mathbb{V}ar(Y)= \E_P Y^2 - (\E_P Y)^2$ is bounded by
\begin{align*}
\mathbb{V}ar(Y)
=&\sum_{p\in\cP}\Big(\frac{(p-1)/k+1}{p}-\Big(\frac{(p-1)/k+1}{p}\Big)^2\Big)\\
&+O(|\cP|^2\exp(-9c_0 (\tau d)^{1/2}/2))\\
\leq& \frac{2}{3}|\cP| +1 <|\cP|
\end{align*}
provided $d$ is sufficiently large.
We conclude from Chebyshev's inequality that
\[
\P_P(Y=|\cP|)\leq \P_P(Y-\E_P Y \ge \frac{1}{3}|\cP|) \leq \mathbb{V}ar(Y) \big(\frac{3}{|\cP|}\big)^2  < \frac{9}{|\cP|},
\]
which proves the lemma.
\end{proof}

\begin{proof}[Proof of Proposition \ref{pr:powers}]
Boyd and Montgomery \cite{BM-cyclotomic} gave an asymptotic formula for the number of polynomials $\Phi$
in $\Z[x]$ of degree $n$ that are the product of their cyclotomic factors.
In particular, they proved that there are at most $\exp(C_0n^{1/2})$ such polynomials,
where $C_0$ is an absolute constant ($C_0=4$ works for large enough $n$).

For a fixed $\Phi$ we may apply Lemma \ref{lm:indi1} and conclude that the probability that $\Phi$ divides $P$ is at most
$\exp(- \tau \deg(\Phi)/2)$. Therefore, the probability that $P=\Phi Q^k$ for some $\Phi$ with
\[
\deg\Phi\ge 4\frac{C_0}{\tau}d^{1/2}
\]
is at most $\exp(-C_0 d^{1/2})$.

We consider now the probability that $P=\Phi Q^k$ with $\Phi$ having smaller degree.
We can assume that $Q$ is not a product of cyclotomic factors, otherwise it can be absorbed into $\Phi$
and it is covered by the previous case.
We note that if $P=\Phi Q^k$, then by \eqref{measure-bounds}
\[
M(Q)=M(P)^{1/k}\le\exp(d^{1/10}/k)(d+1)^{1/2k}.
\]
Since $Q$ is not a product of cyclotomic factors, this implies that $k\le d^{1/5}$ (say) by Dobrowolski's bound \eqref{measure-bounds}.

Again by \cite{BM-cyclotomic} the number of polynomials in the role of $\Phi$ that are not covered by the previous case is at most
\[
\exp\big(2C_0(C_0/\tau)^{1/2}d^{1/4}\big).
\]
Now we can use Lemma \ref{lm:powers} to estimate the probability of $P=\Phi Q^k$ for individual
choices of $\Phi$ and $k$ and conclude the proof.
\end{proof}

\section{Proof of the main results}\label{sc:proofs}

We first give a simple lemma that allows us to decide when a permutation group is $m$-transitive. Recall that the Bell number $B_m$ is the number of equivalence relations on a set with $m$ elements.

\begin{lemma}\label{lm:perm}
Let $G$ be a permutation group acting on a set $\Omega$ and let $m\in\Z_{>0}$.
Suppose $|\Omega|\ge m$. The number $|\Omega^m/G|$ of orbits of $G$ acting diagonally on $\Omega^m$ satisfies 
\[
|\Omega^m/G| \ge B_m
\]
with equality if and only if the action of $G$ on $\Omega$ is $m$-transitive.
\end{lemma}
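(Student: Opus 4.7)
The plan is to exhibit a natural surjection from $\Omega^m/G$ onto the set $\cE_m$ of equivalence relations on $\{1,\ldots,m\}$, and then to show that this map is a bijection exactly when $G$ acts $m$-transitively on $\Omega$.

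First I would associate to each $\omega=(\omega_1,\ldots,\omega_m)\in\Omega^m$ the equivalence relation $\e(\omega)\in\cE_m$ defined by $i\sim_{\e(\omega)} j$ iff $\omega_i=\omega_j$. Since every $g\in G$ acts as a bijection on $\Omega$, we have $\omega_i=\omega_j$ iff $g\omega_i=g\omega_j$, so $\e$ is constant on each $G$-orbit and descends to a well-defined map $\bar\e:\Omega^m/G\to\cE_m$. The hypothesis $|\Omega|\ge m$ guarantees surjectivity: given any $\e\in\cE_m$ with $k$ equivalence classes, pick $k$ pairwise distinct elements of $\Omega$ and assign them to indices according to $\e$, producing a tuple with $\e(\omega)=\e$. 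Since $|\cE_m|=B_m$, surjectivity yields $|\Omega^m/G|\ge B_m$.

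Equality holds iff $\bar\e$ is injective, i.e.\ iff any two tuples $\omega,\omega'\in\Omega^m$ with $\e(\omega)=\e(\omega')$ lie in a common $G$-orbit. The ``only if'' direction is immediate from the singleton partition: if $|\Omega^m/G|=B_m$ then in particular the tuples of pairwise distinct elements form a single orbit, which is precisely $m$-transitivity.

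For the ``if'' direction, I would argue that $m$-transitivity implies $k$-transitivity for every $k\le m$ (using $|\Omega|\ge m$ to extend a $k$-tuple of distinct elements to an $m$-tuple of distinct elements). Now suppose $\e(\omega)=\e(\omega')=\e$ has $k$ classes, and pick one index $i_s$ in each class ($s=1,\ldots,k$). The $k$-tuples $(\omega_{i_1},\ldots,\omega_{i_k})$ and $(\omega'_{i_1},\ldots,\omega'_{i_k})$ both consist of pairwise distinct elements of $\Omega$, so by $k$-transitivity there is $g\in G$ with $g\omega_{i_s}=\omega'_{i_s}$ for each $s$. For any other index $j$, let $s$ be such that $j\sim_\e i_s$; then $\omega_j=\omega_{i_s}$ and $\omega'_j=\omega'_{i_s}$, so $g\omega_j=\omega'_j$. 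Hence $\omega$ and $\omega'$ lie in the same orbit, proving injectivity of $\bar\e$ and completing the lemma. There is no serious obstacle here; the argument is entirely combinatorial and the only subtlety is the (standard) deduction of $k$-transitivity from $m$-transitivity, which uses the assumption $|\Omega|\ge m$.
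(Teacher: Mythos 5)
Your proof is correct and follows essentially the same route as the paper: your map $\bar\e$ to equivalence relations is exactly the identification of $\Sym(\Omega)$-orbits on $\Omega^m$ with $\cE_m$ that the paper uses, and both arguments derive the lower bound from this identification and characterize equality via the orbit of distinct tuples. The only difference is that you spell out the reduction to $k$-transitivity on class representatives, which the paper leaves implicit.
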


\begin{proof}
If $G$ is $m$-transitive, then its orbits on $\Omega^m$ are in one-to-one
correspondence with equivalence relations on the set of coordinates.
Given an equivalence relation on the $m$ coordinates, the corresponding orbit is the set of tuples in $\Omega^m$ whose coordinates are equal if and only if they are related by the equivalence relation.
Since $|\Omega|\ge m$, all equivalence relations can occur. Hence $|\Omega^m/G|=B_m$.

Now in the general case $G \le \Sym(\Omega)$, so each orbit of $G$ is contained in an orbit of $\Sym(\Omega)$. Thus  $|\Omega^m/G| \ge |\Omega^m/\Sym(\Omega)| = B_m$. 

If $G$ is not $m$-transitive, then the orbit of the full symmetric group $\Sym(\Omega)$ consisting of tuples with distinct coordinates
splits into multiple orbits of $G$, hence $|\Omega^m/G|>B_m$.
\end{proof}

\subsection{Proof of Theorem \ref{th:GRH}}\label{sc:proofGRH}

We set $\kappa=\tau/100$, $m=1$ and let $X>10$. Recall that we denote by $\wt P$ the product of the $(X,\kappa)$-admissible irreducible factors of $P$ and that $\Omega$ is the set of complex roots of $\wt P$ (see Definition \ref{admissibility}). We aim to show that the Galois group $G$ of the splitting field of $\wt P$ acts transitively on $\Omega$
with high probability.

Recall that $h_X$ is the function $h_X(u)=2e^{-X}1_{(X-\log 2, X]}(u)$. It follows from the prime number theorem that
\[
w:=\sum_{p}\log(p) h_X(\log p)\to 1
\]
as $X\to \infty$. We apply Corollary \ref{cor:exp-root-2} for $g=h_X$ with $m=1$. It applies if $X$ is large enough and we conclude that
\begin{equation}\label{eq:good-poly}
|\sum_{p}B_P(p)\log (p)h_X(\log p) - 1| \leq \frac{2}{3}
\end{equation}
holds for any $X\in[100d^{1/10},\frac{\tau^2}{100C_0}(\log d)^{-3}d]$ with probability at least
\[
1-\exp(-\frac{X}{6}) - \exp(-\frac{c_0\tau^2 d}{100X(\log(d))^{2}})
\]
provided $d>100/\tau$ say. Taking $X=\tau (c_0 d/100)^{1/2}/\log d$ (which is allowed provided $d\tau^4$
is sufficiently large) this bound becomes $\ge 1-2\exp(-X/6)$. We now  assume that
\eqref{eq:good-poly} holds for $P$, and $\zeta_K$ satisfies RH for all $K=\Q(a)$ for any root $a$ of $P$.
By Proposition \ref{pr:Bp-sum-GRH}, we then have
\[
\sum_{p} B_P(p)\log(p)h_X(\log p )=|\Omega/G|+O(\exp(-X/10)).
\]
If $d$ is sufficiently large, we can conclude that
\[
|1-|\Omega/G||<1
\]
under the above assumptions on $P$. We therefore  conclude that  $|\Omega/G|=1$, and hence $G$ acts transitively on $\Omega$, i.e. $\wt P$ is irreducible.

By Proposition \ref{pr:small-Mahler}, with probability at least $1-2\exp(-c \tau d^{4/5})$,
any exceptional factor of $P$ is cyclotomic.
If that holds in addition to the hypothesis we have already made, then $P=\Phi\wt P^k$,
where $\Phi\in\Z[x]$ is a product of
a power of $x$ and cyclotomic polynomials, and $k\in\Z_{>0}$.

By Proposition \ref{pr:powers}, we know that $k=1$ with probability at least $1-2\exp(-c(\tau d)^{1/2})$. Furthermore, the probability that $\deg(\Phi) \ge \frac{C}{\tau} \sqrt{d}$ is at most $\exp(-C d^{1/2}/4)$, because this is true for any given polynomial $\Phi$ by Lemma \ref{lm:indi1} and, as recalled in the proof of Proposition  \ref{pr:powers}, there are at most $\exp(C d^{1/2}/4))$ such poynomials for some absolute constant $C>0$.
This establishes part $(1)$ of Theorem \ref{th:GRH}. 

The proof of part $(2)$ is similar, but we need to also consider moments of $B_P(p)$ of order $m>1$ in order to show that $|\Omega^m/G|=B_m$ and hence conclude, by  Lemma \ref{lm:perm}, that $G$ acts $m$-transitively on $\Omega$. An old fact, going back to  Bochert and Jordan \cite{jordan1895} in the 19-th century, asserts that every degree $d$ permutation group that is at least $(30\log d)^2$-transitive must contain the alternating group $\Alt(d)$. A simple proof of a slightly better bound can be found in \cite{babai-seress} (see also \cite{dixon-mortimer}*{Theorem 5.5.B} where Wielandt's stronger bound $6 \log d$ is proved). Using the classification of finite simple groups it is now known that there is a bound independent of $d$ and indeed every $6$-transitive group contains $\Alt(d)$ (see  \cite{cameron}*{Corollary 5.4}). But we choose not to rely on the classification, since, at the expense of loosing a $\log(d)$ factor in the probability of exceptions, we can avoid it. In fact if instead we use Wielandt's bound (whose proof is more involved) we can get the slightly better bound $\exp(-c \tau d^{1/2}/(\log d)^{3/2})$ in  (2) of Theorem \ref{th:GRH}.

So let $m\ge1$, $\kappa=\tau/100$ and $X>10$ and consider $\wt P$ the product of the irreducible $(\frac{\kappa}{m},mX)$-admissible factors of $P$ and as earlier $B_P(p)$ the number of  $(\frac{\kappa}{m},mX)$-admissible roots of $P$ in $\F_p$. 

By Corollary \ref{cor:exp-root-2} applied to $g=h_X$ we get that 
\begin{equation}\label{m-bd}|\sum_p B_P(p)^m\log(p)h_X(\log p) - B_m w|<\frac{1}{2}\end{equation}
with probability at least
\[
1-\exp(-\frac{X}{6}) - \exp(-\frac{c_0\tau^2 d}{100mX(\log(md))^{2}})
\]
provided $X$ is in the  interval allowed by \eqref{condiX}.  We now set $m=\lceil 30(\log d)^2 \rceil$ and $X^2=c_0\tau^2/100 \cdot d/(m (\log(md))^2)$. Note then that when $\tau^4 d$ is large enough $X$ is in the allowed interval and that \eqref{m-bd} holds with probability at least $1-2\exp(-X/6)$. Assume now that \eqref{m-bd}  holds for $P$ and that $\zeta_K$ satisfies RH for all $K=\Q(a_1,\ldots,a_m)$ for any choice of $m$ roots of $P$. By Proposition \ref{pr:Bp-sum-GRH} we then get 
\[
\sum_{p} B_P(p)^m\log(p)h_X(\log p )=|\Omega^m/G|+O(\exp(-X/10)).
\]
If $d$ is large enough $|w-1|= O(X^2\exp(-X/2))$ by Proposition \ref{pr:prime-sum-GRH}  (assuming RH for $\zeta_\Q$). Since $B_m \leq 2^{m^2} \le \exp(X/100)$ this implies that 
\[
|B_m - |\Omega^m/G||\leq \frac{1}{2} + B_m|1-w| + O(\exp(-X/10)) <1
\]
as soon as $d$ is large enough, and hence that $|\Omega^m/G|=B_m$. So by Lemma \ref{lm:perm} $G$ acts $m$-transitively on $\Omega$ and by the 19-th century transitivity bound recalled earlier, since $\deg \wt P \leq d$, $G$ contains the alternating group $\Alt(\deg \wt P)$. Finally as in part (1), except for a small set of exceptions $P=\Phi \wt P$, and this completes the proof of the theorem.

\subsection{Proof of Corollaries  \ref{cr:00} and \ref{cr:01}}

The following lemma is implicitly contained in \cite{Kon-01}*{pp. 345}
\begin{lemma}\label{lm:cycl-div}
Let $\omega_n$ be the $n$-th cyclotomic polynomial of degree $\f(n)$.
Then for all $n,d$,
\[
\P[\omega_n|P_d]\le \big(C(\mu)\frac{n}{d}\big)^{\f(n)/2},
\]
where $C(\mu)>0$ depends only on $\mu$.
\end{lemma}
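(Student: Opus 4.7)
My plan is to reduce $\omega_n \mid P_d$ to $\varphi(n)$ simultaneous point-mass conditions on independent sums of the coefficients, and then apply a standard concentration estimate one coordinate at a time. Let $\zeta$ be a primitive $n$-th root of unity; then $\omega_n \mid P_d$ in $\Z[x]$ if and only if $P_d(\zeta) = 0$ in $\Z[\zeta]$. Grouping the coefficients of $P_d$ by their residue class modulo $n$, I would write
\[
P_d(\zeta) \;=\; \sum_{r=0}^{n-1} S_r\, \zeta^r, \qquad S_r := \sum_{\substack{0 \le j \le d \\ j \equiv r \pmod n}} A_j.
\]
Since the $A_j$ are i.i.d.\ with common law $\mu$ and the $n$ residue classes partition $\{0,\ldots,d\}$, the variables $S_0, \ldots, S_{n-1}$ are mutually independent, and each $S_r$ is a sum of $N_r = \lfloor (d-r)/n\rfloor + 1$ i.i.d.\ copies of $\mu$. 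I may assume $d \geq 2n$, as otherwise the asserted bound is trivial after taking $C(\mu)$ large enough; in this regime $N_r \asymp d/n$ uniformly in $r$.

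Next I would exploit the integral basis $1, \zeta, \ldots, \zeta^{\varphi(n)-1}$ of $\Z[\zeta] = \Z[x]/(\omega_n)$: for each $r$ with $\varphi(n) \le r \le n-1$, reduction modulo $\omega_n$ gives integers $m_{rk}$ such that $\zeta^r = \sum_{k=0}^{\varphi(n)-1} m_{rk}\, \zeta^k$. Substituting and equating coordinates in the basis, the single equation $P_d(\zeta) = 0$ becomes the linear system
\[
S_k \;=\; -\sum_{r=\varphi(n)}^{n-1} m_{rk}\, S_r, \qquad k = 0, 1, \ldots, \varphi(n) - 1.
\]
Conditioning on the values of $S_{\varphi(n)}, \ldots, S_{n-1}$, the right-hand sides become prescribed integers, while the remaining $\varphi(n)$ sums $S_0, \ldots, S_{\varphi(n)-1}$ remain mutually independent. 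By the classical concentration-function estimate of Kolmogorov--Rogozin/Kesten for sums of i.i.d.\ non-degenerate integer-valued random variables, $\sup_{x \in \Z} \P[S_k = x] \le C(\mu)/\sqrt{N_k} \le C(\mu)\sqrt{n/d}$ for each $k < \varphi(n)$. Multiplying these $\varphi(n)$ independent conditional point probabilities and integrating out the conditioning gives
\[
\P[\omega_n \mid P_d] \;\le\; \bigl(C(\mu)\sqrt{n/d}\bigr)^{\varphi(n)} \;=\; \bigl(C(\mu)^2\, n/d\bigr)^{\varphi(n)/2},
\]
which is the claimed inequality (up to relabeling $C(\mu)$).

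The only substantial input is the uniform point-probability bound $\sup_x \P[S_k = x] \le C(\mu) N_k^{-1/2}$, which is where the non-degeneracy of $\mu$ enters and which alone controls the constant $C(\mu)$; I expect no real obstacle here, as this is a standard local-limit-type estimate. The structural reason the exponent comes out to $\varphi(n)/2$ is precisely that $1, \zeta, \ldots, \zeta^{\varphi(n)-1}$ is an \emph{integral} basis of $\Z[\zeta]$, so after conditioning one genuinely needs to hit $\varphi(n)$ prescribed integer values (one per basis coordinate), and the matrix expressing $S_0, \ldots, S_{\varphi(n)-1}$ in terms of the remaining $S_r$'s is $\Z$-unimodular by construction --- no non-trivial divisibility loss is incurred.
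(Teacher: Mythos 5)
Your proof is correct and is essentially the paper's argument: the paper forms the same residue-class sums $B_j$ (your $S_r$), notes that $\omega_n\mid P_d$ forces $\omega_n\mid Q_d$ where $Q_d(x)=\sum_{j<n}B_jx^j$, and applies its Lemma~\ref{lm:indi1} to $Q_d$ and $\omega_n$ --- which is exactly your step of conditioning on $S_{\varphi(n)},\ldots,S_{n-1}$ and observing that $S_0,\ldots,S_{\varphi(n)-1}$ must then hit $\varphi(n)$ prescribed integers --- before closing with the local limit theorem where you invoke Kolmogorov--Rogozin/Kesten (both deliver the $C(\mu)\sqrt{n/d}$ bound). The only cosmetic differences are that the paper reduces modulo $\omega_n$ directly in $\Q[x]$ rather than passing through the integral basis of $\Z[\zeta]$, so your closing ``$\Z$-unimodular'' remark (strictly speaking not well-posed, since the matrix $(m_{rk})$ is rectangular; what matters is only that its entries are integers) is an unnecessary flourish rather than a load-bearing step.
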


\begin{proof}Write $Q_d=\sum_{j=0}^{n-1} B_j x^j$, where $B_j:=\sum_{i \equiv j \mod n, 0\le i\le d} A_i$. Note that if $\omega_n |P_d$, then $\omega_n|Q_d$, and hence Lemma \ref{lm:indi1} implies that 
\[
\P[\omega_n|P_d]\le  \prod_{0}^{\f(n)-1} \|\mu_j\|_{\infty},
\]
where $\mu_j$ is the law of $B_j$, which is the sum of roughly $\lfloor d/n \rfloor$ i.i.d. variables with common law $\mu$. Since $\mu$ has a finite second moment, there is a constant $C(\mu)>0$ such that we have $\|\mu_j\|_\infty \leq (C(\mu) n/d)^{1/2}$, as follows say from the local limit theorem. The claim follows.
\end{proof}

We apply this lemma for different ranges of $n$. If $N\le \f(n)\le  100N$, then $n$ is bounded in terms of $N$ and
\[
\P[\omega_n|P]\le (C(\mu) n/d)^{N/2} = O_{N,\mu}(d^{-N/2})
\]
If $100N< \f(n) \le d^{1/2}$, then $n\leq C d^{1/2}\log \log d$ for some absolute constant $c>0$ and 
\[
\P[\omega_n|P]\le (C(\mu) c\log \log d/d^{1/2})^{50N}
\]
If $d^{1/2} \le \f(n) \le d$, then
\[
\P[\omega_n|P]\le \|\mu\|_\infty^{d^{1/2}/2}
\]
by Lemma \ref{lm:indi1}. Summing over all such $n$'s we get:
\[
\P[\omega_n|P \textnormal{ for some }n\textnormal{ with }\f(n)\ge N] = O_{\mu,N}(d^{-N/2}).
\]

In order to apply Theorem \ref{th:GRH}, we need to truncate the coefficients. But
\[
\P[ \max_{0\le i\le d} |A_i|>e^{d^{1/10}}] \leq (d+1) \P[|A_0|>e^{d^{1/10}}] \leq (d+1) e^{-2d^{1/10}}\E[|A_0|^2] 
\]
by Chebyshev's inequality. The proof of Corollary \ref{cr:00} now follows by combining the above inequalities with Theorem \ref{th:GRH}.

To get Corollary \ref{cr:01} take $N=2$ and observe that the law of $P$ in the statement is designed to make sure that $x\nmid P$ always. Note also that Lemma \ref{lm:cycl-div} still holds even though $A_0$ and $A_d$ are not distributed like the other $A_i$'s, so the above estimates continue to hold. Since $P$ has non-negative coefficients and at least two positive ones $P(1)>0$. So it is only left to estimate $\P[\omega_2|P]=\P[P(-1)=0]$. Looking at $P(-1)$ yields a random walk on $\Z$ and it is therefore a simple matter to verify that  $\P[P(-1)=0]=\sqrt{\frac{2}{\pi d}}+O(d^{-1})$, as desired.

\subsection{Proof of Theorem \ref{th:mild-hyp1}}

The proof is identical to that of part $(1)$ of Theorem \ref{th:GRH}, except that we take $X=d(\log d)^{-\b}$
and apply Proposition \ref{pr:Bp-sum} instead of Proposition \ref{pr:Bp-sum-GRH}.
We note that the exceptional zeros are not present by the assumptions of the theorem, so
the right hand side of the displayed formula in Proposition \ref{pr:Bp-sum} becomes
\[
|\Omega/G|+O(\exp(-c(\log d)^{\a-\b})).
\]

\subsection{Proof of Theorem \ref{th:mild-hyp2}}

Set $\b:=\a-\g$. As in the proof of Theorem \ref{th:GRH}, we set $\kappa=\tau/10$ for the admissibility parameter (see Def. \ref{admissibility}). By Proposition \ref{pr:small-Mahler} with probability at least $1-2\exp(-c_\tau d^{4/5})$ every non-cyclotomic irreducible factor of $P$ has Mahler measure at least $\exp(\kappa)$. We may thus assume that $P$ has this property, and let $\wt P$ be the product of the non-cyclotomic irreducible factors of $P$. As before $\Omega$ is the set of roots of $\wt P$ and $G$ the Galois group of the splitting field of $P$.

We use Proposition \ref{pr:exp-root-2} with $m=1$,  $X=X_1=2d(\log d)^{-\b}$
and $X=X_2=d(\log d)^{-\b}$ for the functions $g=g_{X_1,k}$ and $g=g_{X_2,k}$, respectively, where
$k=\lfloor(\log d)^{\a-\b}/10\rfloor$.
We can conclude that 
\[
\E_P(Z_i^2)=O(\exp(-c(\log d)^{\b-2}))
\]

holds for each $i=1,2$, where 
$$Z_i:=\sum_{p} B^i_P(p)\log(p)g_{X_i,k}(\log p)-w_i,$$ $w_i:=\sum_p\log (p) g_{X_i,k}(\log p)$
and $B^i_P(p)$ is the set of $(X_i,\kappa)$-admissible roots of $P$ in $\F_p$. Hence by Chebychev's inequality $\P_P(|Z_i|>t) \leq t^2 \E_P(Z_i^2)$, we obtain that  with probability at least $1-2\exp(-c(\log d)^{\b-2})$
\begin{equation}\label{eq:X1-sum}|Z_i|=O(\exp(-c(\log d)^{\b-2}))\end{equation}
holds for each $i=1,2$.

We note that $|1-w_i|<C\exp(-c(\log d)^{\a-\b})$ as can be seen for example from Proposition \ref{pr:prime-sum}
applied for $K=\Q$, since the Riemann zeta function $\zeta_\Q$ has no zeros in a sufficiently small neighborhood of $1$.
(Significantly better bounds can be obtained by the proof of Proposition \ref{pr:prime-sum}, but this is not needed.)

Now we assume that $P$ satisfies \eqref{eq:X1-sum} for both $i=1,2$. 
We apply
Proposition \ref{pr:Bp-sum} and obtain for $i=1,2$
\begin{equation}
Z_i+w_i=\sum_{O \in \Omega/G}(1-G_{X_i,k}(\rho_{K_O,0}))+O(\exp(-c(\log d)^{\a-\b}))\label{eq:X1-sum-2}.
\end{equation}

Now we combine the above estimates 
 and $|w_1-w_2|<C\exp(-c(\log d)^{\a-\b})$ to get
\[
\sum_{O \in \Omega/G}(G_{X_2,k}(\rho_{K_O,0})-G_{X_1,k}(\rho_{K_O,0}))\le C(\exp(-c(\log d)^{\a-\b})+\exp(-c(\log d)^{\b-2})).
\]
We note that
\begin{align*}
(G_{X_2,k}(\rho_{K_O,0})-G_{X_1,k}(\rho_{K_O,0}))
=&\Big(1-\frac{G_{X_1,k}(\rho_{K,0})}{G_{X_2,k}(\rho_{K_O,0})}\Big) G_{X_2,k}(\rho_{K_O,0})\\
\ge& (1-\exp(-(1-\rho_{K_O,0})(X_1-X_2)/4))G_{X_2,k}(\rho_{K_O,0})\\
\ge& c\exp(-c_0(\log d)^\gamma)d(\log d)^{-\b}G_{X_2,k}(\rho_{K_O,0}).
\end{align*}
Here we used the bound on $G_{X_1,k}/G_{X_2,k}$ from Lemma \ref{lm:weights} and then the assumption on
the exceptional zeros from the theorem, and the constant $c_0$ is the constant $c$ in that bound.
Therefore, we can conclude that
\[
\sum_{O \in \Omega/G}G_{X_2,k}(\rho_{K_O,0})\le C(\exp(-c(\log d)^{\a-\b})+\exp(-c(\log d)^{\b-2}))
\]
if we choose $c_0$ sufficiently small, since $\a-\b, \b-2\ge \g$ and $\g>1$.

We combine the last estimate with \eqref{eq:X1-sum}  and \eqref{eq:X1-sum-2} and we can write
\[
|\Omega/G|=1+O(\exp(-c(\log d)^{\a-\b})+\exp(-c(\log d)^{\b-2})),
\]
hence $|\Omega/G|=1$ as it is an integer. Therefore $\wt P$ is irreducible. Now we can finish the proof by applying Proposition \ref{pr:powers}.

\subsection{Proof of Theorem \ref{th:exp-zero}}

We pick a number $\a'\in(\b,\a)$.
We use Proposition \ref{pr:exp-root-2} with $m=1$,
$g=g_{X,k}$, where $X=d(\log d)^{-\b}$ and
$k=\lfloor(\log d)^{\a'-\b}/10\rfloor$.
After applying Chebychev's inequality as in the proof of Theorem \ref{th:mild-hyp2}, we get that
$$\sum_{p} B_P(p)\log(p)g_{X,k}(\log p)=w+O(\exp(-c(\log d)^{\b-2})),$$
holds with probability at least $1-C\exp(-c(\log d)^{\b-2})$,
where
\[
w=\sum_{p}\log(p)g_{X,k}(\log p).
\]

Moreover, as before, using Proposition \ref{pr:prime-sum} for the field of rational numbers we see that  $|w-1| = O(\exp(-c(\log d)^{\a - \b}))$. Hence
\begin{equation}\label{eq:BP-sum-exp-zero}
\sum_{p} B_P(p)\log(p)g_{X,k}(\log p)=1+O(\exp(-c(\log d)^{\min\{\b-2,\a-\b\}}))
\end{equation}
with probability at least $1-C\exp(-c(\log d)^{\b-2})$.

According to the Deuring-Heilbronn phenomenon if the Dedekind zeta function $\zeta_K$ of a number field $K$ has a real zero very close to $1$, then it cannot have other zeros nearby $1$. More precisely (see \cite{LMO-Chebotarev}*{Theorem 5.1}) there is a positive, absolute, effectively computable constant $c_0>0$ such that for every number field $K$ if $\zeta_K$ has a real zero $\rho_{K,0}$, then every other zero $\rho$ satisfies:
\[
|1-\rho| \ge \frac{c_0}{ \log(2^d\Delta_K)} \log\big( \frac{c_0}{|1-\rho_{K,0}| \log(2^d\Delta_K)}\big).
\]

So assume, by contradiction, that  $\zeta_K$ has a zero $\rho_{K,0}$
with $|1-\rho_{K,0}|<\exp(-(\log d)^{\a+1})$ for each  $K=\Q(a)$ for each non-zero complex root $a$ of $P$, which is not a root of unity (this is void and hence always holds if $P$ is a product of cyclotomic polynomials or factors of the type $x^m$). Note then that $|1-\rho_{K,0}|<1/(4\log |\Delta_K|)$ (because by Lemma \ref{lm:discriminant} $|\Delta_K|\leq d^{(1+\tau^{-1})2d}$) and hence by \cite{Sta-Dedekind}*{Lemma 3} $\rho_{K,0}$ must be real and is the unique Siegel zero of $\zeta_K$. Thus every other zero $\rho$ satisfies 
\[
|1-\rho|\ge  \frac{c}{d \log d} \log\big( \frac{c \exp( (\log d)^{\a +1})}{d \log d}\big)
\ge \frac{(\log d)^{\a'}}{d}
\]
provided $d$ is sufficiently large.

So we can apply Proposition \ref{pr:Bp-sum} and, using Lemma \ref{lm:weights}, write
\begin{align*}
\sum_{p} B_P(p)\log(p)g_{X,k}(\log p)=&\sum_{O \in \Omega/G}(1-G_{X,k}(\rho_{K_O,0}))+O(\exp(-c(\log d)^{\a'-\b}))\\
\le&|\Omega/G|X\exp(-(\log d)^{\a+1})+C\exp(-c(\log d)^{\a'-\b})\\
\le& d^2\exp(-(\log d)^{\a+1})+C\exp(-c(\log d)^{\a'-\b}).
\end{align*}
But this is incompatible with \eqref{eq:BP-sum-exp-zero}.

\bibliographystyle{abbrv}
\bibliography{bibfile}

\end{document}